\documentclass[11pt,reqno,centertags,a4paper,noamsfonts]{amsart}
\usepackage[margin=25.4mm]{geometry}

\usepackage[latin2]{inputenc}
\usepackage{amsmath}
\usepackage{amssymb}
\usepackage{amsthm}
\usepackage{graphicx}
\usepackage{tikz}
\usepackage{esint}
\usepackage{xcolor}
\usepackage{xxcolor}
\usepackage{floatrow}
\usepackage{color}
\usepackage{epsfig}
\usepackage[english]{babel}
\usepackage{hyperref}

\usepackage{comment}
\usepackage{mathtools}
\mathtoolsset{showonlyrefs}
\usepackage{enumitem}
\usepackage{csquotes}
\usepackage{marvosym}

\makeatletter
\newcommand{\leqnomode}{\tagsleft@true}
\newcommand{\reqnomode}{\tagsleft@false}
\makeatother

\hyphenation{thermo-diffusion}

\newtheorem{theorem}{Theorem}[section]
\newtheorem{assumption}[theorem]{Hypotheses}
\newtheorem*{genAssumption}{General Hypotheses}
\newtheorem{remark}[theorem]{Remark}
\newtheorem{proposition}[theorem]{Proposition}
\newtheorem{lemma}[theorem]{Lemma}

\newtheorem{definition}[theorem]{Definition}

\def\XXint#1#2#3{{\setbox0=\hbox{$#1{#2#3}{\int}$ }
		\vcenter{\hbox{$#2#3$ }}\kern-.6\wd0}}

\allowdisplaybreaks[2]

\newcommand{\supp}{\operatorname{supp}} 
\newcommand{\divv}{\operatorname{div}}

\newcommand{\diag}{\operatorname{diag}}

\newcommand{\dist}{\operatorname{dist}}

\newcommand{\dd}{\mathrm{d}}
\newcommand{\loc}{\mathrm{loc}}
\newcommand{\ve}{\varepsilon}
\newcommand{\HH}{\hyperlink{H}{\bf(H)}}
\newcommand{\HHp}{\hyperlink{H'}{\bf(H')}}
\newcommand{\ga}{\hyperlink{ga}{\textup{General Hypotheses}}}

\newcommand{\ol}{\overline}

\newcommand{\wt}{\widetilde}
\newcommand{\coleq}{\mathrel{\mathop:}=}

\allowdisplaybreaks[2]

\newcommand{\Om}{\Omega}

\newcommand{\mquad}{\mkern-18mu}
\newcommand{\mqquad}{\mkern-36mu}

\newcommand{\de}{{\delta,\varepsilon}}
\renewcommand{\de}{\varepsilon}
\newcommand{\dee}{{\varepsilon,E}}
\newcommand{\rreg}{\varrho}

\numberwithin{equation}{section}

\begin{document}
	
	\title[Existence for ERDS]
	{Global existence analysis of energy-reaction-diffusion systems}

	\begin{abstract}
		We establish global-in-time existence results for thermodynamically
		consistent reaction-(cross-)diffusion systems coupled to an equation
		describing heat transfer. Our main interest is to model species-dependent
		diffusivities, while at the same time ensuring thermodynamic consistency.
		A key difficulty of the non-isothermal case lies in the intrinsic presence of
		cross-diffusion type phenomena like the Soret and the Dufour effect: due to
		the temperature/energy dependence of the thermodynamic equilibria, a
		nonvanishing temperature gradient may drive a concentration flux even in a
		situation with constant concentrations; likewise, a nonvanishing
		concentration gradient may drive a heat flux even in a case of spatially
		constant temperature.  We use time discretisation and regularisation techniques and derive a priori estimates based on a suitable entropy and the associated entropy production.
		Renormalised solutions are used in cases where non-integrable
		diffusion fluxes or reaction terms appear.
	\end{abstract}
	
	\author[J. Fischer]{Julian Fischer}
	\address{Julian Fischer, Institute of Science and Technology Austria (IST Austria),
		Am Campus 1, 3400 Klosterneuburg, Austria, 
		E-Mail: julian.fischer@ist.ac.at}
	
	\author[K. Hopf]{Katharina Hopf}
	\address{Katharina Hopf, Weierstrass Institute for Applied Analysis and Stochastics (WIAS),
		Mohrenstra\ss{}e 39, 10117 Berlin, Germany, 
		E-Mail: hopf@wias-berlin.de}
	
	\author[M. Kniely]{Michael Kniely}
	\address{Michael Kniely, Institute of Science and Technology Austria (IST Austria),
		Am Campus 1, 3400 Klosterneuburg, Austria
		\newline
		\emph{Current address:} 
		Faculty of Mathematics, TU Dortmund University, Vogelpothsweg 87, 44227 Dortmund, Germany, 
		E-Mail: michael.kniely@tu-dortmund.de
	}
	
	\author[A. Mielke]{Alexander Mielke}
	\address{Alexander Mielke, Weierstra\ss-Institut f\"ur Angewandte  Analysis und Stochastik (WIAS),
		Mohrenstra\ss{}e 39, 10117 Berlin, Germany, and Institut f\"ur Mathematik, Humboldt-Universit\"at zu Berlin, Unter den Linden 6, 10099 Berlin, Germany, 
		E-Mail: alexander.mielke@wias-berlin.de}
	
	\date{}
	
	\keywords{Energy-reaction-diffusion systems, cross diffusion, 
		global-in-time existence of weak/renormalised solutions, 
		entropy method, Onsager system, Soret/Dufour effect}
	
	\thanks{2020 \textit{Mathematics Subject Classification}. 
		Primary 35Q79; Secondary 35K51, 35K57, 80A19}
	
	\maketitle
	
	\section{Introduction}
	
	The purpose of this paper is to establish global-in-time existence results for
	a class of reaction-diffusion systems arising in the   modelling of
	non-isothermal chemical reactions.  We consider thermodynamically consistent
	models  that are based on the total entropy 
	\begin{align}\label{eq:entropy.functional}
	\mathcal{S}(c,u):=\int_\Omega S(c(x),u(x))\,\dd x
	\end{align}
	as a Lyapunov functional. The point here is that the entropy density $S$
	is given in terms of the vector $c(x) \in [0,\infty)^I$ of the concentrations
	and the internal energy density $u(x)$ rather than the more commonly used
	temperature $\theta(x)$. Hence, we will be able to rely on concavity of $S$ in
	$(c,u)$, while concavity in $(c, \theta)$ does not hold in general, cf.\ e.g.\
	\cite{agh_2002_thermodynamic}. Even more, following~\cite{hhmm_2018,
		Mielke_2013_thermomechanical, MielkeMittnenzweig_2018_convergence} the total
	energy-reaction-diffusion system may be written as a gradient-flow equation for
	$-\mathcal S$.  For details on the derivation of our models, see
	Section~\ref{ssec:modelling}.
	
	Even without accounting for temperature dependence, developing an existence theory for solutions to entropy-driven reaction-diffusion systems has proven challenging: for instance, even for the simple example of a reaction-diffusion system with Fick-law diffusion
	\begin{align}
	\label{ReactionDiffusionNoCrossDiffusion}
	\dot c_i = a_i \Delta c_i + R_i(c)
	\end{align}
	(with $a_i>0$) and entropy-producing chemical reactions $R_i(c)$, the
	global-in-time existence of generalised solutions has only been shown rather
	recently \cite{Fischer_2015_renormalized} and relies on the concept of
	renormalised solutions; weak (or even strong) solutions are only known to exist
	under more restrictive assumptions on the
	reactions~\cite{Pierre_2010_GlobalExistenceSurvey,CGV_2019}.
	
	The existence analysis for thermodynamically realistic non-isothermal
	reaction-diffusion systems involves further challenges: due to the
	temperature-dependence of the thermodynamic equilibrium, a nonvanishing
	temperature gradient may drive a concentration flux even in situations with a
	vanishing concentration gradient, a phenomenon known as the \emph{Soret
		effect}.  Similarly, a nonvanishing concentration gradient may drive a heat
	flux even if the temperature is spatially constant (the \emph{Dufour effect}).
	Thermodynamically realistic models for reaction-diffusion systems must
	therefore allow for \emph{cross-diffusion} effects between internal energy
	density $u$ and the concentrations $c_i$.  The methods in
	\cite{Fischer_2015_renormalized} (respectively
	\cite{ChenJuengel_2019_renormalised}) rely heavily on the diagonal structure
	(respectively the dominantly diagonal structure) of the diffusion; without
	substantial new ideas, they do not apply to a setting with strong
	cross-diffusion.
	
	In the present work, we resolve these mathematical difficulties and provide an
	existence analysis for generalised solutions to a  nontrival class  of
	reaction-(cross-)diffusion systems modelling non-isothermal chemical
	reactions. Our class of thermodynamically consistent models is derived as a
	gradient flow in Onsager form (see Section~\ref{ssec:modelling} below); the
	resulting equations are typically of the form
	\begin{subequations}
		\label{ReactionDiffusionTemperature}
		\begin{align}
		\label{ReactionDiffusionTemperaturea}
		\dot c_i &= \divv \Big( m_i(c,u) \nabla \log \frac{c_i}{w_i(u)}
		+   a(c,u)  c_i \frac{w'_i(u)}{w_i(u)} \nabla u \bigg) + R_i(c,u),
		\\
		\label{ReactionDiffusionTemperatureb}
		\dot u &=
		\divv \!\big( \,  a(c,u)  \nabla u\big),
		\\
		&   \text{where } a(c,u)=-\pi_1(c,u)\Big(\hat \sigma''(u) + \sum_{i=1}^I c_i 
		\tfrac{w''_i(u)}{w_i(u)}\Big)>0. 
		\end{align}
	\end{subequations}
	Here, $w_i(u)$ denotes the equilibrium concentration of the chemical 
	species $\mathcal{A}_i$   (which
	may depend on the internal energy density $u$),  the function $m_i(c,u)\ge0$ 
	describes the diffusive mobility of $\mathcal{A}_i$, 
	the function  $\pi_1(c,u)$
	describes the heat conductivity, and $\hat \sigma(u)$ is related to the thermal part of
	the entropy density.
	
	It is rather immediate that the system
	\eqref{ReactionDiffusionTemperaturea}--\eqref{ReactionDiffusionTemperatureb}
	accounts for the Soret effect: a nonvanishing temperature gradient may drive a
	concentration flux even for spatially constant concentrations $c_i$.  At first
	glance, the Dufour effect may appear to not be accounted for by the model, as a
	vanishing internal energy gradient $\nabla u\equiv 0$ entails the absence of a
	heat flux. However, even for a constant temperature $\theta$ the internal
	energy density $u$ may be non-constant, as the thermodynamic relation of
	temperature and internal energy density $\frac{1}{\theta}=\partial_u S(c,u)$
	also involves the concentrations.
	
	\subsection{Modelling}\label{ssec:modelling}
	
	Following~\cite{Mielke_2013_thermomechanical,hhmm_2018,MielkeMittnenzweig_2018_convergence},
	we consider  energy-reaction-diffusion systems that are motivated by the
	thermodynamically consistent models that are obtained as gradient-flow
	equations  written in \textit{Onsager form}. Given a state space 
	$\mathbf Q$ as a convex subset of a Banach space $X$ with states
	$Z\in \mathbf Q$, an Onsager operator $\mathbb{K}=\mathbb{K}(Z)$  is a
	possibly unbounded,  symmetric and positive semi-definite operator, 
	which may be seen as a generalisation of the inverse of the Riemannian metric
	tensor in a smooth manifold $\mathbf Q$.  With a differentiable driving
	functional $-\mathcal{S}: \mathbf Q \to \mathbb R$ (typically  a convex
	functional on $X$), the associated evolution problem reads
	\begin{align}\label{eq:OnsForm}
	\dot Z = \mathbb{K}(Z)D\mathcal{S}(Z),
	\end{align}
	where here $D\mathcal{S}$ denotes the Fr\'echet derivative of the entropy
	functional~$\mathcal{S}$, and $\dot Z$ is the time derivative of the state variable
	$Z=Z(t)$.  
	
	In this framework, different physical phenomena can easily be coupled by 
	taking $\mathbb K$ as a sum of  operators corresponding to individual
	processes. In this paper, we let 
	$\mathbb{K}:=\mathbb{K}_{\mathrm{diff}}+\mathbb{K}_{\mathrm{react}}$, where
	$\mathbb{K}_{\mathrm{diff}}$ accounts for diffusion and
	$\mathbb{K}_{\mathrm{react}}$ for the reactions.  In our application, the
	vector $Z$ of state variables consists of the concentrations $(c_i)_{i=1}^I$ of
	the $I$ species $\mathcal{A}_i$ diffusing and reacting  on the bounded Lipschitz
	domain $\Omega\subset \mathbb{R}^d$ and another suitable variable modelling
	variations in temperature. The latter could be temperature $\theta(t,x)$
	itself, the internal energy density $u(t,x)$ or some other suitable scalar
	quantity~\cite{Mielke_2011_generic}.  As in
	\cite{MielkeMittnenzweig_2018_convergence, hhmm_2018} we model changes in
	temperature using the internal energy density $u\ge0$ (see
	also~\cite{agh_2002_thermodynamic, Mielke_2011_gradientStructure,
		Mielke_2013_thermomechanical}).  The major  advantage of this choice is that the entropy density  $S(c,u)$ determining the entropy
	functional~\eqref{eq:entropy.functional}
	is concave in  $(c,u)$,  which is a vector of extensive variables,  see 
	\cite{LY_1999_secondLaw, agh_2002_thermodynamic}. Also note that the total energy $\mathcal E(c,u) = \int_\Omega
	u(x)\,\dd x$ takes the most simple form, and while
	chemical reactions typically induce changes in temperature, the internal energy is left invariant and the total energy is a conserved quantity along solutions of
	\eqref{ReactionDiffusionTemperature}. This also means that the Onsager operator
	$\mathbb{K}_{\mathrm{react}}(c,u)$ has a nontrivial kernel, namely span$\{
	(0,1)^T\}$. 
	
	The basis of our model are  entropy densities 
	$S(c,u)$ of the form (cf.~\cite{hhmm_2018,MielkeMittnenzweig_2018_convergence})
	\begin{align}
	S(c, u) = \sigma(u) - B(c|w(u)) 
	=\underbrace{\sigma(u)-\sum_{i = 1}^I w_i(u)+I}_{=:\hat\sigma(u)}+\sum_{i = 1}^I 
	\Big(c_i \log w_i(u) - \lambda(c_i) \Big),
	\end{align}
	in short
	\begin{align}
	\label{eq:S}
	S(c, u) = \hat\sigma(u)+\sum_{i = 1}^I 
	\Big(c_i \log w_i(u) - \lambda(c_i) \Big),
	\end{align}
	where we continue to use the Boltzmann function 
	\begin{align}\label{eq:deflb}
	\lambda(s):= s\log(s)-s+1,
	\end{align}
	the relative Boltzmann entropy $B(c|w):=\sum_{i=1}^I w_i\lambda(c_i/w_i)$, and the
	thermal part $\sigma(u)$ of the entropy
	density when the concentrations $c=(c_i)_{i=1}^I$ are in equilibrium.  
	From the formula
	\[
	D_c S(c,u)= \big( \log w_i(u)- \lambda'(c_i) \big)_i = - \Big(
	\log\frac{c_i}{w_i(u)} \Big)_i
	\]
	we see that the vector $w(u)=(w_i(u))_i$ defines  
	the thermodynamic equilibrium $w=(w_i)_{i=1}^I$ of the concentrations as a
	function of the internal energy $u$.  We generally
	assume that $w_i\in C([0,\infty))\cap C^2((0,\infty))$ are positive,
	non-decreasing and concave. Moreover, the $C^2$ function
	$\hat\sigma(s):=\sigma(s)-\sum_{i=1}^Iw_i(s)+I$ is supposed to be strictly
	concave and increasing. These properties ensure that $S$ is
	strictly concave and that $u\mapsto S(c,u)$ is increasing (see the proof
	of~\cite[Prop.\,2.1]{MielkeMittnenzweig_2018_convergence}). The
	temperature $\theta$ can then be recovered via $\theta=\frac{1}{\partial_uS(c,u)}$ and is per se non-negative.
	
	The diffusive part $\mathbb{K}_{\mathrm{diff}}(Z)$ with $Z=(c,u)$ of the
	Onsager operator is assumed to be of the form
	\begin{align}
	\mathbb{K}_{\mathrm{diff}}(Z)W:=-\divv\big( \mathbb{M}(Z)\nabla W\big),
	\quad W(x)\in \mathbb{R}^{I+1},
	\end{align}
	for a symmetric and positive semi-definite matrix
	$\mathbb M(Z) \in \mathbb R^{(I+1) \times (I+1)}$, the so-called mobility
	matrix, satisfying suitable additional conditions. Here and below,
	$\nabla=\nabla_x$ denotes the gradient with respect to $x\in\Omega$.  The
	operator  $\mathbb{K}_\mathrm{diff}$ will be complemented with the no-flux
	boundary condition $(\mathbb{M}(Z)\nabla W)\cdot\nu=0$ on $\partial\Omega$,
	where $\nu$ denotes the outer unit normal to $\partial\Omega$.  The precise
	structure of the reactive part
	$$
	\mathbb{K}_{\mathrm{react}}(Z)=\begin{pmatrix}
	\mathbb{L}(Z) & 0 \\0&0
	\end{pmatrix}\in \mathbb{R}^{(I+1)\times(I+1)}
	$$
	will be less relevant in this work, the main hypothesis being that it leaves
	the energy equation unchanged.  We will therefore directly formulate our
	hypotheses in terms of $R(Z)=(R_i(Z))_{i=1}^I$, where
	$R_i(Z)=(\mathbb{L}(Z)D_cS(Z))_i$ for $i\in\{1,\dots,I\}$. The positive
	semi-definiteness of $\mathbb{K}_{\mathrm{react}}$ then means that
	$D_cS(Z)\cdot R(Z)\ge0$ (see condition~\eqref{eq:109} below).  We refer to
	\cite{Mielke_2011_gradientStructure, Mielke_2013_thermomechanical} for concrete
	choices of $\mathbb{L}$ for realising reactions following mass-action
	kinetics as in \eqref{ReactionRateMassAction}. 
	
	The above choices for $\mathbb{K}_{\mathrm{diff}}$ and
	$\mathbb{K}_{\mathrm{react}}$ of the  Onsager operators encode conservation of
	the total energy $\mathcal{E}(c,u)=\int_\Omega u(x)\,\dd x$ since  $D\mathcal{E}(c,u)\equiv 1$, hence  $\mathbb{K}(Z)D\mathcal{E}(Z)\equiv0$
	and thus, thanks to symmetry, $\tfrac{\dd}{\dd t}\mathcal{E}(Z)\equiv0$ along
	any curve $Z=Z(t)$ satisfying~\eqref{eq:OnsForm}.  For more information on the
	(modelling) background of such ERDS, we refer
	to~\cite{Mielke_2013_thermomechanical, MielkeMittnenzweig_2018_convergence,
		hhmm_2018}.

	Thus, letting
	$A(Z):=-\mathbb{M}(Z) D^2 S(Z)$ and $R^\circ(Z):=(R(Z),0)^T$, the
	\textit{energy-reaction-diffusion system} (ERDS) we consider takes the form
	\begin{subequations}
		\label{eq:system}
		\begin{align}
		\label{eq:I.system}
		\dot Z & = \divv \big(A(Z) \nabla Z\big)+R^\circ(Z), &  & t>0,\ x\in \Omega,
		\\
		\label{eq:I.bc}
		0   & =\nu \cdot A(Z)\nabla Z,                &  & t>0,\ x\in\partial \Omega.
		\end{align}
	\end{subequations}
	This system will be supplemented with suitable initial
	conditions 
	\[
	Z_{|t=0}=Z^0 \ \ \mbox{in} \ \Omega.
	\] 
	Without loss of generality, we assume that the bounded Lipschitz domain
	$\Omega\subset \mathbb{R}^d$ has unit volume, i.e.~$|\Omega| = 1$.

	The analytical results in the
	references~\cite{MielkeMittnenzweig_2018_convergence,hhmm_2018} primarily
	concern entropy-entropy production inequalities and address the question of
	decay rates to equilibrium assuming the existence of suitably regular
	global-in-time solutions.  To the authors' knowledge, the only global existence
	result for an ERDS of the form~\eqref{eq:I.system}--\eqref{eq:I.bc} (with semiconductor-like
	reactions) appears in~\cite[Section~6]{hhmm_2018} for the special choice
	$\mathbb{M}(c,u)=-k (D^2S(c,u))^{-1}$, where $k>0$ is a positive constant, and
	under the assumption of bounded initial data $(c^0,u^0)$ with $\inf u^0>0$.  In
	this case, species' diffusivities are all equal allowing the authors to infer
	global existence from maximum type principles.  
	
	\medskip
	
	\paragraph{\bf Mobility matrix.}
	Let us note that owing to the
	entropic coupling between $c_i$ and $u$, the Hessian of the entropy density is
	non-diagonal and takes the form
	\begin{align}
	D^2 S (c, u) =
	\begin{pmatrix}
	\ddots &                  &        & \vdots           \\
	& -\frac{1}{c_i}   &        & \frac{w'_i}{w_i} \\
	&                  & \ddots & \vdots           \\
	\dots  & \frac{w'_i}{w_i} & \dots  & \partial^2_u S
	\end{pmatrix},
	\end{align}
	where we abbreviated $w_i=w_i(u)$ and
	\begin{equation}
	\label{eq:Suu}
	\partial^2_u S (c,u) = \hat \sigma''(u) + \sum_{i=1}^I c_i
	\tfrac{w''_i(u)}{w_i(u)}-\sum_{i=1}^I c_i
	\left(\tfrac{w'_i(u)}{w_i(u)}\right)^2  < 0,
	\end{equation}
	because of the concavity of $\hat \sigma$ and $w_i$. 
	
	Except for the restrictive case $\mathbb{M}(c, u)=  - k (D^2S(c,u))^{-1}$
	considered in \cite{hhmm_2018},  
	the matrix $  A(c,u)  =-\mathbb{M}(c, u) D^2 S(c, u)$ is not diagonal,
	usually not symmetric, and positive semi-definiteness cannot be expected.
	However, thanks to the formal entropy structure and the concavity of $S$,
	problem~\eqref{eq:system} is still parabolic in a certain sense
	(cf.~Section~\ref{ssec:ev}).  In our approach, control of the solution is obtained
	via entropy estimates, coercivity bounds for the entropy production and an
	$L^2$-energy estimate for the thermal part, where we also exploit the absence
	of source terms in the energy equation  (see page~\pageref{p:strategy} for
	more details on our strategy.)

	Our existence analysis focuses on mobility matrices of the form
	\begin{subequations}
		\label{eq:mobility}
		\begin{align}
		\label{eq:mobilitya}
		&\mathbb M(c, u) := \diag \big( m_1,\dots,m_I, m_{I+1}\big) + \pi_1(c,u)\mu\otimes\mu
		\\
		\label{eqdefmui}
		&\text{with } \mu_{I+1}(c,u)=1 \text{ and } \mu_i(c,u) \coleq c_i
		\,\frac{w_i'(u)}{w_i(u)} \quad \text{for }\{1,\dots, I\}, 
		\end{align}
	\end{subequations}
	where the mobilities $m_i(c,u) \geq 0$, $i\in\{1,\dots,I\}$,  and the coupling 
	coefficient  $\pi_1(c,u)\ge 0$ are specified below,  whereas
	$m_{I+1}\equiv0$ throughout (but see Remark~\ref{rem:CrossEnergyEqn}).  
	This choice of $\mathbb{M}$ leads to equations of the form~\eqref{ReactionDiffusionTemperature} (see also~\eqref{eq:defb*}).
	
	Loosely speaking, this ansatz for $\mathbb{M}$ can be seen as a
	thermodynamically consistent generalisation of the above-mentioned choice
	$\mathbb{M}(c,u)=-k (D^2S(c,u))^{-1}$ to allow for species-dependent
	diffusivities.  In fact, when choosing $m_i \coleq c_i$ and
	$\pi_1(c,u) \coleq 1/\gamma(c,u)$ with
	\begin{equation}\label{eq:def.gamma}
	\gamma(c,u) \coleq  -\partial^2_u S(c,u) - \sum_{i=1}^I c_i \big(\tfrac{w'_i(u)}{w_i(u)}\big)^2  
	= -\hat\sigma''(u)-\sum_{i=1}^Ic_i\frac{w_i''(u)}{w_i(u)},
	\end{equation} 
	then $\mathbb{M}(c,u)$ becomes the inverse Hessian of $S(c, u)$
	(see~\cite[p.~777]{MielkeMittnenzweig_2018_convergence}). We would like to emphasize
	that a diagonal diffusion matrix
	$A=\diag(\dots)\in \mathbb{R}^{(I+1)\times(I+1)}$ cannot be produced in a
	thermodynamically consistent setting unless its entries on the diagonal are all
	equal, since otherwise $\mathbb{M}$ lacks symmetry.
	
	In this paper, $m_i(c,u)$ is assumed to take the form
	\begin{enumerate}[label=\normalfont{(M\arabic*)}]
		\item\label{it:mi1} $m_i(c,u) = c_ia_i(c,u)$ for $i\in\{1,\dots, I\}$, $m_{I+1}\equiv0$, where
		the functions $a_i\in C([0,\infty)^{I+1})$ satisfy for certain constants
		$\kappa_{1,i},\kappa_{0,i}\ge0$
		\[	      
		a_i(c,u)\sim \kappa_{0,i} + \kappa_{1,i}c_i \quad 
		\text{for }  (c,u)\in[0,\infty)^{I+1}.
		\]
	\end{enumerate}
	(See~\emph{Notations} on page~\pageref{p:notations} for the meaning of the
	symbol~$\sim$ .)  Here, $\kappa_{0,i}$ is typically called the diffusion
	coefficient of the  species $\mathcal{A}_i$,  while the $\kappa_{1,i}$ are sometimes
	referred to as self-diffusion coefficients. The motivation for this choice of
	$a_i(c,u)$ comes from the observation that, to some extent, models with
	positive coefficients $\kappa_{1,i},\kappa_{0,i}>0$ may be seen as a
	regularisation of the model where $\kappa_{1,i}=0$ and $\kappa_{0,i}>0$ for all
	$i$ since then entropy estimates may allow to control not only
	$\nabla \sqrt{c_i}$ but also $\nabla c_i$ in $L^2$, which via Sobolev
	embeddings leads to better integrability properties of $c_i$. One should caveat
	though that, owing to the entropic coupling, the problem is more complex and a
	positive $\kappa_{1,i}$ does not necessarily ensure an $L^2$ a priori bound of
	$\nabla c_i$ by entropy production estimates.  In fact, for the special choice
	$\pi_1\sim  {1}/{\gamma}$ such a regularisation is neither needed nor
	helpful, and we will construct global-in-time weak solutions provided that
	$\kappa_{1,i}=0$ for all $i$.  
	
	However, we are interested in more general choices of coefficients $\pi_1$ in~\eqref{eq:mobilitya} allowing for strong
	cross-diffusion due to energy gradients, as alluded to in the beginning.
	Here, strong cross-diffusion manifests itself in the fact that
	the flux terms generated by off-diagonal entries in the diffusion matrix $A(Z)$
	cannot be controlled in $L^1_{t,x}$ by means of natural entropy estimates associated with the system. Surprisingly (when compared to existing literature), for such problems, we are still able to show an existence result:
	we will first construct global-in-time weak solutions for a family of \enquote{regularised} models with $\kappa_{1,i}>0$ for all $i\in\{1,\dots, I\}$. These approximate solutions will then enable an existence analysis in the case of strong cross-diffusion (arising due to vanishing self-diffusion) on the basis of the concept of renormalised solutions, as carried out in Sections~\ref{sec:prelim2}--\ref{sec:renormalised}.
	
	Observe that, by the choice of $\mu_i$ in
	\eqref{eqdefmui}, cross-diffusion between $c_i$ and $c_j,$ $i\not=j,$ does not
	arise in our models.  More precisely, our definition
	$A(c,u) := -\mathbb M(c,u) D^2 S(c,u)$ and the fact that
	$-\frac{1}{c_i}\mu_i+\frac{w_i'}{w_i}\mu_{I+1}\equiv0$ for $i\in\{1,\dots,I\}$ imply
	that the submatrix $(A_{ij})^{1\le i\le I+1}_{1\le j\le I}$ only depends on the
	diagonal part $\diag(m_i)$ of $\mathbb{M}$.  
	We would like to note that in the
	self-diffusive case with $\kappa_{1,i}>0$ for all $i$, our methods extend to
	situations modelling cross-diffusion between species, see
	Remark~\ref{rem:CrossEffect}.
	
	For the above choice of~$\mathbb{M}$, a direct computation of $A$ yields 
	the coefficients 
	\begin{equation}\label{eq:defb*}
	\begin{aligned}
	A_{ii}(c,u) &= \tfrac{m_i(c,u)}{c_i} =: a_i(c,u) \qquad\text{for }i=1,\ldots,I,
	\\
	A_{i,j}(c,u)&=0 \qquad\text{for } i\neq j \text{ and } i,j\in
	\{1,\ldots,I\},
	\\  
	A_{i,I+1}(c,u)&=
	-m_i\tfrac{w_i'}{w_i}-\pi_1c_i\tfrac{w_i'}{w_i}\Big(\hat\sigma''(u)+\sum_{j=1}^Ic_j\tfrac{w_j''(u)}{w_j(u)}\Big)
	\\
	&=\big({-}a_i(c,u)+\pi_1(c,u)\gamma(c,u)\big)c_i\,\tfrac{w_i'(u)}{w_i(u)}
	\qquad\text{for }i=1,\ldots,I,
	\\
	A_{I+1,I+1}(c,u) & = -\pi_1\bigg(\hat\sigma''(u)+ 
	\sum_{i=1}^Ic_i\tfrac{w_i''(u)}{w_i(u)}\bigg)=\pi_1(c,u)\gamma(c,u)=:a(c,u).
	\end{aligned}
	\end{equation}
	In particular,
	$\sum _jA_{ij}(Z)\nabla Z_j= a_i(c, u) \nabla c_i + A_{i,I+1}(c,u)\nabla u $ if
	$i\not= I+1$, and $A_{I+1,j}(Z)=a(c,u)\delta_{I+1,j}$.
	
	\medskip
	
	\paragraph{\bf Reactions}
	For reaction-diffusion equations with physically realistic reaction rates, the
	available global existence results in the literature often rely on renormalised
	solutions. To illustrate the underlying reason, consider for example a single
	reversible chemical reaction of the form 
	\begin{align*}
	\alpha_1 \mathcal{A}_1+\ldots+  \alpha_I \mathcal{A}_I  \rightleftharpoons
	\beta_1 \mathcal{A}_1+\ldots+   \beta_I \mathcal{A}_I.   
	\end{align*}
	The reaction rates according to mass-action kinetics are then given by
	\begin{align}
	\label{ReactionRateMassAction}
	R_i(c,u) =  \kappa(c,u)\:\bigg(  \prod_{j=1}^I 
	\Big(\frac{c_j}{w_j(u)}\Big)^{\alpha_j}  
	- \prod_{k=1}^I \Big(\frac{c_k}{w_k(u)}\Big)^{\beta_k}  \bigg)
	\: (\beta_i-\alpha_i) , 
	\end{align}
	for some non-negative reaction coefficient $\kappa(c,u)\geq 0$.  At
	the same time, the only known energy estimate for the basic reaction-diffusion
	system \eqref{ReactionDiffusionNoCrossDiffusion} is the entropy estimate; it
	merely provides control of quantities of the form
	$\sup_t \int_\Omega c_i \log c_i \,\dd x$ or
	$\smash{\int_0^T \int_\Omega |\nabla \sqrt{c_i}|^2 \,\dd x\dd t}$.  Thus,
	without unphysically strong assumptions on 
	the stoichiometric coefficients $\alpha_j,\beta_k$ (or on $\kappa(c,u)$),
	the available energy estimates are not sufficient to ensure
	$L^1([0,T]\times\Omega)$ integrability of the reaction terms
	\eqref{ReactionRateMassAction}, which would be required for standard weak
	solutions concepts.
	
	Nevertheless, reactions of the type \eqref{ReactionRateMassAction} satisfy the
	entropy inequality 
	\begin{align*}
	D_cS(c,u)\cdot R(c,u)&= \kappa(c,u)\:\big( c_w^\alpha - c_w^\beta  \big)
	\: \sum_{i=1}^I
	(\beta_i{-}\alpha_i)\log\big(\frac{w_i(u)}{c_i} \big) 
	\\
	&= \kappa(c,u) \big( c_w^\alpha - c_w^\beta  \big)\big(\log c_w^\alpha  - \log
	c_w^\beta \big) \geq 0,
	\end{align*}
	where $c_w^\alpha = \prod_{j=1}^I \big(\frac{c_j}{w_j(u)}\big)^{\alpha_j} $ and
	similarly for $c_w^\beta$. Thus, it is reasonable to impose the 
	\emph{entropy inequality} \leqnomode
	\begin{align}\label{eq:109}\tag{R1}
	D_cS(c,u)\cdot R(c,u)\ge0\quad \text{ for all }(c,u)\in(0,\infty)^{I+1}.
	\end{align}
	\reqnomode 
	This condition together with $\mathbb{M}=\mathbb{M}^T \geq 0$ ensure
	that, formally, the entropy functional $\mathcal{S}(c,u)$ 
	is non-decreasing along trajectories of system~\eqref{eq:system}.  We note
	that, since $\partial_{c_i}S(c,u)=-\log\big(\frac{c_i}{w_i(u)}\big)$, the
	condition $[c_i=0\implies R_i\ge0]$, which is  necessary for the positivity of $c_i$, is
	implicitly contained in hypothesis~\eqref{eq:109}.  For more background,
	applications and specific examples for admissible choices of $R(c,u)$ we refer
	to~\cite[Sec.\,2.3]{MielkeMittnenzweig_2018_convergence} and \cite[Sec.\,3.2]{hhmm_2018}
	and references therein.

	\medskip
	
	\subsection{Main results}
	
	\begin{genAssumption}\hypertarget{ga}{}
		In all of our results, the entropy function $S_0$ is assumed to have the form
		\begin{align}\label{eq:S0}
		S_0(c, u) = \hat \sigma_0(u) + \sum_{i = 1}^I \Big(c_i \log w_i(u) - \lambda(c_i) \Big),
		\end{align}
		where $\lambda$ is as in~\eqref{eq:deflb}, and $\mathbb{M}(c,u)$ is supposed to
		be given by~\eqref{eq:mobilitya}--\eqref{eqdefmui} with~\ref{it:mi1} being satisfied.  Regarding
		the coefficient functions $\hat\sigma_0,w_i,\pi_1$, the following basic
		regularity and qualitative properties will be supposed:
		
		\begin{enumerate}[label=\normalfont{(B\arabic*)}]
			\item\label{it:sigma} Suppose that $\hat\sigma_0 \in C^2((0,\infty))$ is a strictly
			concave, increasing function.
			\item\label{it:w} Let $w_i\in C([0,\infty)) \cap C^2((0,\infty))$ with
			$w_i(0)>0$ for $i\in\{1,\dots, I\}$ be concave and non-decreasing functions.
			\item\label{it:pi} Assume that $\pi_1\in C([0,\infty)^{I+1},[0, \infty))$ with $\sqrt{\pi_1(u,c)}w_i'(u)\in C([0,\infty)^{I+1})$ for all $i$.
			\item\label{it:sig1}\label{it:H1}
			$\lim_{u\downarrow0}\hat\sigma_0'(u)=+\infty$,
			$\lim_{u\uparrow\infty}\hat\sigma_0'(u)=0$
			\item\label{it:6}\label{it:H3} $\exists\beta\in(0,1)$ such that
			$w_i(u)\lesssim  (1{+}u)^\beta  $ for all $i\in\{1,\dots,I\}$
			\item\label{it:4}\label{it:H6} $0\le\pi_1^\frac{1}{2}(c,u)\lesssim 1+u$.
		\end{enumerate}
		Finally, we generally assume the reaction rates
		$R\in C([0,\infty)^{I+1},\mathbb{R}^I)$ to be continuous and to satisfy the
		entropic production estimate~\eqref{eq:109}.
	\end{genAssumption}
	The above collection of assumptions will be referred to below as General Hypotheses.
	
	\medskip

	\paragraph{\bf The two models (H) and (H')}
	In our analysis, we consider two `models', which mainly differ in the choice of the coefficient function $\pi_1$ of the rank-one part of $\mathbb{M}$.
	The first model is determined by the collection of hypotheses~\hypertarget{H}{\textbf{\textup{(H)}}} consisting of the following three conditions~\ref{it:2}--\ref{it:1}
	\begin{enumerate}[label=\normalfont{(H\arabic*)}]
		\item\label{it:2}\label{it:H2} $\pi_1\gamma\gtrsim 1$ \qquad 
		(with $\gamma$ given by~\eqref{eq:def.gamma} for $S=S_0$, i.e.~$\gamma=-\hat\sigma_0''(u)-\sum_{i=1}^Ic_i\tfrac{w_i''(u)}{w_i(u)}>0$)
		\item\label{it:3}\label{it:H4} $w_i'(u)\lesssim-w_i''(u)\pi_1^\frac{1}{2}(c,u)$
		\item\label{it:1}\label{it:H5} $\pi_1^\frac{1}{2}(c,u)\tfrac{w_i'}{w_i}\lesssim 1$.
	\end{enumerate}
	The second model is determined by hypotheses~\hypertarget{H'}{\textbf{\textup{(H')}}} consisting of the following two conditions
	\begin{enumerate}[label=\normalfont{(H\arabic*')}]
		\item\label{it:pi.m3} $\pi_1\gamma\sim1$
		\item \label{it:w.m3} $(w_i')^2\lesssim -w_i''w_i$. 
	\end{enumerate}
	\medskip
	
	Let us briefly comment on the hypotheses concerning $\hat\sigma_0,w_i$ and $\pi_1$ formulated in the last two paragraphs.
	The assumptions on the concave functions $\hat\sigma_0$ and $w_i$ are not very restrictive and allow to include essentially all examples typically used in the modelling
	such as $\hat\sigma_0(s)=b\log(s)$ or
	$\hat\sigma_0(s)=bs^\alpha$ for $\alpha\in(0,1)$, where $b>0$ is a positive constant.
	Typical admissible choices of $w_i$ are $w_i(u)=b_{0,i}+b_{1,i}u^{\beta_i}$ or  $w_i(u)=b_{0,i}(1+b_{1,i}u)^{\beta_i}$ for $\beta_i\in(0,1)$ and $b_{0,i}>0, b_{1,i}\ge0$, in which case one can choose $\pi_1^\frac{1}{2}\sim u$ and $\pi_1^\frac{1}{2}\sim1+u$ respectively, when assuming~\HH. When considering~\HHp\ instead, we should note the compatibility of hypotheses~\ref{it:H6} and~\ref{it:pi.m3} for any power-law ansatz of $\hat\sigma_0$. Indeed, we then have $\gamma(c,u)\ge-\hat\sigma_0''(u)\gtrsim u^{\alpha-2}$ and hence $\pi^\frac{1}{2}\lesssim u+1$ whenever $\alpha\in[0,1)$ (with $\alpha=0$ corresponding to $\hat\sigma_0(u)=\log(u)$).
	Hypotheses~\ref{it:3} of~\HH\ and~\ref{it:w.m3} of~\HHp\ can be regarded as concavity conditions on the equilibria $w_i$ and rule out, for instance, that $w_i(u)=b_1u+b_0$ for $b_1>0,b_0\ge0$ for some $i\in\{1,\dots,I\}$. In view of~\ref{it:w} and~\ref{it:H6}, Hypothesis~\ref{it:1} can be shown to be always fulfilled if $u\ge 1$, and should thus be understood as a condition for small arguments $u$ close to zero.
	
	\medskip
	
	Our first main result assumes the following hypotheses. 
	
	\begin{assumption}
		\label{mainhypo1}
		
		Let the~\ga\ be satisfied and assume that either hypotheses~\HH\ or
		hypotheses~\HHp\ are fulfilled.
		
		In case~\HH, we additionally assume that $\kappa_{1,i}>0$ for all
		$i\in\{1,\dots, I\}$ and that there exists $0\le q_1<\ol q_1:=2+\tfrac{2}{d}$ and
		$0\le q_2<\ol q_2:=2+\tfrac{4}{d}$ such that $|R(c,u)|\lesssim 1+|c|^{q_1}+|u|^{q_2}.$
		
		Under hypotheses~\HHp, we assume that $\kappa_{1,i}=0$, $\kappa_{0,i}>0$ for
		all $i\in\{1,\dots, I\}$, and that there exists  $0\le q_1<\tilde q_1:=1+\tfrac{2}{d}$ and
		$0\le q_2<\tilde q_2:=2+\tfrac{4}{d}$ such that $|R(c,u)|\lesssim 1+|c|^{q_1}+|u|^{q_2}$.
	\end{assumption}
	
	The proof of the following main result will be completed in Section
	\ref{su:ProMainThm}.

	\begin{theorem}[Global existence of weak solutions]
		\label{thm:main}
		Let Hypotheses~\ref{mainhypo1} hold true.  Let $Z^0=(c^0, u^0)$ have
		non-negative components satisfying $c^0_i\in L\log L(\Omega)$,
		$i\in\{1,\dots,I\}$, $u^0\in L^2(\Omega)$ and
		$\hat\sigma_{0,-}(u^0)\in L^1(\Omega)$, where $\hat\sigma_{0,-}$ denotes the negative part of $\hat\sigma_{0}$.  Then there exist non-negative
		functions
		\begin{align}
		\label{eq:Th.est.c}
		c_i &\in L^\infty(0,\infty;L\log L(\Omega)),\quad  i\in\{1,\dots,I\}, \\
		\label{eq:Th.est.u}
		u &\in L^\infty(0,\infty;L^2(\Omega))
		\end{align}
		such that $Z:=(c_1,\dots,c_I,u)$ has the regularity
		\begin{align}
		\label{eq:Th.est.flux}
		&\mathbb{M}(Z)D^2S_0(Z)\nabla Z\in
		L^s_{\mathrm{loc}}([0,\infty) {\times} \ol\Omega)^{d(I+1)}, 
		\text{ where }  s=\tfrac{2d+2}{2d+1}, 
		\\
		\label{eq:Th.est.Zdot}
		&\partial_tZ_i\in L^r_\mathrm{loc}(0,\infty; W^{1,r'}(\Omega)^*) \;\;\text{for suitable }r>1, \;\;\tfrac{1}{r'}+\tfrac{1}{r}=1 \\&\hspace{3cm}(\text{in case }\HH{} \text{\,one may choose }  \text{$r:=\min\big\{\tfrac{2d+2}{dq_1},\tfrac{2d+4}{dq_2},s\big\}$}),
		\end{align}
		and satisfies for all $T>0$ and all
		$\phi=(\phi',\phi_{I+1})\in L^\infty(0,T;W^{1,\infty}(\Omega))^{I+1}$ the
		equation
		\begin{equation}\label{eq:00}
		\begin{split}
		\int_0^T\langle\partial_tZ,\phi\rangle\,\dd t
		-\int_0^T\!\!\int_{\Omega}(\mathbb{M}(Z)D^2S_0(Z)\nabla Z):&\nabla
		\phi\,\dd x\dd t \\&=\int_0^T\!\!\int_{\Omega}R(Z)\cdot\phi'\,\dd x\dd t
		\end{split}
		\end{equation}
		and the identity $Z(t=0,\cdot)=(c^0,u^0)$ as an equality in
		$(W^{1,\infty}(\Omega)^*)^{I+1}$.
		\smallskip
		
		Furthermore, the internal energy is conserved, i.e.~for all $t>0$,
		\begin{align}
		\int_\Omega u(t,x)\,\dd x = 	\int_\Omega u^0(x)\,\dd x,
		\end{align}
		and the solution satisfies the bound 
		\begin{equation}
		\label{eq:mainbd}
		\begin{split}
		\|c\|_{L^\infty(L\log L)}&+\|u\|_{L^\infty
			L^2}+\|\hat\sigma_{0,-}(u)\|_{L^\infty L^1}
		\\&+\int_0^\infty\!\!\int_\Om\pi_1\gamma|\nabla u|^2\,\dd x\dd
		t+\int_0^\infty\!\!\int_\Om P(c,u)\,\dd x\dd t \\&\qquad\qquad\qquad\le
		C(\|c^0\|_{L\log L}, \|u^0\|_{L^2}, \|\hat\sigma_{0,-}(u^0)\|_{L^1}),
		\end{split}
		\end{equation}
		where 
		\begin{equation}\label{eq:1000}
		\begin{split}
		P(c,u)&:=\sum_{i=1}^I\Big(\kappa_{1,i}|\nabla c_i|^2
		+\kappa_{0,i}|\nabla \sqrt{c_i}|^2\Big)
		+\pi_1\gamma^2|\nabla u|^2.
		\end{split}
		\end{equation}
		(Observe that the last term in~\eqref{eq:1000} differs from the
		integrand of the first term in the second line of~\eqref{eq:mainbd} by a factor of $\gamma$.)
	\end{theorem}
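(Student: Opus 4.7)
The plan is a combined time-discrete and elliptic regularisation scheme, with a priori bounds coming from the entropic gradient-flow structure plus energy conservation, followed by passage to the limit. Under hypotheses~\HH{} (where $\kappa_{1,i}>0$ provides $L^2$-control on $\nabla c_i$) this yields weak solutions directly. Under hypotheses~\HHp{} one first treats an approximate (H)-type model with artificial self-diffusion $\kappa_{1,i}^\varepsilon>0$, and in a second stage sends $\kappa_{1,i}^\varepsilon\to 0$ within the renormalised-solution framework developed in Sections~\ref{sec:prelim2}--\ref{sec:renormalised}.

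For the approximation scheme, I would fix a time step $\tau>0$ and a regularisation parameter $\delta>0$, truncate the singularities of $\log$ at $c_i=0$ and of $\hat\sigma_0'$ at $u=0$ (allowed by~\ref{it:H1}), and add $\delta\Delta$-regularisation to enforce uniform ellipticity of the elliptic step. Existence for each implicit-Euler step follows from a minimisation in the entropic variable $\xi=DS_0(Z)$, exploiting strict concavity of $S_0$ and $\mathbb{M}\ge0$, or from a Leray--Schauder fixed point. Testing the discrete equation with $-DS_0(Z^n)$ and using~\eqref{eq:109} together with the symmetry of $\mathbb{M}$ yields, after summation in $n$ and algebraic manipulations exploiting~\ref{it:H2}/\ref{it:pi.m3} and~\ref{it:H4}/\ref{it:w.m3}, a uniform bound of the form
\begin{align}
\sup_{t}\bigl(-\mathcal{S}_0(Z(t))\bigr) \;+\; \int_0^T\!\!\int_\Omega \bigl(P(c,u)+\pi_1\gamma|\nabla u|^2\bigr)\,\dd x\dd t \;\lesssim\; 1+|\mathcal{S}_0(Z^0)|,
\end{align}
from which the $L^\infty(L\log L)$-bound on $c$, the $L^\infty L^1$-bound on $\hat\sigma_{0,-}(u)$, and the coercivity estimates in~\eqref{eq:mainbd} follow. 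Independently, since $R^\circ_{I+1}\equiv0$, testing the discrete energy equation with $u^n$ gives the $L^\infty L^2$-bound on $u$; testing with the constant $1$ yields conservation of $\int_\Omega u\,\dd x$ via $\mathbb{K}D\mathcal{E}=0$ and the no-flux condition.

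To pass to the limit, parabolic Sobolev interpolation $L^\infty(L^2)\cap L^2(H^1)\hookrightarrow L^{2+4/d}$, applied to $u$ and (under~\HH) to $c_i$, together with the subcritical polynomial growth of $R$, gives equi-integrability of the reaction terms by Vitali. The flux regularity~\eqref{eq:Th.est.flux} with exponent $s=(2d+2)/(2d+1)$ follows from H\"older applied to the factorisation $\mathbb{M}(Z)D^2S_0(Z)\nabla Z=\mathbb{M}(Z)^{1/2}\cdot\mathbb{M}(Z)^{1/2}D^2S_0(Z)\nabla Z$, using the entropy production bound on the second factor and the $L^\infty_tL^p_x$ control on the first. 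The time derivative bound~\eqref{eq:Th.est.Zdot} is then immediate from the equation. Aubin--Lions produces strong $L^p_{t,x}$-convergence along a subsequence of suitable nonlinear images of $c_i$ and of $u$, and continuity of $\mathbb{M}$, $D^2S_0$ and $R$ identifies the limits.

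The main obstacle is identifying the rank-one cross-diffusion flux $\pi_1\mu\otimes\mu\,\nabla Z$ in the limit under~\HHp, since the entropy production controls $\sqrt{\pi_1\gamma}\,\nabla u$ but the flux itself contains the larger factor $\sqrt{\pi_1}$ without the $\sqrt{\gamma}$ weight. Hypothesis~\ref{it:w.m3} together with $\pi_1\gamma\sim1$ is precisely tailored to reabsorb this discrepancy: concavity of $w_i$ allows $w_i'$ to be reexpressed in terms of $-w_i''$, which in turn feeds into $\gamma$ and lets the cross-flux be dominated by the diagonal diffusion modulo a term absorbable into $\pi_1\gamma|\nabla u|^2$. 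The $c_i$-dependence hidden in $\mu_i=c_iw_i'/w_i$ must then be handled on the renormalised level via truncations $\xi_\eta(c_i)$ \`a la~\cite{Fischer_2015_renormalized}. Finally, degeneration $u\downarrow 0$ is ruled out on bulk sets by the $L^\infty L^1$-bound on $\hat\sigma_{0,-}(u)$ combined with~\ref{it:H1}, which is what makes the entropy structure closable in the limit.
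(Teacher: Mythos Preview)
Your overall scheme for case~\HH{} (implicit time discretisation, regularisation in the entropy variable, Leray--Schauder for the elliptic step, entropy and $L^2$-energy estimates, Gagliardo--Nirenberg plus Aubin--Lions for compactness) matches the paper's approach closely.

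Your treatment of case~\HHp{}, however, contains a conceptual error. You propose to handle~\HHp{} by introducing artificial self-diffusion $\kappa_{1,i}^\varepsilon>0$ and then removing it via the renormalised-solution machinery of Sections~\ref{sec:prelim2}--\ref{sec:renormalised}. This conflates case~\HHp{} of Theorem~\ref{thm:main} with Theorem~\ref{theoremexistence}: the renormalised framework is built for hypotheses~\HH{} with $\kappa_{1,i}=0$, where the thermodiffusive flux genuinely escapes $L^1$-control. It is neither needed nor applied to~\HHp.

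Under~\HHp{} the paper constructs \emph{weak} solutions directly, keeping $\kappa_{1,i}=0$ throughout the approximation. The point is that~\ref{it:pi.m3} and~\ref{it:w.m3} yield the sharper pointwise flux bound $|A(Z)\nabla Z|\lesssim\big(\max_i\sqrt{c_i}+\sqrt{\pi_1}\big)P^{1/2}(Z)$ (Lemma~\ref{l:efl}, estimate~\eqref{eq:402.3}), involving only $\sqrt{c_i}$ rather than $c_i$. Since the entropy production already controls $\nabla\sqrt{c_i}$ in $L^2$, Gagliardo--Nirenberg gives $\sqrt{c_i}\in L^{2+}_{t,x}$, which is enough for the $L^s$-flux bound with $s=(2d{+}2)/(2d{+}1)$. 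Compactness of $c_i$ is then obtained not from the standard Aubin--Lions lemma (which would need $\nabla c_i\in L^2$) but from the nonlinear version in~\cite{CJL_2014}, applied to $\sqrt{c_i}\in L^2(0,T;H^1)$ together with $\partial_t c_i$ in a negative Sobolev space. No truncations $\xi_\eta(c_i)$ are required. You in fact identify the correct algebraic mechanism (``$(w_i')^2\lesssim -w_i''w_i$ lets the cross-flux be reabsorbed into $\gamma$''); the mistake is only in the conclusion you draw from it.

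A smaller point: the paper's $\delta$-regularisation is not a truncation of the logarithmic singularities at $c_i=0$ or a Laplacian perturbation. It modifies the entropy to $S_\delta=S_0-\delta\lambda(u)$ so that $-DS_\delta$ becomes onto $\mathbb{R}^{I+1}$ (making the entropy-variable map globally invertible), and simultaneously adjusts $\pi_1\to\pi_{1,\delta}$ so that hypotheses~\HH{} resp.~\HHp{} are preserved at the regularised level. The elliptic regularisation is a higher-order term $\varepsilon\big(\sum_{|\alpha|=m}\partial^\alpha W\cdot\partial^\alpha\phi+W\cdot\phi\big)$ with $m>d/2$ acting on the entropy variable $W$, not a $\delta\Delta$ on $Z$. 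This distinction matters because the $L^2$-energy estimate for $u$ (Lemma~\ref{l:energy}) is performed only \emph{after} removing the $\varepsilon$-regularisation, since the latter interferes with the scalar structure of the heat equation.
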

	
	We note that, by density, the equation~\eqref{eq:00} holds true for a somewhat
	larger set of test functions $\phi$, and in case~\HHp\ the regularity of
	$\partial_tZ$ can be slightly improved. We should further note that in this work we have not aimed at optimising the regularity of the initial energy density $u_0$. The choice~$u_0\in L^2$ has been made for simplicity.
	
	At this stage we can comment on the major interplay between the choice of
	the entropy density $S$ in \eqref{eq:S} and the mobility matrix $\mathbb M$ in
	\eqref{eq:mobility}. At a formal level, we obtain along solutions the 
	conservation of energy and the
	entropy entropy-production balance:
	\begin{align*}
	&\mathcal{E}(c(t),u(t)) = \int_\Omega u(t,x)\, \dd x = \int_\Omega u^0(x)\,\dd x =
	\mathcal{E}(c^0,u^0), 
	\\
	&\mathcal{S}(c(t),u(t)) =\mathcal{S}(c^0,u^0) + \int_0^t \!\!\big( 
	\mathcal{P}_{\mathrm{diff}}(c(r),u(r))
	+\mathcal{P}_{\mathrm{react}}(c(r),u(r))\big) \dd r 
	\end{align*} with
	$\mathcal{P}_{\mathrm{diff}}(Z)=\int_\Omega \!\!\nabla Z :D^2S(Z)\mathbb{M}(Z) D^2S(Z)\nabla Z \,\dd x   \text{ and }   \mathcal{P}_{\mathrm{react}}(c,u)
	=\int_\Omega \!\! D_c S(c,u){\cdot} R(c,u) \,\dd x.$
	\label{page:Pdiff}
	Using $\mathcal{P}_{\mathrm{diff}}(Z)\geq 0$ and
	$\mathcal{P}_{\mathrm{react}}(Z)\geq 0$ we conclude
	$\mathcal{S}(Z(t))\geq \mathcal{S}(Z^0)$. Combining the trivial bound of $u(t)$
	in $L^1(\Omega)$ with the bounds \ref{it:H3} on $w_i$, this implies a uniform
	a priori bound for $\int_\Omega \lambda(c_i(t))\,\dd x $, see Lemma \ref{l:S}.
	However, the main difficulty in justifying eq.~\eqref{eq:00} is to show that the flux
	$A(Z)\nabla Z$ lies in $L^1_{\mathrm{loc}}([0,\infty){\times}\ol\Omega)$, based
	on the fact that the entropy production
	$\int_0^\infty \mathcal{P}_{\mathrm{diff}} (Z(r))\dd r$ is finite.  Because of
	$A(Z)=-\mathbb{M}(Z)D^2 S(Z)$, this means that a bound of the form 
	\begin{equation}
	\label{eq:1.Bound}
	|A(Z) \nabla Z|^{\widetilde s} \leq C\Big( 1 + |Z| + |u|^\nu + \nabla Z: D^2S(Z)\mathbb{M}(Z) D^2
	S(Z)\nabla Z\Big) 
	\end{equation}
	would be desirable for some $\widetilde s \geq 1$. The trivial case would be
	$\nu=1$; however, using the special structure
	\eqref{ReactionDiffusionTemperatureb} allows us to derive simple a priori bounds
	in $L^\nu(\Omega)$, $\nu>1$, as well. We refer to the estimates \eqref{eq:402.3} and
	\eqref{eq:402} in Lemma \ref{l:efl}.  
	At the end, the situation is somewhat more involved
	and we can exploit Galiardo--Nirenberg estimates as well, which will lead us
	to the dimension-dependent exponent $s= 1 + 1/(2d{+}1)$ in
	\eqref{eq:Th.est.flux}.

	\begin{remark}[Cross-diffusion between species]
		\label{rem:CrossEffect}
		In case~\HH, where $\kappa_{1,i}>0$ for all~$i$, Theorem~\ref{thm:main} can be
		extended to a situation where cross-diffusion between species does
		occur. Indeed, in this case the coercivity estimate for the entropy production
		in Lemma~\ref{l:EP} remains valid when replacing the Onsager matrix
		$\mathbb{M}$ by $\mathbb{M}+\tilde\mu\otimes\tilde\mu$, where the continuous function
		$\tilde\mu=(\tilde\mu_1,\dots,\tilde\mu_I,0)$ is supposed to satisfy
		$|\tilde\mu_i(c,u)|\lesssim c_i$ for all $i$.  In this case,
		$A_{ij}=\delta_{ij}\tfrac{m_i}{c_i}+\tilde\mu_i\tfrac{\tilde\mu_j}{c_j},
		i,j\in\{1,\dots,I\}$, implying that $|A_{ij}|\lesssim c_i$.  The additional
		thermodiffusion-type coefficient in front of $\nabla u$ in the $i$-th component
		is given by $-\tilde\mu_i\sum_{j=1}^I\tilde\mu_j\tfrac{w_j'(u)}{w_j(u)}$.  One
		can now see that, under the assumption $\kappa_{1,i}\gtrsim 1$, a flux bound of
		the form~\eqref{eq:402} can still be guaranteed.
	\end{remark}
	
	\begin{remark}[Cross terms in the energy equation]
		\label{rem:CrossEnergyEqn}
		It is possible in Theorem~\ref{thm:main} to allow for non-trivial, non-negative
		continuous coefficients $m_{I+1}(Z)$ in the definition of $\mathbb{M}$ (cf.~\eqref{eq:mobility})
		satisfying suitable growth
		conditions. Note that while non-trivial coefficients $m_{I+1}$ may alter heat
		conductivity, the Dufour coefficients remain unchanged since
		$\partial_uS(Z)=1/\theta$.  For general $m_{I+1}$, the energy equation
		in the $Z$-variables takes the form
		\begin{align*}
		\dot u & =\divv\bigg(\big(a(Z)-m_{I+1}\partial_u^2S(Z)\big)\nabla u+\sum_{j=1}^Id_j(Z)\nabla c_j\bigg),
		\end{align*}
		where $d_j(Z)=-m_{I+1}\tfrac{w_j'}{w_j}$. The non-negativity of $m_{I+1}$
		ensures that the coercivity bounds for the entropy production are preserved.
		Besides the entropy production estimate, the main a priori estimate for $u$ is
		the energy estimate~\eqref{eq:121} in Lemma~\ref{l:energy}.  As will become
		clear in the proof of Theorem~\ref{thm:main}, our analysis can deal with
		generalisations of estimate~\eqref{eq:121} of the form
		\begin{align}\label{eq:221}
		\epsilon_1\tau\int_\Omega a_\delta(Z)|\nabla u|^2\dd x+\int_\Omega u^2\,\dd x
		\leq \int_\Omega (u^{k-1})^2\,\dd x +C \tau\int_\Om P(Z)\,\dd x,
		\end{align}	
		provided $\epsilon_1>0.$      
		For instance, under the assumptions of Theorem~\ref{thm:main}, in case~\HH\ an
		a priori estimate of the form~\eqref{eq:221} can be obtained if
		$m_{I+1}(Z)\ge0$ satisfies the bound
		$\tfrac{w_j'}{w_j}\:m_{I+1}\lesssim \sqrt{\pi_1}\gamma+\sqrt{\pi_1\gamma}.$
		For the case~\HHp\ the corresponding inequality~\eqref{eq:221} can be obtained
		if $m_{I+1}\lesssim 1$. 
		(Some additional conditions may have to be imposed to make the full
		construction work.)
	\end{remark}
	
	\begin{remark}
		\label{rem:thm}
		The hypothesis $\kappa_{1,i}>0$ for all $i\in\{1,\dots,I\}$ in case~\HH\ is
		essential in our proof of Theorem~\ref{thm:main}.  If $\kappa_{1,i}=0$, entropy
		production bounds and Sobolev type estimates do not generally provide
		$L^2$-integrability of the concentrations, which we need in case~\HH\ in order
		to ensure at least $L^1$-integrability of the flux term associated with
		thermodiffusion (see Lemmas~\ref{l:EP} and~\ref{l:efl}).
	\end{remark}
	
	Our second result is motivated by the question of global existence in the
	absence of self-diffusion, i.e.\ in the case when $\kappa_{1,i}=0$ for all
	$i\in\{1,\dots,I\}$.
	In the setting of~\HH, choosing $\kappa_{1,i}=0$ leads to strong cross-diffusion effects and is not covered by Theorem~\ref{thm:main}.
	Here, entropy (and energy) estimates in general fail to ensure integrability of the thermodiffusive flux terms. We therefore use a weaker concept of solution similar to the notion of renormalised solutions introduced in~\cite{Fischer_2015_renormalized}, which allows us at the same time to drop the growth condition on the reaction rates in Theorem~\ref{thm:main}. Weak or no growth restrictions on $|R_i(\cdot)|$ are often desirable when interested in physically realistic reactions.
	The concept of renormalised solutions utilized
	in~\cite{Fischer_2015_renormalized} originates from the studies of DiPerna and
	Lions on the global existence of solutions to Boltzmann and transport equations
	\cite{DiPernaLions_1988_FokkerPlanckBoltzmann,
		DiPernaLions_1989_CauchyProblemBoltzmann,
		DiPernaLions_1989_ODETransportSobolevSpaces}. During the last decades,
	various notions of renormalised solutions have been employed in the literature;
	see e.g.\ \cite{DalMaso_etal_1999_RenormalizedSolutionsMeasureData,
		Desvillettes_etal_2007_GlobalExistQuadrSystems,
		Villani_1996_CauchyProblemLandauEquation}. In
	\cite{Desvillettes_etal_2007_GlobalExistQuadrSystems} the authors present some
	classes of mass-action kinetics models which admit global weak solutions for
	reaction rates with at most quadratic growth and which allow for global
	renormalised solutions with defect measure for at most quartic growth.
	
	As pointed out in Remark \ref{rem:thm}, we are generally lacking an $L^2$ a priori bound on $c_i$ in the case $\kappa_{1,i} = 0$. 
	For certain classes of models, $L^2$ bounds for the species can be obtained using duality estimates.
	See, e.g., the references~\cite{Pierre_2010_GlobalExistenceSurvey,DLM_2014_entropy,CanizoDesvillettesFellner_2014_ImprovedDualityEstimates,DesvillettesFellner_2015_DualityMethodsDegenerateDiffusion,LM_2017_entropic}, which include cross-diffusive models.
	However, without very specific assumptions on the structure of the diffusion operator such duality arguments are not applicable in our setting.
	
	Our definition of renormalised solutions adapts~\cite{Fischer_2015_renormalized,ChenJuengel_2019_renormalised}.
	
	\begin{definition}[Renormalised solution]
		\label{def:RenormSol}
		Let $J=(0,\infty)$ and let $Z_0=(c_0, u_0):\Om\to\mathbb{R}_{\ge0}^{I+1}$ be
		measurable. We call a function $Z=(c,u)$ with non-negative components a
		(global) renormalised solution of \eqref{eq:system} with initial data $Z^0$ if
		$Z_i\in L^2_\loc(\ol J; H^1(\Omega))$ or
		$\sqrt{Z_i}\in L^2_\loc(\ol J; H^1(\Omega))$ for each $i\in\{1,\dots, I+1\}$,
		if furthermore
		\begin{equation}\label{eq:2.100}
		\begin{split}
		\chi_{\{|Z|\le E\}}A(Z)\nabla Z\in L^2_\loc(\ol J; L^2(\Omega))
		\end{split}
		\end{equation}
		for all $E\ge 1$, and if for every $\xi \in C^\infty([0,\infty)^{I+1})$ with
		compactly supported derivative $D\xi$, every $T>0$, and every
		$\psi \in C^\infty([0, T] \times \ol \Omega)$ with $\psi(\cdot, T) = 0$ the
		following identity is satisfied:
		\begin{align}\label{eq:def.renorm}
		- \int_\Omega & \xi(Z^0) \psi(\cdot, 0) \,\dd x -
		\int_0^T\!\!\int_\Omega\!\!\xi(Z) \frac{d}{dt} \psi \, \dd x\dd t
		\\
		&=  - \sum_{i,j,k=1}^{I+1} \int_0^T\!\!\int_\Omega \psi \, \partial_i
		\partial_k \xi (Z) A_{ij}(Z)\nabla Z_j \cdot \nabla Z_k \, \dd x\dd t 
		\\ 
		& \quad -  \sum_{i,j=1}^{I+1} \int_0^T\!\!\int_\Omega   \partial_i \xi
		(Z)
		A_{ij}(Z)\nabla   Z_j \cdot   \nabla \psi \,   \dd x\dd t
		+ \sum_{i=1}^I \int_0^T\!\!\int_\Omega \psi \, \partial_i \xi (Z) R_i(Z) \, \dd x\dd t.
		\end{align}
	\end{definition}
	
	Our second main result assumes the following conditions.
	
	\begin{assumption}
		\label{mainhypo2}
		Let the~\ga\ be satisfied and assume hypotheses~\HH. Further suppose that
		$\kappa_{1,i}=0$ and $\kappa_{0,i}>0$ for all $i\in\{1,\dots, I\}$.
	\end{assumption}
	Note that Hypotheses~\ref{mainhypo2} do not impose any growth conditions on
	$R(c,u)$.  Let us further point out that while the restriction to the (arguably
	more interesting) case~$\kappa_{1,i}=0$ for all $i$ is not necessary and we
	could have equally treated Onsager matrices of the form considered in
	Theorem~\ref{thm:main}, our second result appears to require that either
	$\kappa_{1,i}>0$ for all~$i$ or $\kappa_{1,i}=0$ for all $i$ if one wants to
	admit reactions with arbitrarily fast growth.

	\begin{theorem}[Global existence of renormalised solutions]
		\label{theoremexistence}
		
		Let Hypotheses \ref{mainhypo2} hold true. Let $Z^0=(c^0, u^0)$ have
		non-negative components satisfying $c^0_i\in L\log L(\Omega)$,
		$i\in\{1,\dots,I\}$, $u^0\in L^2(\Omega)$ and
		$\hat\sigma_{0,-}(u^0)\in L^1(\Omega)$.  Then, there exists a global
		renormalised solution $Z=(c, u)$ to \eqref{eq:system} with initial data $Z^0$
		having the additional regularity
		\begin{align*}
		c_i &\in L^\infty(0, \infty; L \log L (\Omega)), \quad i \in \{1, \dotsc, I\}, \\
		u &\in L^\infty(0, \infty; L^2(\Omega)).
		\end{align*}
		For all $T > 0$, $u$ satisfies
		$\partial_t u \in L^s(0, T; W^{1,s'}(\Omega)^\ast)$ (with
		$s = \tfrac{2d+2}{2d+1}$, $s' = 2d+2$) and
		\[
		\int_0^T \big\langle \partial_t u, \phi \big\rangle \, \dd t + \int_0^T\!\!
		\int_{\Omega} a(c, u) \nabla u \cdot \nabla \phi \,\dd x\dd t = 0
		\]
		for all $\phi \in L^\infty(0, T; W^{1,\infty}(\Omega))$. Moreover, the internal
		energy is conserved, i.e.\ for all $t > 0$,
		\[
		\int_\Omega u(t,x) \, \dd x = \int_\Omega u^0(x) \, \dd x,
		\]
		and the following bounds are valid:
		\begin{align}
		\|c\|_{L^\infty(0,\infty;L\log L)} + \|u\|_{L^\infty(0,\infty;L^2)}
		+\int_0^\infty\!\!\int_\Omega P(c,u)\, \dd x \dd t + \int_0^\infty\!\!\int_\Om \pi_1
		\gamma |\nabla u|^2\,\dd x\dd t \le C(\mathrm{data}),
		\end{align}
		 where $P(c,u)$ is given by~\eqref{eq:1000}
		and $\mathrm{data} = (\|c^0\|_{L\log L}, \|u^0\|_{L^2}, \|\hat\sigma_{0,-}(u^0)\|_{L^1})$.
	\end{theorem}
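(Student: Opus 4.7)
The plan is to obtain the renormalised solution of Theorem~\ref{theoremexistence} by approximating the degenerate setting ($\kappa_{1,i}=0$) by the self-diffusive one ($\kappa_{1,i}^\ve=\ve>0$) while simultaneously truncating the reactions to subcritical growth, so that Theorem~\ref{thm:main} applies at each fixed $\ve>0$, and then sending $\ve\to 0$. Concretely, for each $\ve\in(0,1)$ I would replace $a_i(c,u)=\kappa_{0,i}$ by $a_i^\ve(c,u)\coleq\kappa_{0,i}+\ve c_i$ (so that~\ref{it:mi1} holds with $\kappa_{1,i}^\ve=\ve>0$) and replace $R$ by $R^\ve(Z)\coleq R(Z)/(1+\ve|R(Z)|)$. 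The latter is still continuous, satisfies $|R^\ve(Z)|\le 1/\ve$, and preserves the entropy-production condition~\eqref{eq:109} because it only rescales $R$ by a non-negative scalar; the growth restrictions in Hypotheses~\ref{mainhypo1} (case~\HH) are therefore trivially met. Theorem~\ref{thm:main} then produces a weak solution $Z^\ve=(c^\ve,u^\ve)$ with initial data $Z^0$ satisfying the bound~\eqref{eq:mainbd} with constants that depend on $\ve$ only through the $\kappa_{1,i}^\ve=\ve$ term.

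Next, I would extract the $\ve$-uniform portion of~\eqref{eq:mainbd}: the bounds on $c_i^\ve\in L^\infty(L\log L)$, $u^\ve\in L^\infty L^2$, $\hat\sigma_{0,-}(u^\ve)\in L^\infty L^1$, $\kappa_{0,i}|\nabla\sqrt{c_i^\ve}|^2$, $\pi_1\gamma^2|\nabla u^\ve|^2$ and $\pi_1\gamma|\nabla u^\ve|^2$ are all uniform in $\ve$. The $\ve|\nabla c_i^\ve|^2$-term only gives $\sqrt\ve\,\nabla c_i^\ve$ bounded in $L^2_{t,x}$, so any flux contribution of the form $\ve c_i^\ve\nabla c_i^\ve$ vanishes in the limit. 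For compactness of $u^\ve$ I would invoke the energy equation~\eqref{ReactionDiffusionTemperatureb}: the flux $a(c^\ve,u^\ve)\nabla u^\ve$ is uniformly bounded in some $L^\alpha_{t,x}$ via $\sqrt{\pi_1}\gamma\,\nabla u^\ve\in L^2$ and the $L^\infty L^2$-bound on $u^\ve$, giving $\partial_t u^\ve$ uniformly in $L^s(0,T;W^{1,s'}(\Omega)^\ast)$; Aubin--Lions then yields strong convergence of $u^\ve$ in $L^p_\loc$. For the concentrations I would run a renormalised compactness argument: for $\xi\in C^\infty$ with $D\xi$ compactly supported, the quantity $\partial_t\xi(Z^\ve)$ satisfies an evolution equation whose right-hand side is bounded in some negative Sobolev space (flux terms $\partial_i\xi(Z^\ve)A^\ve_{ij}(Z^\ve)\nabla Z_j^\ve$ are in $L^2$ on the support of $D\xi$ by~\eqref{eq:2.100}, the quadratic $D^2\xi$-term is in $L^1$ by the entropy production, and $\partial_i\xi(Z^\ve)R_i^\ve(Z^\ve)$ is $L^\infty$ since $Z^\ve$ is bounded on $\supp D\xi$). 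A diagonal argument across an exhausting sequence of such $\xi$'s gives a.e.\ convergence of $Z^\ve$ along a subsequence.

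Having $Z^\ve\to Z$ a.e.\ and the weak compactness of the relevant fluxes, I would pass to the limit in the renormalised identity~\eqref{eq:def.renorm}. Fix $\xi\in C^\infty([0,\infty)^{I+1})$ with $\supp D\xi\subset\{|z|\le E\}$. On $\{|Z^\ve|\le E\}$ the matrix $A^\ve$ is uniformly bounded, and estimate~\eqref{eq:2.100} (with a constant depending on $E$ only) follows for $Z^\ve$ from~\eqref{eq:mainbd} because $\sum_i \kappa_{0,i}|\nabla\sqrt{c_i^\ve}|^2$ controls $|\nabla c_i^\ve|^2$ on $\{c_i^\ve\le E\}$, and $\pi_1\gamma^2|\nabla u^\ve|^2$ controls $|\nabla u^\ve|^2$ on $\{u^\ve\le E\}$ via~\ref{it:H2}. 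The linear-in-gradient term $\partial_i\xi(Z^\ve)A_{ij}^\ve(Z^\ve)\nabla Z_j^\ve$ then converges weakly in $L^2_\loc$ (weak convergence of truncated gradients against a.e.-convergent bounded coefficients), and the quadratic-in-gradient term $\partial_i\partial_k\xi(Z^\ve)A_{ij}^\ve(Z^\ve)\nabla Z_j^\ve\cdot\nabla Z_k^\ve$ is equi-integrable in $L^1$ by the entropy-production bound and converges by Vitali's theorem to the analogous expression in $Z$. The reaction term passes to the limit by locally uniform convergence $R^\ve\to R$ and a.e.\ convergence of $Z^\ve$ on $\supp D\xi$. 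Energy conservation, non-negativity of components and the stated bounds are inherited from the uniform bounds and the $u$-equation.

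The main obstacle is the passage to the limit in the quadratic-gradient term $\partial_i\partial_k\xi\cdot A_{ij}\nabla Z_j\cdot\nabla Z_k$, which couples two objects that converge only weakly in $L^2$. The crucial point enabling this step is the renormalisation by $\xi$ with $\supp D\xi$ compact, which localises everything to the bounded region $\{|Z|\le E\}$ where the diffusion matrix is bounded and where the thermodiffusive flux $A_{i,I+1}=(-a_i+\pi_1\gamma)c_iw'_i/w_i$---which is \emph{not} a priori in $L^1_{t,x}$---is rendered harmless by multiplication with $\partial_i\xi$. Equi-integrability of the quadratic term then follows from the concavity of $S$ and the entropy-production bound $\nabla Z^\ve\colon D^2S(Z^\ve)\mathbb{M}(Z^\ve)D^2S(Z^\ve)\nabla Z^\ve\in L^1_{t,x}$. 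A secondary, but delicate, issue is verifying that the self-diffusive regularisation actually disappears in the limit: this uses that the extra flux contribution is $\ve c_i^\ve\nabla c_i^\ve$ times a bounded function, which converges to zero in $L^1_{t,x}$ since $\sqrt\ve\,\nabla c_i^\ve$ is uniformly $L^2$ and $\sqrt\ve\,c_i^\ve\to 0$ on the support of $D\xi$.
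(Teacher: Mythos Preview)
Your overall strategy---regularise by adding $\ve$-self-diffusion and truncating the reactions, invoke Theorem~\ref{thm:main} case~\HH, then send $\ve\to0$---is exactly what the paper does. However, the step you identify as the ``main obstacle'' is also where your argument breaks down. You assert that the quadratic term $\partial_i\partial_k\xi(Z^\ve)A_{ij}^\ve(Z^\ve)\nabla Z_j^\ve\cdot\nabla Z_k^\ve$ converges by Vitali's theorem, but Vitali requires a.e.\ convergence of the integrand in addition to equi-integrability. You only have $\nabla\sqrt{c_i^\ve}\rightharpoonup\nabla\sqrt{c_i}$ \emph{weakly} in $L^2$, so products like $\nabla c_j^\ve\cdot\nabla c_k^\ve$ do not converge a.e.\ and in general do not converge to $\nabla c_j\cdot\nabla c_k$. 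Equi-integrability from the entropy-production bound is not enough.

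The paper resolves this with two ingredients you are missing. First, it proves \emph{strong} convergence $\nabla u^\ve\to\nabla u$ in $L^2(\Omega_T)$ (not just of $u^\ve$), via matching $L^2$-energy identities for $u^\ve$ and the limit $u$; this handles all quadratic terms with at least one factor $\nabla u^\ve$, in particular the thermodiffusive cross terms. Second, for the remaining purely-concentration quadratic terms $\nabla c_j^\ve\cdot\nabla c_k^\ve$, the paper does \emph{not} pass to the limit directly in the $\xi$-equation. Instead it works with the specific truncations $\varphi_i^E$ from~\eqref{eqphiie}, passes to the limit $\ve\to0$ in the equation for $\varphi_i^E(Z^\ve)$ accepting a defect Radon measure $\mu_i^E$ (the weak-$\ast$ limit of these unidentified quadratic terms), and shows $|\mu_i^E|([0,T)\times\ol\Omega)\to0$ as $E\to\infty$ using the scaling property~\ref{it:C2}, $|Z||D^2\varphi^E(Z)|\le K_1$, together with~\ref{it:C7}. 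Only then is the approximate chain rule (Lemma~\ref{lemmachainrulefischer}) applied a second time to $\xi(\varphi^E(Z),Z_{I+1})$, and $E\to\infty$ kills the measure error. Your one-step limit in $\xi(Z^\ve)$ cannot work without these devices.
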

	
	For results on the global existence and the asymptotic behaviour of
	reaction--diffusion systems coupled to Poisson's equation, which is the typical
	setting for semiconductor models, we refer to
	\cite{Gajewski_Groeger_1996_ReactionDiffusionElectrically,
		GlitzkyGroegerHuenlich_1996_FreeEnergyDissipationRate,
		GlitzkyHuenlich_1997_ElectroReactionDiffusion}.  While uniqueness is in
	general a difficult question for coupled systems, weak-strong uniqueness of
	renormalised solutions to entropy-dissipating reaction-diffusion systems has
	been established in~\cite{Fischer_2017} in the weakly coupled case, and
	in~\cite{CJ_2019_wkstruni} for a class of population models featuring weak
	cross-diffusion.  Within the last couple of years, various kinds of entropy
	estimates have been successfully applied for studying the long-time behavior of
	reaction--diffusion equations and to typically obtain exponential convergence
	to equilibrium
	\cite{ArnoldMarkowichToscani_2000_LargeTimeDriftDiffusionPoisson,
		DesvillettesFellner_2006_ExponentialDecayViaEntropy,
		DesvillettesFellner_2007_EntropyMethods,
		DesvillettesFellner_2008_EntropyMethodsAPrioriBounds,
		DiFrancescoFellnerMarkowich_2008_EntropyDissipationInhomogeneous}.  Finally,
	we point out that exponential convergence to equilibrium has also been shown
	for renormalised solutions in the framework of detailed and complex-balanced
	chemical reaction networks \cite{DesvillettesFellnerTang_2017_ComplexBalance,
		FellnerTang_2017_DetailBalance,
		FellnerTang_2018_ConvergenceOfRenormalizedSolutions}.
	
	\smallskip
	
	\paragraph{\bf Strategy of the proof}
	\label{p:strategy}
	
	The main mathematical difficulty of the problem lies in the fact that the
	equilibria $w_i$ of the concentrations $c_i$ depend on the internal energy
	leading to a strong coupling in the entropy and in the associated evolution
	system.  Let us point out that for strongly coupled systems the design of
	suitable (structure-preserving) approximation schemes may in general be quite
	tricky, even if suitable formal a priori estimates are available.  Our main a
	priori estimates are obtained from the entropy structure of the coupled system
	together with the scalar-like structure of the heat equation (we choose $L^2$ for simplicity). Our approximation
	scheme for Theorem~\ref{thm:main} adapts ideas developed
	in~\cite{Juengel_2015_boundednes-by-entropy,ChenDausJuengel_2018_global} and
	references therein for reaction-diffusion systems with cross-diffusion. A key
	aspect of this scheme lies in a transformation to the so-called entropy
	variables, upon which the semi-definiteness of the mobility matrix $\mathbb{M}$
	is exploited to construct approximate solutions to an elliptically regularised
	problem. In our case, a slight complication arises due to the circumstance that
	regularisations in the entropy variables interfere with the $L^2$-structure of
	the heat equation. We by-pass this issue by suitable additional
	approximation/regularisation procedures.
	
	The proof of Theorem~\ref{theoremexistence} uses the global weak solutions
	obtained in Theorem~\ref{thm:main} as approximate solutions and adapts the
	construction of renormalised solutions to entropy-dissipating
	reaction-diffusion systems in~\cite{Fischer_2015_renormalized,
		ChenJuengel_2019_renormalised}. In these two references, the main difficulty
	comes from the lack of control of the reaction rates, whereas in our situation
	new difficulties arise owing to thermodiffusive cross effects. While the
	article~\cite{ChenJuengel_2019_renormalised} does consider cross-diffusion, it
	strongly relies on the presence of self-diffusion, and for suitably controlled
	reaction rates the existence of global-in-time weak solutions has previously
	been established in~\cite{ChenDausJuengel_2018_global}.  In contrast, in our
	situation a mere regularisation of the reaction rates is insufficient for
	constructing approximate weak solutions, which was one of our motivations for
	establishing Theorem~\ref{thm:main}, case~\HH.
	
	A new aspect of our approximation scheme is the idea of approximating the
	problem with vanishing self-diffusion and uncontrolled flux (treated in
	Theorem~\ref{theoremexistence}) by (thermodynamically consistent) models
	featuring self-diffusion (as considered in case~\HH\ of
	Theorem~\ref{thm:main}).  This construction relies on a second key ingredient:
	a stability result for the thermal part, namely the strong convergence of the
	gradient in $L^2$.  Such stability results for elliptic and parabolic equations
	are classical, and have been used, for instance, in the existence analysis of
	elliptic equations with measure data~\cite{BG_1992, BM_1992, BBGGPV_1995,
		DalMaso_etal_1999_RenormalizedSolutionsMeasureData}.
	
	\smallskip
	
	\paragraph{\bf Outline and notations}
	The remaining part of this paper is devoted to the proofs of
	Theorems~\ref{thm:main} and~\ref{theoremexistence}.  In
	Section~\ref{sec:entropy} we introduce the transformation to the entropy
	variables and a regularised entropy (Sec.~\ref{ssec:ev}), establish formal
	coercivity bounds for the entropy production as well as an estimate on the flux
	term.  In Section~\ref{sec:delta} we use elliptic methods and a fixed point
	theorem to construct a solution to a time-discrete nonlinear regularised
	system, remove the elliptic regularisation (Prop.~\ref{prop:eps=0}) and perform
	an $L^2$ estimate at the level of $u$ (Lemma~\ref{l:energy}). In
	Section~\ref{sec:time} we construct a global weak solution to the
	time-continuous problem.
	
	Some preliminary technical tools for proving Theorem \ref{theoremexistence} are gathered
	in Section \ref{sec:prelim2}. In particular, we provide a weak chain rule for
	the time derivative of truncated solutions. The construction of a global
	renormalised solution is carried out in Section \ref{sec:renormalised} by first
	proving some compactness properties to obtain a limiting candidate.
	This limit is then shown to be a global renormalised
	solution by deriving an approximate equation for appropriately truncated solutions and subsequently passing to the limit of infinite truncation height.
	Some auxiliary results from the literature are recalled in the
	appendix (see page~\pageref{sec:app}).
	
	\smallskip
	
	\small
	\paragraph{Notations} We use the following notations and conventions.
	\label{p:notations}
	\begin{itemize}
		\item For $s\in \mathbb{R}$, we use the convention $s_+=\max\{s,0\}$ and
		$s_-=\min\{s,0\}$ so that $s=s_++s_-$.  Moreover, we let $\hat\sigma_{0,-}$ denote the negative part of $\hat\sigma_{0}$.
		\item In the notation of $L^p$ and Sobolev spaces we usually do not explicitly
		state the underlying domain $\Omega$. For a Banach space $X$, we sometimes
		write $L^p(X)$ to denote the Bochner space $L^p(0,T;X)$.
		\item We abbreviate $\Omega_T:=(0,T)\times\Omega$.
		\item Unless otherwise stated, $\nabla$ denotes the gradient with respect to
		the space variable $x\in \Omega$, while for a general function
		$F(a_i,\dots,a_N)$ (in several variables) we denote by $DF$ its total
		derivative.
		\item The notation $A\lesssim B$ for non-negative quantities $A,B$ means that
		there exists a constant $C\in(0,\infty)$ (only depending on fixed parameters)
		such that $A\le CB$; $A\gtrsim B$ is defined as $B\lesssim A$, and by
		$A\sim B$ we mean that both $A\lesssim B$ and $A\gtrsim B$ hold true. 
		 For indicating a possible dependence of the constant $C$ in the estimate $A\le CB$ on a certain family of parameters $p$, we write $\lesssim_{p}$. Similar notations will be used for the relations $\gtrsim$ and $\sim$.
		\item Unless specified otherwise, $C$ denotes a positive constant that may
		change from line to line.
		\item We usually neither indicate nor mention the dependence of constants on
		fixed parameters such as the number $I$ of species or the space
		dimension. 
		\item $(u,v)_{L^2(\Omega)}=\int_\Omega uv\,\dd x$ denotes the standard
		$L^2(\Omega)$ inner product of $u,v\in L^2(\Omega)$.
		\item Given a Banach space $V$, $V^*$ denotes its topological dual and
		$\langle u,v\rangle_{V^*,V}$ the dual pairing between $u\in V^*$ and
		$v\in V$.
		\item For real matrices $A,B\in \mathbb{R}^{k\times l}$ we let
		$A:B=\sum_{i=1}^k\sum_{j=1}^lA_{ij}B_{ij}$.
		\item For a vector $\phi=(\phi_1,\dots,\phi_I,\phi_{I+1})\in \mathbb{R}^{I+1}$
		we let $\phi'=(\phi_1,\dots,\phi_I)$.
		\item Abbreviate $\ol\kappa\coleq \max_{i,j}\kappa_{j,i}\in(0,\infty)$. The
		dependence of our estimates on $\ol\kappa$ is typically not indicated since
		$\ol\kappa$ remains uniformly bounded throughout our analysis. To indicate
		this, we occasionally also write $\kappa_{j,i}\lesssim 1$.
	\end{itemize}
	\normalsize

	\section{Entropy tools}
	\label{sec:entropy}
	
	\subsection{Entropy variables}
	\label{ssec:ev}
	
	Here, we consider entropies of the form~\eqref{eq:S}, where the equilibria
	$w_i,i\in\{1,\dots,I\}$, are as specified in~\ref{it:w}, and
	$\hat\sigma \in C^2((0,\infty))$ is supposed to satisfy $\hat\sigma''(s)<0$ for
	all $s>0$.  This property ensures that the matrix $D^2S(Z)$ is negative
	definite for all $Z\in(\mathbb{R}_{_>0})^{I+1}$, see the proof
	of~\cite[Prop.~2.1]{MielkeMittnenzweig_2018_convergence}.  We can then define a
	change of variables to the so-called entropy variables
	\begin{equation}\label{eq:114}
	W:=\begin{pmatrix}y \\ v\end{pmatrix}:=-\begin{pmatrix}D_cS \\ \partial_uS\end{pmatrix}_{|(c,u)}=-DS(Z).
	\end{equation}
	The regularity and strict concavity property of the entropy density $S$ ensure that the transformation
	\begin{align}
	-DS:\quad \mathcal{U}:=(\mathbb{R}_{>0})^{I+1}\to -DS(\mathcal{U}), \qquad Z\mapsto -DS(Z)=W
	\end{align}
	is well-defined and invertible.  
	The choice of this transformation is motivated by the fact that 
	it allows us to rewrite the system
	$\dot Z = -\divv ( \mathbb{M}(Z) D^2 S(Z) \nabla Z)$
	as 
	\begin{align*}
	\dot Z = \divv \big( \mathbb{M}(Z)\nabla W \big), 
	\end{align*}
	where here $(c,u)=(-DS)^{-1}(y,v)$ is to be understood as a function of
	$(y,v)$.  Such a transformation has proved useful in the construction of
	solutions to cross-diffusion systems with a formal entropy structure (see
	e.g.~\cite{GGJ_2003,CJ_2004_strongcross,CJ_2006_noselfdiff,Dreher_2008_population,DLM_2014_entropy,Juengel_2015_boundednes-by-entropy,ChenDausJuengel_2018_global,ChenJuengel_2019_renormalised}).
	The main motivation for this transformation lies in the fact that, thanks to
	the positivity of $(-D^2S)$ and the semi-positivity of $\mathbb{M}$, the system
	in the entropy variables is parabolic (in the sense of~\cite{amann_1992}):
	letting $H=H(W)$ denote the Legendre transform of the convex function $-S$, we
	have $Z=DH(W)$, and hence the above system can be written as
	$\tfrac{\dd}{\dd t}(DH(W))=\divv(\mathbb{M}(Z)\nabla W)$ with convex
	$H$. 
	
	Definition~\eqref{eq:114} can be written more explicitly as
	\begin{align*}
	y_i & =\log\big(\tfrac{c_i}{w_i(u)}\big),\qquad v= -\hat\sigma'(u)-\sum_{i=1}^Ic_i\tfrac{w_i'(u)}{w_i(u)}.
	\end{align*}
	If the range of $-DS$ equals $\mathbb{R}^{I+1}$, we can compute for any given $W\in \mathbb{R}^{I+1}$
	\begin{align}\label{eq:307}
	Z=Z(W)=(-DS)^{-1}(W).
	\end{align}
	In fact, the densities $c_i,u$ can be  
	recovered from  $y_i,v$ fairly explicitly:
	substituting $\frac{c_i}{w_i(u)}$ for $\exp(y_i)=\frac{c_i}{w_i(u)},$ we deduce
	$v= -\hat\sigma'(u)-\sum_{i=1}^I\exp(y_i)w_i'(u).$
	Thanks to strict concavity, for given $y\in\mathbb{R}^I$ the function 
	\begin{align}\label{eq:0psi}
	(0,\infty)\ni s\mapsto\Psi(s,y):=-\hat\sigma'(s)-\sum_{i=1}^I\exp(y_i)w_i'(s)
	\end{align}
	is strictly increasing. Denoting by $\Phi(\cdot,y)$ its inverse, we have $u=\Phi(v,y)$ and
	\begin{align}
	c_i = w_i(u)\exp(y_i) = w_i(\Phi(v,y))\exp(y_i).
	\end{align}
	
	In our application (where $S=S_0$ as in~\ga), 
	the range of $-DS_0$ equals $\mathbb{R}^I\times\mathbb{R}_{<0}$
	(cf.~\ref{it:sig1}) and thus fails to coincide with $\mathbb{R}^{I+1}$. To
	circumvent this issue we introduce an approximation $S_\delta$ of $S_0$ such
	that im$(DS_\delta)=\mathbb{R}^{I+1}$. This can be achieved by setting
	\begin{align}
	\label{eq:Seps}
	S_\delta(c, u) = S_0(c,u)-\delta\lambda(u)\qquad\delta\in(0,1],
	\end{align}
	where $\lambda\ge0$ is as in~\eqref{eq:deflb}. 
	Since im$(\lambda')=\mathbb{R}$,
	$-DS_\delta:\mathbb{R}^{I+1}_{>0}\mapsto\mathbb{R}^{I+1}$ is indeed onto.
	Similar approximations have been used in the analysis of population
	models~\cite{DLM_2014_entropy, Juengel_2015_boundednes-by-entropy,
		ChenDausJuengel_2018_global} to deal with sublinear transition rates.  For
	the choice~\eqref{eq:Seps} of the entropy, the associated function
	$\Phi=\Phi_\delta$ is locally bounded, i.e.
	$(0\le)\Phi_\delta(v,y)\le C(|(v,y)|)$ for suitable $C=C_\delta$ and hence
	\begin{align}\label{eq:310}
	0\le Z_i(W)\le C(|W|)
	\end{align}
	for all $i\in\{1,\dots,I+1\}$.  Here, we can see another advantage of this
	approach: non-negativity of the original variables is guaranteed by
	construction. Finally, note that since $D_cS_0=D_cS_\delta$, the Lyapunov type
	hypothesis~\eqref{eq:109} is preserved
	\begin{align}\label{eq:109b}
	D_cS_\delta(c,u)\cdot R(c,u)\ge0\quad \text{ for all }(c,u)\in(0,\infty)^{I+1}.
	\end{align}

	\subsection{A priori estimates}\label{ssec:ep}
	
	Here, we gather a class of a priori estimates fundamental to our problem. Since
	we want to apply these estimates also to certain regularised models to gain
	control on our approximate solutions, the results will be stated in a somewhat
	more general form.

	\subsubsection{Entropy production estimates}\label{sssec:algQ}
	
	\begin{lemma}\label{l:EP}
		Let $S$ be of the form~\eqref{eq:S} for some strictly concave function
		$\hat\sigma\in C^2((0,\infty))$ and equilibrium functions $w_i$ as in
		hp.~\ref{it:w}. Let $\mathbb{M}$ be given by~\eqref{eq:mobility} and
		fulfill~\ref{it:mi1} and~\ref{it:pi}. Further suppose hypotheses~\HH\ or,
		alternatively, suppose hypothesis~\HHp\ with $\kappa_{1,i}=0$ for all $i$.
		Abbreviate $M(Z):=D^2S(Z)\mathbb{M}(Z)D^2S(Z)$.  There exists $\epsilon>0$
		depending only on fixed parameters such that for all
		$\zeta:=(\xi,\eta)\in\mathbb{R}^{I+1}$ and all
		$(c,u)\in \mathbb{R}_{\ge0}^{I+1}$
		\begin{equation}
		\zeta^TM(c,u)\zeta \ge\epsilon\Big[\sum_{i=1}^I\Big(\kappa_{1,i}|\xi_i|^2+\kappa_{0,i}|\tfrac{1}{\sqrt{c_i}}\xi_i|^2\Big)
		+	\pi_1\gamma^2|\eta|^2\Big],
		\end{equation}
		where, as before,  $\gamma(c,u):=-\big(\hat\sigma''(u) +\sum_{l=1}^I\tfrac{w_l''}{w_l}c_l\big)$.
	\end{lemma}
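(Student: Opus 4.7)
The plan is to reduce $\zeta^T M(c,u)\zeta$ to an explicit sum of squares by exploiting the entropy-conjugate structure, and then to absorb the resulting cross term via a scalar pointwise bound on $\sum_i m_i(w_i'/w_i)^2$ in terms of $\pi_1\gamma^2$. Writing $\tilde\zeta := D^2S(c,u)\zeta =: (\tilde\xi,\tilde\eta)$ and using the explicit structure of $D^2 S$ (diagonal entries $-1/c_i$ for $i\in\{1,\dots,I\}$, off-diagonal entries $w_i'/w_i$ in the $(i,I+1)$ slots, and $(I+1,I+1)$-entry $\partial_u^2 S$ given by~\eqref{eq:Suu}), one finds
\[
\tilde\xi_i = -\tfrac{\xi_i}{c_i}+\tfrac{w_i'}{w_i}\eta, \qquad \tilde\eta = \sum_{i=1}^I\tfrac{w_i'}{w_i}\xi_i+\partial_u^2 S\cdot\eta.
\]
The block structure $\mathbb M = \mathrm{diag}(m_1,\dots,m_I,0)+\pi_1\mu\otimes\mu$ then yields $\zeta^T M\zeta = \sum_{i=1}^I m_i\tilde\xi_i^2+\pi_1(\mu\cdot\tilde\zeta)^2$. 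The key algebraic step is a cancellation inside $\mu\cdot\tilde\zeta$: the contribution $\eta\sum_i c_i(w_i'/w_i)^2$ produced by the summands $\mu_i\tilde\xi_i$ ($i\le I$) exactly cancels the corresponding piece of $\partial_u^2 S$ appearing in $\mu_{I+1}\tilde\eta$, leaving $\mu\cdot\tilde\zeta = (\hat\sigma''(u)+\sum_i c_i w_i''/w_i)\eta = -\gamma\eta$. Therefore
\[
\zeta^T M(c,u)\zeta = \sum_{i=1}^I m_i\Big(\tfrac{\xi_i}{c_i}-\tfrac{w_i'}{w_i}\eta\Big)^2+\pi_1\gamma^2\eta^2.
\]

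By~\ref{it:mi1}, $m_i/c_i^2\sim \kappa_{0,i}/c_i+\kappa_{1,i}$, so the right-hand side of the asserted inequality is equivalent (up to constants) to $\epsilon\sum_i m_i(\xi_i/c_i)^2+\epsilon\pi_1\gamma^2\eta^2$. Expanding the square in the identity above and controlling the cross term by $|2ab|\le Aa^2+A^{-1}b^2$ reduces matters to producing a constant $C$ independent of $(c,u)\in\mathbb{R}_{\ge 0}^{I+1}$ such that
\[
\sum_{i=1}^I m_i\Big(\tfrac{w_i'}{w_i}\Big)^2 \le C\,\pi_1\gamma^2;
\]
given such $C$, any sufficiently small $\epsilon>0$ (depending only on $C$) works via an elementary Schur-complement argument. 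Establishing this pointwise absorption bound is the main obstacle, and I would treat it case by case using $m_i\sim\kappa_{0,i}c_i+\kappa_{1,i}c_i^2$.

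Under~\HH, for the $\kappa_{0,i}c_i$ part, multiplying hypothesis~\ref{it:H4} by $w_i'/w_i\ge 0$ and invoking~\ref{it:H5} yields $(w_i'/w_i)^2\lesssim -w_i''/w_i$, so $c_i(w_i'/w_i)^2\lesssim c_i(-w_i''/w_i)\le\gamma$; hypothesis~\ref{it:H2} then upgrades this to $\gamma\lesssim\pi_1\gamma^2$. For the $\kappa_{1,i}c_i^2$ part, squaring~\ref{it:H4} gives $(w_i'/w_i)^2\lesssim\pi_1(w_i''/w_i)^2$, whence $c_i^2(w_i'/w_i)^2\lesssim\pi_1(c_i w_i''/w_i)^2\le\pi_1\gamma^2$, where in the last step each summand $-c_j w_j''/w_j$ is dominated by $\gamma$. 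Under~\HHp\ (where $\kappa_{1,i}=0$) only the linear part remains; hypothesis~\ref{it:w.m3} directly gives $(w_i'/w_i)^2\lesssim -w_i''/w_i$ and hence $c_i(w_i'/w_i)^2\lesssim\gamma$, while~\ref{it:pi.m3} yields $\gamma\sim\pi_1\gamma^2$. In both cases the required pointwise bound holds, completing the proof.
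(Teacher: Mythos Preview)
Your proof is correct and follows essentially the same route as the paper: both derive the identity $\zeta^T M\zeta=\sum_i m_i(\xi_i/c_i-\tfrac{w_i'}{w_i}\eta)^2+\pi_1\gamma^2\eta^2$, then absorb the cross term by establishing the pointwise bound $\sum_i m_i(w_i'/w_i)^2\lesssim\pi_1\gamma^2$. Your treatment of the $\kappa_{0,i}c_i$ part under~\HH\ (multiplying~\ref{it:H4} by $w_i'/w_i$ and invoking~\ref{it:H5} to get $(w_i'/w_i)^2\lesssim -w_i''/w_i$ directly) is a slightly cleaner variant of the paper's argument, which instead bounds $|\sqrt{c_i}\tfrac{w_i'}{w_i}\eta|^2$ by $|\pi_1^{1/2}\tfrac{w_i''}{w_i}c_i\eta|^2+|\pi_1^{1/2}\gamma\eta|^2$; but the underlying ideas coincide.
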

	\begin{proof}We compute
		\begin{align}\nonumber
		\zeta^TM(c,u)\zeta & = \sum_{i=1}^{I} m_i|\tfrac{1}{c_i}\xi_i-\tfrac{w_i'}{w_i}\eta|^2+
		\pi_1\Big(\hat\sigma''(u) +\sum_{i=1}^I\tfrac{w_i''}{w_i}c_i\Big)^2|\eta|^2,
		\end{align}
		where $w_i=w_i(u),\pi_1=\pi_1(c,u)$.
		
		Using the bound $|n+\tilde n|^2\ge\tfrac{1}{2}|n|^2-|\tilde n|^2$, we estimate
		\begin{align}
		m_i|\tfrac{1}{c_i}\xi_i-\tfrac{w_i'}{w_i}\eta|^2 & \ge
		\tfrac{1}{2}m_i|\tfrac{1}{c_i}\xi_i|^2-m_i|\tfrac{w_i'}{w_i}\eta|^2.
		\end{align}
		By~\ref{it:mi1}, we have on the one hand
		\begin{equation*}
		m_i|\tfrac{1}{c_i}\xi_i|^2\gtrsim \kappa_{1,i}|\xi_1|^2+\kappa_{0,i}|\tfrac{1}{\sqrt{c_i}}\xi_i|^2.
		\end{equation*}
		On the other hand, supposing~\HHp\ with $\kappa_{1,i}=0$ for all $i$ allows us to estimate
		\begin{equation*}
		\sum_{i=1}^I m_i|\tfrac{w_i'}{w_i}\eta|^2\lesssim -\sum_{i=1}^I\kappa_{0,i}\tfrac{w''_i}{w_i}c_i|\eta|^2 
		\lesssim \pi_1\gamma^2|\eta|^2\le \zeta^TM(c,u)\zeta,
		\end{equation*}
		while under Hypotheses~\HH\ we find using~\ref{it:H4} resp.~\ref{it:H5} and~\ref{it:H2}
		\begin{align}
		|c_i\tfrac{w_i'}{w_i}\eta|^2 & \lesssim|\pi_1^\frac{1}{2}\tfrac{w_i''}{w_i}c_i\eta|^2,
		\\	|\sqrt{c_i}\tfrac{w_i'}{w_i}\eta|^2&\lesssim|\pi_1^\frac{1}{2}\tfrac{w_i''}{w_i}c_i\eta|^2
		+|\pi_1^\frac{1}{2}\gamma\eta|^2,
		\end{align}
		where the right-hand sides of the last two estimates are bounded above by~$|\pi_1^\frac{1}{2}\gamma\eta|^2$. 
		
		By our convention that $\max_{j,i}\kappa_{j,i}=\ol\kappa\lesssim1$, this completes the proof of the assertion.
	\end{proof}
	
	Let us now suppose that $Z=(c_1,\dots,c_I,u)$ is a measurable function and that the gradients $\nabla Z$ and $\nabla W$ are defined in a suitable sense, where $W=-DS(c,u)$.
	Then, defining (formally)
	\begin{equation}\label{eq:001i}
	\begin{split}
	&Q(c,u):=\nabla W:(\mathbb{M}(Z)\nabla W), \text{ where }W=-DS(c,u),
	\\\quad&\mathcal{Q}(c,u):=\int_\Omega Q(c,u)\,\dd x, 
	\end{split}
	\end{equation} and (cf.~\eqref{eq:1000})
	\begin{equation}\label{eq:401}
	\begin{split}
	P(c,u)&:=\sum_{i=1}^I\Big(\kappa_{1,i}|\nabla c_i|^2
	+\kappa_{0,i}|\nabla \sqrt{c_i}|^2\Big)
	+\pi_1\gamma^2|\nabla u|^2,
	\\\mathcal{P}(c,u)&:=\int_\Om P(c,u)\,\dd x,
	\end{split}
	\end{equation}
	Lemma~\ref{l:EP} implies the existence of $\epsilon_*>0$ such that
	\begin{align}\label{eq:400}
	\epsilon_* \mathcal{P}(c,u)\le \mathcal{Q}(c,u).
	\end{align}
	(Observe that $\mathcal{Q}$ agrees with $\mathcal{P}_{\mathrm{diff}}$ on page~\pageref{page:Pdiff}.)
	\subsubsection{Entropy and flux bounds}
	\label{suu:EntFluxBd} 
	To derive an upper bound for $c_i$ using the total entropy
	$\mathcal{S}(c,u)=\int_\Omega S(c,u) \,\dd x$, 
	we define for $p>1$ the $p$-entropy function
	$U_p(w)=\frac1{p(p{-}1)} \big( w^p - p w + p-1\big) \geq 0$, which is
	characterised by $U''_p(w)=w^{p-2}$ and $U_p(1)=U'_p(1)=0$. An elementary calculation shows that  for all $p>1$ and all $c\ge 0,w> 0$
	\begin{align}
	w \,\lambda \big(\tfrac cw \big) \ =	
	\ \tfrac{p{-}1}p \lambda (c) - (p{-}1) U_p(w) +\tfrac1p w^p\lambda(\tfrac c{w^p}),
	\end{align}
	where  as before $\lambda(z)=z \log z - z +1$. 
	As a consequence,
	\begin{equation}
	\label{eq:ERE.Bz}
	w \,\lambda \big(\tfrac cw \big) \ \geq \ \tfrac{p{-}1}p \lambda (c) - (p{-}1)
	U_p(w)  \ \geq \ \tfrac{p{-}1}p \lambda (c) - \tfrac1p\, w^p - 1,
	\end{equation}
	where the second inequality follows from the positivity of $w$ and the fact that $p>1$.
	\begin{lemma}[Upper and lower entropy bounds]
		\label{l:S} 
		Let $S=S(c,u)$ have the form~\eqref{eq:S} and satisfy
		Hypotheses~\ref{mainhypo1} with $S=S_0$, $\hat\sigma=\hat\sigma_0$, and
		$0<w_0\leq w_i(u)\leq C_w^\beta(1{+}u)^\beta$, where $\beta\in (0,1)$ is from \ref{it:H3}. Then, 
		for all $(c,u)\in[0,\infty)^{I+1}$,
		\begin{align}
		\label{eq:112ptw}
		S(c,u)&\leq \hat\sigma(u)  + 2I\beta C_w u + I(2\beta C_w{+}1) -(1{-}\beta)
		\sum_{i=1}^I \lambda(c_i) ,
		\\
		\label{eq:112ptwii} 
		S(c,u)&\ge \hat\sigma(u)+I- \frac I{\min\{w_0,1\}} -2\sum_{i=1}^I \lambda(c_i) .
		\end{align}
	\end{lemma}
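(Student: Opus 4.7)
The two bounds are derived by purely pointwise estimates on the entropy density $S(c,u)$, invoking only two elementary convex-duality inequalities for the Boltzmann function $\lambda$ from~\eqref{eq:deflb}: the relative-entropy estimate~\eqref{eq:ERE.Bz} already recalled in the paper, and its Young--Fenchel counterpart
\[
c\,a \ \leq \ \lambda(c) + e^a - 1 \qquad \text{for all } c \geq 0,\ a \in \mathbb{R},
\]
which is immediate from the Legendre identity $\lambda^*(a) = e^a - 1$.

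For the upper bound~\eqref{eq:112ptw} the plan is to use the decomposition $S(c,u) = \hat\sigma(u) + \sum_{i=1}^I(w_i(u) - 1) - B(c|w(u))$, which follows from the relation $\hat\sigma(u) = \sigma(u) - \sum_i w_i(u) + I$ introduced right after~\eqref{eq:S}. Applying~\eqref{eq:ERE.Bz} with $p := 1/\beta$, where $\beta \in (0,1)$ is from hypothesis~\ref{it:H3}, yields
\[
B(c|w(u)) \ \geq \ (1-\beta)\sum_{i=1}^I \lambda(c_i) - \beta\sum_{i=1}^I w_i(u)^{1/\beta} - I.
\]
The growth bound $w_i(u) \leq C_w^\beta(1+u)^\beta$ combined with the elementary Young inequality $a^\beta \leq \beta a + (1-\beta)$ applied to $a := C_w(1+u)$ then gives $w_i(u) \leq \beta C_w(1+u) + (1-\beta)$ and $w_i(u)^{1/\beta} \leq C_w(1+u)$. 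Collecting contributions and simplifying yields the claim.

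For the lower bound~\eqref{eq:112ptwii} I would work directly from~\eqref{eq:S} and apply the Young--Fenchel inequality above with $a := -\log w_i(u)$. Since $w_i(u) \geq w_0 > 0$, one has $e^{-\log w_i(u)} = 1/w_i(u) \leq 1/\min(w_0, 1)$ uniformly in $u$ (as is immediate by distinguishing $w_i \geq 1$ and $w_i < 1$), hence the pointwise estimate
\[
c_i \log w_i(u) \ \geq \ -\lambda(c_i) + 1 - \frac{1}{\min(w_0, 1)}.
\]
Subtracting $\lambda(c_i)$, summing over $i \in \{1,\dots,I\}$, and adding $\hat\sigma(u)$ directly yields~\eqref{eq:112ptwii}.

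No step in this argument is expected to be a real obstacle: both bounds are pointwise and reduce to elementary convex-duality estimates for $\lambda$ together with the growth and positivity assumptions on the equilibria $w_i$. The only technical care is in bookkeeping the constants, in particular in verifying that $\sum_i w_i(u) + \beta\sum_i w_i(u)^{1/\beta}$ is dominated by $2I\beta C_w u + I(2\beta C_w + 1 - \beta)$, which is a straightforward consequence of Young's inequality.
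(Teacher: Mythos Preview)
Your proposal is correct and follows essentially the same approach as the paper: for the upper bound you use the relative-entropy estimate~\eqref{eq:ERE.Bz} with $p=1/\beta$ together with the Young bound $a^\beta\le\beta a+(1{-}\beta)$ on $w_i(u)$, exactly as in the paper; for the lower bound you use the Young--Fenchel inequality $ca\le\lambda(c)+e^a-1$ with $a=-\log w_i(u)$, which is the same inequality the paper invokes. The only cosmetic difference is that for the lower bound the paper first replaces $w_i(u)$ by $w_0$ and then splits into the cases $w_0\ge1$ and $w_0<1$, whereas you apply Young--Fenchel directly and absorb the case distinction into the bound $1/w_i(u)\le1/\min(w_0,1)$; both yield the same estimate.
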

	\begin{proof}
		The upper estimate follows by applying \eqref{eq:ERE.Bz} with
		$p=1/\beta>1$ and adding over $i=1,\ldots,I$, namely
		\begin{align*}
		S(c,u)& \le \sigma(u)-\sum\nolimits_1^I\big(\tfrac{p{-}1}p \lambda(c_i)- (p{-}1)
		U_p(w_i(u))\big)
		\\
		&\le \hat\sigma(u) -I+\sum\nolimits_1^Iw_i(u) - \sum_1^I\big((1{-}\beta) \lambda(c_i)
		-\beta C_w(1{+}u)-1 \big) ,
		\end{align*}
		which gives \eqref{eq:112ptw} if we invoke $w_i(u)\leq \beta
		C_w(1{+}u)+1-\beta$.  
		
		The lower estimate~\eqref{eq:112ptwii} is obtained by using $w_i(u)\geq w_0>0$, which leads to 
		\begin{align*}
		S(c,u)& \geq \hat\sigma(u)+\sum\nolimits_1^I\big(c_i \log w_0 -\lambda(c_i)\big).
		\end{align*}	
		For $w_0\geq 1$ we may simply drop the term $c_i\log w_0\geq 0$ and obtain
		\eqref{eq:112ptwii}. For $w_0 \in (0,1)$ we use the the Young--Fenchel inequality
		\[
		-c_i \log w_0 = c_i \log\big(\tfrac1{w_0}\big) \leq \lambda(c_i) +
		\lambda^*\big( \log\big(\tfrac1{w_0}\big)\big)= \lambda(c_i) + \tfrac1{w_0}-1,
		\] 
		where $\lambda^*$ denotes the Legendre transform of $\lambda$.
		Estimate~\eqref{eq:112ptwii} now easily follows.  
	\end{proof} 
	
	In the following lemma, we establish general bounds on the flux term. Before
	stating the assertion, let us recall our convention that~$0\le
	\kappa_{j,i}\lesssim 1$ for all $j,i$. 
	
	\begin{lemma}[Control of the flux]
		\label{l:efl}
		Let $A(Z):=-\mathbb{M}(Z)D^2S(Z)$, where $S$ is of the general
		form~\eqref{eq:S} for some strictly concave function
		$\hat\sigma\in C^2((0,\infty))$ and equilibrium functions $w_i$ as in
		hp.~\ref{it:w}. Let $\mathbb{M}$ have the form~\eqref{eq:mobility}
		with~\ref{it:mi1},~\ref{it:pi} and~\ref{it:H6}, and assume that $Z=(c,u)$ has
		non-negative components and is such that $P(c,u)$ given by~\eqref{eq:401} is a
		well-defined function.
		
		If~\HHp\ holds true and $\kappa_{1,i}=0$ for all $i$, then
		\begin{equation}\label{eq:402.3}
		\begin{split}
		|A(Z)\nabla Z|&\lesssim \max_{i=1,\dots,I}\Big(
		\sqrt{c_i}+\sqrt{\pi_1(Z)}\Big) P^\frac{1}{2}(Z).
		\end{split}
		\end{equation}
		If instead hypotheses~\HH\ are fulfilled, then
		\begin{equation}\label{eq:402}
		\begin{split}
		|A(Z)\nabla Z|&\lesssim \max_{i=1,\dots,I}\Big(c_i
		+\kappa_{0,i}+\sqrt{\pi_1(Z)}\Big) P^\frac{1}{2}(Z).
		\end{split}
		\end{equation}
		Thus, in both cases, for any $\xi\in C([0,\infty)^{I+1})$ with
		$\supp\xi\subseteq \{|Z|\le E\}$
		\begin{equation}\label{eq:105b}
		|\xi(Z)A(Z)\nabla Z|\lesssim (E+1) P^\frac{1}{2}(Z).
		\end{equation}
	\end{lemma}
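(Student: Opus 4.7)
The plan is to compute each component of $A(Z)\nabla Z$ explicitly via the formulae in~\eqref{eq:defb*} and to bound them individually against $P(Z)^{1/2}$ with the claimed prefactors. From~\eqref{eq:defb*} one reads off
\[
(A(Z)\nabla Z)_{I+1}=\pi_1\gamma\,\nabla u,\qquad (A(Z)\nabla Z)_i=a_i\,\nabla c_i+c_i\tfrac{w_i'}{w_i}(\pi_1\gamma-a_i)\,\nabla u\quad (i\le I).
\]
The $(I+1)$-th component is immediate: the factorisation $\pi_1\gamma|\nabla u|=\pi_1^{1/2}\cdot(\pi_1^{1/2}\gamma|\nabla u|)$ yields $\sqrt{\pi_1}\,P^{1/2}$ and thereby accounts for the $\sqrt{\pi_1}$ contribution in both~\eqref{eq:402.3} and~\eqref{eq:402}.

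For $i\le I$ the pure diffusion piece $a_i\nabla c_i$ is handled componentwise via~\ref{it:mi1}. Splitting $a_i\sim\kappa_{0,i}+\kappa_{1,i}c_i$, writing $\kappa_{0,i}|\nabla c_i|=2\kappa_{0,i}\sqrt{c_i}|\nabla\sqrt{c_i}|$, and using $2\sqrt{\kappa_{0,i}}\sqrt{c_i}\le \kappa_{0,i}+c_i$ gives $\kappa_{0,i}|\nabla c_i|\lesssim (c_i+\kappa_{0,i})P^{1/2}$, while $\kappa_{1,i}c_i|\nabla c_i|\le c_i\sqrt{\kappa_{1,i}|\nabla c_i|^2}\lesssim c_iP^{1/2}$ (only present under~\HH). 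Under~\HHp\ with $\kappa_{1,i}=0$, the bound further improves to $\kappa_{0,i}|\nabla c_i|\lesssim \sqrt{c_i}P^{1/2}$ since $\sqrt{\kappa_{0,i}}\lesssim 1$.

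The main obstacle is the cross-diffusion contribution $c_i\tfrac{w_i'}{w_i}(\pi_1\gamma-a_i)\nabla u$, for which the two hypothesis sets require rather different strategies. Under~\HH, the decisive input is~\ref{it:H5} ($\sqrt{\pi_1}\tfrac{w_i'}{w_i}\lesssim 1$), which rewrites
\[
\pi_1\gamma c_i\tfrac{w_i'}{w_i}|\nabla u|=c_i\,(\sqrt{\pi_1}\tfrac{w_i'}{w_i})\,(\sqrt{\pi_1}\gamma|\nabla u|)\lesssim c_i P^{1/2};
\]
the remaining $a_i c_i\tfrac{w_i'}{w_i}|\nabla u|$-piece is then controlled by~\ref{it:H4} combined with the pointwise inequality $\gamma\ge -c_iw_i''/w_i$ (yielding $c_i\tfrac{w_i'}{w_i}\lesssim\sqrt{\pi_1}\gamma$) for the $\kappa_{1,i}c_i$-part, and by~\ref{it:H2} ($\pi_1\gamma\gtrsim 1$) together with~\ref{it:H5} for the $\kappa_{0,i}$-part. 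Under~\HHp, by contrast, the key pointwise bound $c_i(w_i'/w_i)^2\lesssim\gamma$ follows from~\ref{it:w.m3} and $\gamma\ge -c_iw_i''/w_i$; combining it with~\ref{it:pi.m3} ($\pi_1\gamma\sim 1$) turns every occurrence of $c_i\tfrac{w_i'}{w_i}|\nabla u|$ into $\lesssim\sqrt{c_i}\,P^{1/2}$.

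Summing over the $I+1$ components and taking the maximum yields~\eqref{eq:402} and~\eqref{eq:402.3}. The final assertion~\eqref{eq:105b} then follows immediately: on $\{|Z|\le E\}$ one has $c_i\le E$ and, by~\ref{it:H6}, $\sqrt{\pi_1}\lesssim 1+u\le 1+E$, while $\kappa_{0,i}\lesssim 1\le 1+E$, so that every prefactor appearing in~\eqref{eq:402}--\eqref{eq:402.3} is absorbed into $E+1$.
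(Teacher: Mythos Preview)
Your proof is correct and follows essentially the same route as the paper: both arguments compute the entries of $A(Z)\nabla Z$ from~\eqref{eq:defb*}, bound $A_{I+1,I+1}\nabla u$ by the factorisation $\pi_1\gamma=\pi_1^{1/2}\cdot\pi_1^{1/2}\gamma$, bound $a_i\nabla c_i$ via~\ref{it:mi1}, and control the cross term $A_{i,I+1}\nabla u$ using~\ref{it:H4}--\ref{it:H5} under~\HH\ (resp.~\ref{it:w.m3} and $\gamma\ge -c_iw_i''/w_i$ under~\HHp). The only cosmetic difference is that the paper applies~\ref{it:H4} uniformly to the full $a_i$ factor in the cross term, whereas you split into $\kappa_{1,i}c_i$- and $\kappa_{0,i}$-parts and invoke~\ref{it:H2} for the latter; both give the same final prefactors.
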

	\begin{proof}
		We first note that
		\begin{equation}\label{eq:101c}
		\begin{split}
		|A_{ii}(c,u)\nabla c_i|&\lesssim \kappa_{1,i}c_i|\nabla c_i|+\kappa_{0,i}\sqrt{c_i}|\nabla \sqrt{c_i}|
		\\&\lesssim \big(\kappa_{1,i}^\frac{1}{2}c_i
		+\kappa_{0,i}^\frac{1}{2}\sqrt{c_i}\big)P^\frac{1}{2}(Z).
		\end{split}
		\end{equation}
		Let us next turn to the coefficients $A_{i,I+1}(Z)$ (given by~\eqref{eq:defb*}). Supposing~\HHp, we recall  that $\kappa_{1,i}=0$ in this case, and
		estimate using~\ref{it:w.m3} and~\ref{it:pi.m3}
		\begin{equation}\label{eq:101.3}
		\begin{split}
		|A_{i,I+1}(Z)| 	&\lesssim \kappa_{0,i}\sqrt{c_i}\big(\tfrac{w_i''}{w_i}c_i\big)^\frac{1}{2}
		+\sqrt{c_i}\big(\tfrac{w_i''}{w_i}c_i\big)^\frac{1}{2}
		\\&\lesssim (\kappa_{0,i}+1)\sqrt{c_i}\pi_1^\frac{1}{2}\gamma
		\end{split}
		\end{equation}
		and hence 
		\begin{align}\label{eq:101b.3}
		|A_{i,I+1}(Z)\nabla u|\lesssim (\kappa_{0,i}+1)\sqrt{c_i}P^\frac{1}{2}(Z).
		\end{align}
		By our convention that $\kappa_{j,i}\lesssim 1$ for all $j,i$, we see that the RHS is bounded above by the RHS of~\eqref{eq:402.3}.
		
		Supposing instead~\HH, we estimate using~\ref{it:3},~\ref{it:1}
		\begin{align}\label{eq:101}
		|A_{i,I+1}(Z)| 	&\lesssim
		-a_i\pi_1^\frac{1}{2}\tfrac{w_i''}{w_i}c_i+\pi_1^\frac{1}{2}\gamma c_i
		\\&\lesssim (a_i(c,u)+c_i)\pi_1^\frac{1}{2}\gamma 
		\end{align}
		and deduce 
		\begin{align}\label{eq:101b}
		|A_{i,I+1}(Z)\nabla u|\lesssim ((\kappa_{1,i}+1)c_i+\kappa_{0,i})P^\frac{1}{2}(Z).
		\end{align}
		
		Finally, we recall (cf.\;eq.~\eqref{eq:defb*}) that $0\le A_{I+1,I+1}(Z)= \pi_1\gamma$ to infer
		\begin{align}\label{eq:102b}
		|A_{I+1,I+1}(Z)\nabla u|\lesssim \pi_1^\frac{1}{2}(Z)P^\frac{1}{2}(Z).
		\end{align}
		Put together, we obtain estimate~\eqref{eq:402.3} resp.~\eqref{eq:402}. 
	\end{proof}

	\section{Approximation scheme}
	\label{sec:delta}
	
	In this and the subsequent section (Sections~\ref{sec:delta}--\ref{sec:time}),
	we assume that all hypotheses of Theorem~\ref{thm:main} hold true, and the main
	purpose of these sections is to prove Theorem~\ref{thm:main}.  In the current
	section, we construct solutions to an approximate time-discrete, regularised
	problem.  In our approximate equations we replace the time derivative by a
	backward difference quotient $\dot Z\approx\tau^{-1}(Z-Z^{k-1})$ with
	$0<\tau\ll1$ denoting the size of the time step and $Z^{k-1}$ corresponding to
	the solution at the previous time step, and introduce an elliptic
	regularisation at the level of the entropy variables.  Schemes of that type
	have been proposed in~\cite{Juengel_2015_boundednes-by-entropy} (see
	also~\cite{BDPS_2010,J_2000_thermo,DGJ_1997} for a selection of earlier works).
	
	Throughout this section, $\delta>0$ is kept fixed and we assume that
	$S=S_\delta$ is given by~\eqref{eq:Seps}.  Then, the associated
	$\gamma=\gamma_\delta$ (see def.~\eqref{eq:def.gamma}) depends on $\delta$, and
	we have $\gamma_\delta\ge \gamma$. To obtain good a priori estimates, we will
	also consider a modified coefficient $\pi_{1,\delta}$ converging, as
	$\delta\to0$, to the function $\pi_1$ considered in Theorem~\ref{thm:main}.
	The specific choice of $\pi_{1,\delta}$ depends on whether~~\HHp\ or~\HH\ are
	considered:
	\begin{itemize}
		\item Under assumption~\HHp, we choose
		$\pi_{1,\delta}=\pi_1\gamma\tfrac{1}{\gamma_\delta}$, which ensures
		that~\ref{it:pi.m3} is preserved for the
		$\delta$-model. Condition~\ref{it:w.m3} remains obviously valid.
		\item Let us now instead suppose~\HH. In this case, we simply let
		$\pi_{1,\delta}(c,u)=\pi_1(c,u)+\delta u^2$. This ensures that
		$\pi_{1,\delta}^\frac{1}{2}(c,u)\lambda''(u)\gtrsim_\delta 1$, where
		$\lambda$ denotes the Boltzmann function (so that
		$\lambda''(u)=\tfrac{1}{u}$). Here, we need to point out that hp.~\ref{it:H5}
		remains true since $u\tfrac{w_i'(u)}{w_i(u)}\lesssim 1$ as a consequence of
		hp.~\ref{it:w}. (Indeed,
		$w_i(u)\ge w_i(u)-w_i(0)= \int_0^uw'_i(v)\,\dd v\ge uw'_i(u)$ since $w_i$ is
		concave.) The bounds~\ref{it:H2},~\ref{it:3} for the corresponding
		$\delta$-quantities are obvious.
	\end{itemize}
	Of course, the $\delta$-dependence of $\pi_1=\pi_{1,\delta}$ implies that
	$\mathbb{M}=\mathbb{M}_\delta$ is also $\delta$-dependent. Crucial for our
	subsequent analysis is the observation that thanks to the properties listed
	above, hypotheses~\HH\ resp.~\HHp\ and thus the a priori estimates in
	Lemma~\ref{l:EP} and Lemma~\ref{l:efl} are equally true for the
	$\delta$-regularised model.  The $\delta$-dependencies will therefore not be
	explicitly indicated in the current section.  We also assume, in
	this section, that the reaction rates $R=(R_i(Z))_{i=1}^I$ are globally
	bounded. Possible dependencies on $\|R_i\|_{L^\infty}$ will always be
	indicated.  These hypotheses regarding $S$ and $R$ will be removed in
	Section~\ref{sec:time} by approximation.
	
	Given an approximate solution $Z^{k-1}=(c^{k-1},u^{k-1})$ (at the previous time
	step) we are concerned with constructing a solution at the subsequent time
	step. This amounts to solving a problem of elliptic type obtained by the
	backward time discretisation.  We therefore suppose in this section that
	\begin{align}\label{eq:2023}
	Z^{k-1}=(c^{k-1},u^{k-1})\in (L\log L(\Omega))^I\times L^2(\Omega)
	\end{align}
	is a given vector-valued function with non-negative components.  
	We will further require  the hypothesis
	\begin{align}\label{eq:2024}
	\hat\sigma_{-}(u^{k-1})\in L^1(\Omega), 
	\end{align}
	which is non-redundant if $\hat\sigma(0+)=-\infty$. It ensures that the entropy $\int_\Omega S(Z^{k-1})\,\dd x$  is finite (see Lemma~\ref{l:S}).
	
	To construct a solution to the nonlinear elliptic problem in the entropy variables, it is convenient to introduce a higher order regularisation of order $m>\frac{d}{2}$, so that $H^m(\Omega)\overset{\mathrm{c}}{\hookrightarrow} C_b(\Omega)$. The associated regularisation parameter is denoted by $\varepsilon>0$.
	To simplify notation we will often drop superscripts like $(\cdot)^{I+1}$ etc.\ when denoting spaces of vector-valued functions.
	
	\begin{lemma}\label{l:weakLin}
		Given $\tilde W\in L^\infty$ and letting $\tilde Z:=(-DS)^{-1}(\tilde W)$,
		there exists a unique
		$W=(y,v)\in (H^m)^{I+1}$ satisfying  for all $\phi=(\phi',\phi_{I+1})\in (H^m)^{I+1}$
		\begin{align*}
		\tau\int_\Omega (\mathbb{M}(\tilde Z) & \nabla W):\nabla \phi \,\dd x 
		+ \tau\varepsilon\left(\sum_{|\alpha|=m}\int_\Omega \partial^\alpha W\cdot \partial^\alpha \phi\,\dd x+\int_\Omega W\cdot\phi\,\dd x \right) \\&
		=-\int_\Omega (\tilde Z-Z^{k-1})\cdot\phi\,\dd x +\tau\int_\Omega R(\tilde Z)\cdot\phi'\,\dd x.
		\end{align*}
	\end{lemma}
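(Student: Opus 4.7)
The statement is a standard Lax--Milgram type assertion for a linear elliptic problem in $W\in (H^m)^{I+1}$, with the nonlinearity frozen at $\tilde Z=(-DS)^{-1}(\tilde W)$. My plan is to rewrite the identity as $a(W,\phi)=\ell(\phi)$ with
\[
a(W,\phi):=\tau\!\int_\Omega\!(\mathbb{M}(\tilde Z)\nabla W){:}\nabla\phi\,\dd x
+\tau\varepsilon\!\left(\sum_{|\alpha|=m}\!\int_\Omega\!\partial^\alpha W\cdot\partial^\alpha\phi\,\dd x+\int_\Omega W\cdot\phi\,\dd x\right),
\]
\[
\ell(\phi):=-\int_\Omega(\tilde Z-Z^{k-1})\cdot\phi\,\dd x+\tau\!\int_\Omega R(\tilde Z)\cdot\phi'\,\dd x,
\]
and to verify the three hypotheses of the Lax--Milgram theorem on the Hilbert space $(H^m(\Omega))^{I+1}$.

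First I would observe that $\tilde W\in L^\infty$ together with the pointwise bound~\eqref{eq:310} yields $\tilde Z\in L^\infty$, so the continuous function $\mathbb{M}(\tilde Z)$ is in $L^\infty(\Omega;\mathbb{R}^{(I+1)\times(I+1)})$. Hence
\[
|a(W,\phi)|\lesssim \tau(\|\mathbb{M}(\tilde Z)\|_{L^\infty}+\varepsilon)\,\|W\|_{H^m}\|\phi\|_{H^m},
\]
which gives continuity of $a$. For continuity of $\ell$ I would use the embedding $H^m\hookrightarrow L^\infty$ (since $m>d/2$) together with $\tilde Z\in L^\infty\subset L^2$, $Z^{k-1}\in (L\log L)^I\times L^2\subset L^1$ by \eqref{eq:2023}, and the standing boundedness of $R$ in this section, giving $|\ell(\phi)|\lesssim (\|\tilde Z\|_{L^2}+\|Z^{k-1}\|_{L^1}+\tau\|R\|_{L^\infty})\,\|\phi\|_{H^m}$.

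The coercivity is the key point, and it rests entirely on the elliptic regularisation since $\mathbb{M}$ is merely positive semi-definite (cf.\ the first displayed formula in the proof of Lemma~\ref{l:EP}). Symmetry and semi-definiteness of $\mathbb{M}$ give $\int_\Omega(\mathbb{M}(\tilde Z)\nabla W){:}\nabla W\,\dd x\ge 0$, so
\[
a(W,W)\ge \tau\varepsilon\Big(\textstyle\sum_{|\alpha|=m}\|\partial^\alpha W\|_{L^2}^2+\|W\|_{L^2}^2\Big)\gtrsim\tau\varepsilon\,\|W\|_{H^m}^2,
\]
where the last step uses the standard equivalence of the norm $(\sum_{|\alpha|=m}\|\partial^\alpha\,\cdot\,\|_{L^2}^2+\|\cdot\|_{L^2}^2)^{1/2}$ with $\|\cdot\|_{H^m}$. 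Applying the Lax--Milgram theorem yields a unique $W\in(H^m)^{I+1}$ with $a(W,\phi)=\ell(\phi)$ for every $\phi\in(H^m)^{I+1}$, which is precisely the claim.

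I do not anticipate a genuine obstacle: the entire linearisation was designed so that the singular/nonlinear behaviour is hidden inside the essentially bounded coefficient $\mathbb{M}(\tilde Z)$, while the higher-order regularisation of order $m>d/2$ supplies both coercivity and the $L^\infty$-control needed to evaluate $\ell$ on $(H^m)^{I+1}$. The only minor care point is to verify once more that $\mathbb{M}(\tilde Z)$ is indeed essentially bounded, which follows from the continuity of $\mathbb{M}$ on $[0,\infty)^{I+1}$ (hypotheses~\ref{it:mi1} and~\ref{it:pi}, together with the extra $\delta$-terms defining $\pi_{1,\delta}$) and the bound $\tilde Z\in L^\infty$.
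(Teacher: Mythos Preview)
Your proposal is correct and follows exactly the approach indicated in the paper: the authors' proof simply invokes the Lax--Milgram theorem, citing the local boundedness of $\mathbb{M}$ and $R$ together with the bound~\eqref{eq:310} for $\|\tilde Z\|_{L^\infty}$, and refers to \cite{Juengel_2015_boundednes-by-entropy,ChenJuengel_2019_renormalised} for similar arguments. You have supplied precisely the verification of continuity and coercivity that is left implicit there.
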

	\begin{proof}
		This follows from the Lax--Milgram theorem using the local boundedness of the functions $\mathbb{M}(\cdot)$ and $R(\cdot)$, and the fact that by~\eqref{eq:310}, $\|\tilde Z\|_{L^\infty}\le C_\delta(\|\tilde W\|_{L^\infty})$. (See e.g.~\cite{Juengel_2015_boundednes-by-entropy,ChenJuengel_2019_renormalised} for the proofs of rather similar assertions).
	\end{proof}
	
	We now construct a solution to the nonlinear problem.
	\begin{lemma}\label{l:weakNl1}
		There exists $W\in H^m$ such that for all $\phi\in H^m:$
		\begin{equation}
		\begin{split}
		\tau\int_\Omega (\mathbb{M}(Z) \nabla W)&:\nabla \phi \,\dd x 
		+ \tau\varepsilon\big(\sum_{|\alpha|=m}\int_\Omega \partial^\alpha W\cdot \partial^\alpha \phi \,\dd x+\int_\Omega W\cdot\phi\,\dd x \big)
		\\&=-\int_\Omega (Z-Z^{k-1})\cdot\phi\,\dd x+ \tau\int_\Omega R(Z)\cdot\phi'\,\dd x. 
		\end{split}
		\label{eq:regweakNL}
		\end{equation}
		Here $Z=(-DS)^{-1}(W)$.
	\end{lemma}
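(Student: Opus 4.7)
}
The plan is to invoke the Leray--Schauder fixed point theorem in $L^\infty(\Omega;\mathbb{R}^{I+1})$. For $\tilde W\in L^\infty$ and $\sigma\in[0,1]$, set $\tilde Z:=(-DS)^{-1}(\tilde W)$, which is well-defined and pointwise bounded by~\eqref{eq:310}, and let $W=G(\tilde W,\sigma)\in H^m$ denote the unique solution, provided by Lemma~\ref{l:weakLin}, to the linear problem obtained from~\eqref{eq:regweakNL} by replacing $Z$ by $\tilde Z$ in $\mathbb{M}(Z)$, $R(Z)$, and in $Z-Z^{k-1}$, and by multiplying the entire right-hand side by $\sigma$. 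Then $G(\cdot,0)\equiv 0$, and any fixed point of $G(\cdot,1)$ is a solution to~\eqref{eq:regweakNL}.

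To establish continuity and compactness of $G\colon L^\infty\times[0,1]\to L^\infty$, suppose $\tilde W_n\to\tilde W$ in $L^\infty$; then $\tilde Z_n\to\tilde Z$ uniformly by continuity of $(-DS)^{-1}$, hence $\mathbb{M}(\tilde Z_n)\to\mathbb{M}(\tilde Z)$ and $R(\tilde Z_n)\to R(\tilde Z)$ uniformly as well. Testing the linear equation with $\phi=W_n$ together with the Sobolev embedding $H^m\hookrightarrow L^\infty$ (valid since $m>d/2$) yields a uniform bound $\|W_n\|_{H^m}\le C(\|\tilde W_n\|_{L^\infty})$, so a weakly convergent subsequence in $H^m$ converges strongly in $L^\infty$, and passing to the limit in the linear equation identifies it as $G(\tilde W,\sigma)$; the uniqueness in Lemma~\ref{l:weakLin} forces convergence of the full sequence. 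The same estimate, combined with the compact embedding, shows that $G$ sends bounded subsets of $L^\infty\times[0,1]$ to precompact subsets of $L^\infty$.

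The crucial step is a uniform-in-$\sigma$ a priori bound on fixed points $W=G(W,\sigma)$. Writing $Z:=(-DS)^{-1}(W)$ and $\chi:=-S$ (convex, since $S=S_\delta$ is concave) and testing the resulting equation with $\phi=W$, one discards the non-negative entropy-production term $\tau\int_\Omega(\mathbb{M}(Z)\nabla W):\nabla W\,\dd x$ to arrive at
\begin{align*}
\tau\varepsilon\|W\|_{H^m}^2\;\le\;-\sigma\!\int_\Omega(Z-Z^{k-1})\cdot W\,\dd x\;+\;\sigma\tau\!\int_\Omega R(Z)\cdot W'\,\dd x.
\end{align*}
The key estimate is the Fenchel--Young identity $Z\cdot W=\chi(Z)+\chi^*(W)$, which holds with equality since $W=D\chi(Z)$, together with Young's inequality $Z^{k-1}\cdot W\le\chi(Z^{k-1})+\chi^*(W)$: the $\chi^*(W)$ contributions cancel and
\begin{align*}
-\!\int_\Omega(Z-Z^{k-1})\cdot W\,\dd x\;\le\;\int_\Omega\chi(Z^{k-1})\,\dd x-\int_\Omega\chi(Z)\,\dd x\;\le\;\int_\Omega\chi(Z^{k-1})\,\dd x+C_\delta|\Omega|,
\end{align*}
where the last step uses the uniform lower bound $\chi=-S_\delta\ge -C_\delta$. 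This bound follows from a case analysis on $S_\delta(c,u)$: the regularising term $\delta\lambda(u)\sim\delta u\log u$ dominates the sublinear contribution of $\hat\sigma_0(u)$ as $u\uparrow\infty$, concavity of $\hat\sigma_0$ keeps $\hat\sigma_0(u)$ bounded above near $u=0$, and the $\lambda(c_i)$ terms control the concentrations (cf.~Lemma~\ref{l:S}). Since $\int_\Omega\chi(Z^{k-1})\,\dd x$ is finite by~\eqref{eq:2023}--\eqref{eq:2024} and Lemma~\ref{l:S}, and since $\sigma\tau\|R\|_{L^\infty}\|W\|_{L^1}\le\tfrac{\tau\varepsilon}{2}\|W\|_{L^2}^2+C_{\tau,\varepsilon,R}$ can be absorbed into the left-hand side, using $\sigma\le 1$ yields $\|W\|_{H^m}\le C$ uniformly in $\sigma$, whence $\|W\|_{L^\infty}\le C'$ by Sobolev embedding. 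The Leray--Schauder theorem then produces a fixed point of $G(\cdot,1)$.

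The main obstacle is precisely this uniform-in-$\sigma$ a priori bound: the right-hand side of the fixed-point equation depends nonlinearly on $W$ through $Z=(-DS)^{-1}(W)$, and no standard coercivity in $W$ is directly available by Cauchy--Schwarz-type estimates. The Fenchel--Young cancellation, which is made possible by the $\delta$-regularisation~\eqref{eq:Seps} rendering $\chi=-S_\delta$ bounded below on the positive cone, is the device that turns the test $\phi=W$ into a self-contained estimate.
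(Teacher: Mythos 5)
Your proof is correct and follows essentially the same route as the paper: the same Leray--Schauder set-up built on the fixed-point map from Lemma~\ref{l:weakLin}, the same continuity and compactness arguments, and the same choice of test function $\phi=W=-DS(Z)$; your Fenchel--Young cancellation is precisely the concavity (subgradient) inequality $\int_\Omega(Z^{k-1}-Z)\cdot DS(Z)\,\dd x\le \mathcal S(Z^{k-1})-\mathcal S(Z)$ that underlies \eqref{eq:01}. The one genuine divergence is how the a priori bound is closed. You observe that the regularisation \eqref{eq:Seps} makes $S_\delta\le C_\delta$ pointwise on the positive cone, since the $-\delta\lambda(u)$ term dominates the at most linear growth in $u$ of the upper bound \eqref{eq:112ptw}; hence $-\int_\Omega\chi(Z)\,\dd x\le C_\delta$ and the estimate is self-contained. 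The paper instead tests with $\phi=(0,\dots,0,1)^T$ to control $\gamma_1\int_\Omega u\,\dd x$ (see \eqref{eq:energy1}) and then absorbs the linear-in-$u$ contributions of \eqref{eq:112ptw} after shrinking their coefficient. Your shortcut is perfectly valid for the existence assertion of this lemma ($\delta$ is fixed here, so a constant degenerating as $\delta\to0$ is harmless), whereas the paper's longer route is chosen because the resulting refined estimate \eqref{eq:305} is reused verbatim with $\gamma_1=1$ to obtain the $\delta$-uniform bounds \eqref{eq:110b}--\eqref{eq:1005} that drive the subsequent limit passages. A last cosmetic difference: you control the reaction term via $\|R\|_{L^\infty}$ and Young's inequality, while the paper simply drops it using the entropy inequality \eqref{eq:109b}, since $R(Z)\cdot W'=-R(Z)\cdot D_cS(Z)\le 0$; both are admissible under the standing boundedness assumption on $R$ in this section.
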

	\begin{proof}We want to apply the Leray--Schauder theorem (see~\cite[Theorem~11.3]{GilbargTrudinger_book}).
		For this purpose, we define a fixed point map 
		\begin{align*}
		\Gamma: L^\infty\to L^\infty,\qquad \tilde W\mapsto W,
		\end{align*}
		where $W\in H^m\overset{c}{\hookrightarrow} L^\infty$ denotes the solution obtained in Lemma~\ref{l:weakLin}. 
		
		We proceed in three steps.
		
		\underline{Step~1:} $\Gamma$ is a continuous operator.
		Suppose that $\tilde W_j\to \tilde W$ in $L^\infty$ and let $W_j=\Gamma(\tilde W_j)$ and $\tilde Z_j:=(-DS)^{-1}(\tilde W_j)$. Then 
		$L:=\sup_j\|\tilde W_j\|_{L^\infty}<\infty$ and hence
		\begin{align*}
		\|\tilde Z_{j} \|_{L^\infty}\le C(L).
		\end{align*}
		Choosing $\phi=W_j$ in the equation for $W_j$ we therefore get for $\theta>0$
		\begin{align}\label{eq:208}
		\tau\varepsilon \|W_j\|_{H^m}^2
		\le C(L,\theta,\|Z^{k-1}\|_{L^1}, \|R\|_{L^\infty})+\theta\|W_j\|_{L^\infty}^2.
		\end{align}
		Using the embedding $H^m\hookrightarrow L^\infty$, an absorption argument gives
		\begin{align*}
		\|W_j\|_{H^m}^2 \le C(L,\tau,\varepsilon,\|Z^{k-1}\|_{L^1}, \|R\|_{L^\infty}).
		\end{align*}
		Hence, there exists $W_\infty\in H^m$ such that, along a subsequence, $W_j\rightharpoonup W_\infty$ in $H^m$, 
		$W_j\to W_\infty$ in $C_b(\Omega)$. Observe that, after possibly passing to a subsequence,  we can assume that $\tilde Z_j \to \tilde Z$ a.e.\;in $\Omega$ and, by dominated convergence, also $\tilde Z_{j} \to \tilde Z$ in $L^p$ for any $p\in[1,\infty)$. It is now easy to see that $W_\infty$ satisfies for all $\phi\in H^m$
		\begin{equation*}
		\begin{split}
		&\tau\int_\Omega (\mathbb{M}(\tilde Z) \nabla W_\infty):\nabla \phi \,\dd x 
		+ \tau\varepsilon\left(\sum_{|\alpha|=m}\int_\Omega \partial^\alpha W_\infty\cdot \partial^\alpha \phi' \,\dd x+\int_\Omega W_\infty\cdot\phi\,\dd x \right)
		\\&=-\int_\Omega (\tilde Z-Z^{k-1})\cdot\phi\,\dd x+\tau\int_\Omega R(\tilde Z)\cdot\phi'\,\dd x.
		\end{split}
		\end{equation*}
		By the uniqueness of solutions to the linear equation, we infer that $W_\infty=\Gamma(\tilde W)$.
		
		The above reasoning shows that any subsequence of $(\tilde W_j)$ has a subsequence along which $\Gamma$ converges to $\Gamma(\tilde W)$ in $L^\infty$.
		This implies that $\Gamma(\tilde W_j)\to \Gamma(\tilde W)$ in $L^\infty$. Hence $\Gamma$ is continuous.
		
		\underline{Step~2:} $\Gamma$ is a compact operator. Arguments similar to the proof of Step~1 show that the image of any bounded set in $L^\infty$ under the map $\Gamma$ is bounded in $H^m$ (cf.~\eqref{eq:208}).
		Taking also into account the compactness of the embedding
		$H^m\hookrightarrow L^\infty$, we infer that the operator $\Gamma$ is compact.
		
		\underline{Step~3:}
		A priori bound. Suppose that $W=\gamma_1 \Gamma(W)$ for some $\gamma_1\in(0,1]$.
		By hypothesis, $W$ satisfies for all $\phi\in H^m$ the equation
		\begin{align*}
		& \tau\int_\Omega (\mathbb{M}(Z) \nabla W):\nabla \phi \,\dd x 
		+ \tau\varepsilon\bigg(\sum_{|\alpha|=m}\int_\Omega \partial^\alpha W\cdot \partial^\alpha \phi \,\dd x+\int_\Omega W\cdot\phi\,\dd x \bigg)
		\\&=-\gamma_1\int_\Omega (Z-Z^{k-1})\cdot\phi\,\dd x+\gamma_1\tau\int_\Omega R(Z)\cdot\phi'\,\dd x.
		\end{align*}
		Choosing $\phi=W=-DS(Z)$ we obtain
		\begin{equation}
		\begin{split}
		\tau\int_\Omega (\mathbb{M}(Z) &\nabla W):\nabla W\,\dd x 
		+ \tau\varepsilon\bigg(\sum_{|\alpha|=m}\int_\Omega |\partial^\alpha W|^2 \,\dd x+\int_\Omega |W|^2\,\dd x\bigg)
		\\&=\gamma_1\int_\Omega (Z-Z^{k-1})\cdot DS(Z)\,\dd x
		-\gamma_1\tau\int_\Omega R(Z)\cdot D_cS(Z)\,\dd x
		\\& \le \gamma_1 (\mathcal{S}(Z)-\mathcal{S}(Z^{k-1})).
		\end{split}
		\label{eq:01}
		\end{equation}
		Choosing $\phi=(0,\dots,0,1)^T$ further yields
		\begin{align}\label{eq:energy1}
		\gamma_1\int_\Omega u \,\dd x =\gamma_1 \int_\Omega u^{k-1} \,\dd x-\varepsilon\tau\int_\Omega v\,\dd x\le  \int_\Omega u^{k-1} \,\dd x + \varepsilon\tau C
		+\tfrac{1}{2}\varepsilon\tau\|v\|_{L^2}^2.
		\end{align}
		Adding the last inequality to~\eqref{eq:01} yields upon absorption
		\begin{multline}
		\gamma_1\int u\,\dd x+\tau\mathcal{Q}(c,u)
		+ \tfrac{1}{2}\tau\varepsilon\big(\sum_{|\alpha|=m}\|\partial^\alpha W\|^2+\|W\|_{L^2}^2 \big)
		\\\label{eq:0010}\le \gamma_1 \big(\mathcal{S}(Z)-\mathcal{S}(Z^{k-1})\big)+\int_\Omega u^{k-1} \,\dd x +\varepsilon\tau C,
		\end{multline}
		where $\mathcal{Q}(c,u)$ is defined in~\eqref{eq:001i}. Using Lemma~\ref{l:S} to estimate the RHS and applying standard H\"older's and Young's inequality (to deal with integrals of sublinear functions of $u$), we can infer after absorption (possibly increasing $\beta<1$ in Lemma~\ref{l:S})
		\begin{equation}\label{eq:305}
		\begin{split}
		\gamma_1 c_\beta\sum_{i=1}^I\int_\Omega c_i\log_+(c_i)\,\dd x  &+\tfrac{\gamma_1}{2}\int_\Omega u\,\dd x+\gamma_1\|\hat\sigma_{-}(u)\|_{L^1}+ \tau \mathcal{Q}(c,u)
		\\&+ \tfrac{1}{2}\tau\varepsilon\left(\sum_{|\alpha|=m}\|\partial^\alpha W\|^2_{L^2}+\|W\|_{L^2}^2\right)
		\\\le \|\hat\sigma_-(u^{k-1})\|_{L^1}+&C\sum_{i=1}^I\int_\Omega c_i^{k-1}\log_+(c_i^{k-1})\,\dd x +\|u^{k-1}\|_{L^2}^2+C.
		\end{split}
		\end{equation}
		Here, we used the rough estimate $\delta\|\lambda(u^{k-1})\|_{L^1}\lesssim \|u^{k-1}\|_{L^2}^2+1$. 
		
		In particular, we have obtained the bound 
		\begin{align}\label{eq:apridelta}
		\|W\|_{L^\infty}\le C\|W\|_{H^m}\le  
		C(\|c^{k-1}\|_{L\log L},\|u^{k-1}\|_{L^2}, \|\hat\sigma_-(u^{k-1})\|_{L^1},\tau,\varepsilon).
		\end{align}
		Theorem~11.3 in~\cite{GilbargTrudinger_book} now yields the existence of a fixed point $W=\Gamma(W)$.
	\end{proof}
	
	Letting $\gamma_1=1$ in Step~3 of the proof of Lemma~\ref{l:weakNl1} (see~\eqref{eq:01}, \eqref{eq:305}), we infer 
	using the entropy production estimate~\eqref{eq:400}
	\begin{align}
	\epsilon_*\tau\mathcal{P}(c,u)
	+\tau\varepsilon\Big(\sum_{|\alpha|=m}\|\partial^\alpha W\|_{L^2}^2+\|W\|_{L^2}^2 \Big)
	\label{eq:110b}\le \mathcal{S}(Z)-\mathcal{S}(Z^{k-1}),
	\end{align}
	and
	\begin{multline}
	\|(c,u)\|_{L^1}+\|\hat\sigma_-(u)\|_{L^1}+\epsilon_*\tau\mathcal{P}(c,u)\label{eq:1005}
	+\tau\varepsilon\big(\sum_{|\alpha|=m}\|\partial^\alpha W\|_{L^2}^2+\|W\|_{L^2}^2\big)
	\\\le C(\|c^{k-1}\|_{L\log L},\|u^{k-1}\|_{L^2},\|\hat\sigma_{-}(u^{k-1})\|_{L^1}).
	\end{multline}
	To proceed, we need to distinguish the cases~\HH\ and~\HHp.
	
	Let us first assume that hypotheses~\HH\ hold true.
	Then, by the Sobolev embedding, the fact that $\pi_1^\frac{1}{2}=\pi_{1,\delta}^\frac{1}{2}\gtrsim_\delta u$, $\gamma_\delta\gtrsim_\delta\tfrac{1}{u}$ and the definition~\eqref{eq:401} of $\mathcal{P}(c,u)=\mathcal{P}_\delta(c,u)$, we have
	\begin{align}
	\|u\|_{L^2} & \lesssim_\delta \|\nabla u\|_{L^2}+\|u\|_{L^1}\label{eq:120}
	\\&\lesssim C(\|c^{k-1}\|_{L\log L},\|u^{k-1}\|_{L^2},\|\hat\sigma_{-}(u^{k-1})\|_{L^1},\tau,\epsilon_*,\delta).
	\end{align}
	Under hypotheses~\HHp\ it suffices to note that 
	\begin{equation}\label{eq:120.3}\tag{\text{\ref*{eq:120}'}}
	|\nabla\sqrt{u}|\lesssim_\delta\sqrt{\gamma_\delta}|\nabla u|\lesssim P_\delta^\frac{1}{2}(c,u).
	\end{equation} 
	
	We are now in a position to pass to the limit $\varepsilon\to0$ in problem~\eqref{eq:regweakNL}.
	
	\begin{proposition}\label{prop:eps=0}
		Let $A(Z)=-\mathbb{M}(Z)D^2S(Z)$.	There exists $Z=(c_1,\dots,c_I,u)$ with $c_i,u\ge0$, $\kappa_{0,i}\sqrt{c_i}\in H^1(\Omega),$ $\kappa_{1,i}c_i\in H^1(\Omega)$ and $u\in H^1(\Omega)$ in case \HH, $\sqrt{u}\in H^1(\Omega)$ in case \HHp\ such that for all $\phi=(\phi',\phi_{I+1})\in W^{1,\infty}(\Omega)^{I+1}$
		\begin{align}\label{eq:eps=0}
		\tau\int_\Omega A(Z) \nabla Z:\nabla \phi \,\dd x =-\int_\Omega (Z-Z^{k-1})\cdot\phi\,\dd x + \tau\int_\Omega R(Z)\cdot\phi'\,\dd x.
		\end{align}
		Furthermore, 
		\begin{align}\label{eq:1010}
		\tau\epsilon_*\mathcal{P}(Z)
		\le \mathcal{S}(Z)-\mathcal{S}(Z^{k-1}).
		\end{align}
	\end{proposition}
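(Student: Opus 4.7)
The plan is to pass to the limit $\varepsilon\downarrow 0$ in~\eqref{eq:regweakNL} using the $\varepsilon$-uniform estimates~\eqref{eq:1005}, \eqref{eq:120} or~\eqref{eq:120.3}, and the flux bounds of Lemma~\ref{l:efl}. Let $(W^\varepsilon)$ be the family of solutions from Lemma~\ref{l:weakNl1} and set $Z^\varepsilon=(c^\varepsilon,u^\varepsilon):=(-DS)^{-1}(W^\varepsilon)$. Estimate~\eqref{eq:1005} provides uniform control of $\|c_i^\varepsilon\|_{L\log L}$, $\|\hat\sigma_{-}(u^\varepsilon)\|_{L^1}$, $\|u^\varepsilon\|_{L^1}$, $\mathcal P(Z^\varepsilon)$, and $\tau\varepsilon\|W^\varepsilon\|_{H^m}^2$; combined with~\eqref{eq:120} (case~\HH) or~\eqref{eq:120.3} (case~\HHp) and the definition of $\mathcal P$, this yields $u^\varepsilon$ (resp.\ $\sqrt{u^\varepsilon}$) and $\sqrt{c_i^\varepsilon}$ (together with $c_i^\varepsilon$ itself if $\kappa_{1,i}>0$) bounded in $H^1$. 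Rellich--Kondrachov then gives, along a subsequence (not relabelled), $c_i^\varepsilon\to c_i\ge 0$ and $u^\varepsilon\to u\ge 0$ almost everywhere and strongly in some $L^p$, $p>1$, while the relevant gradients converge weakly in $L^2$. Since $R$ is globally bounded throughout this section, dominated convergence yields $R(Z^\varepsilon)\to R(Z)$ in every $L^p$, $p<\infty$.

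I would then fix a test function $\phi\in C^\infty(\overline\Omega)^{I+1}\subset H^m\cap W^{1,\infty}$ in~\eqref{eq:regweakNL}. The regularising $\varepsilon$-term is bounded by $\sqrt{\tau\varepsilon}\,(\tau\varepsilon\|W^\varepsilon\|_{H^m}^2)^{1/2}\|\phi\|_{H^m}\to 0$ via Cauchy--Schwarz, and the right-hand side passes to the limit by the strong convergence established above. The main obstacle is the flux term $\tau\int_\Omega A(Z^\varepsilon)\nabla Z^\varepsilon:\nabla\phi\,\dd x$. Using the explicit structure~\eqref{eq:defb*}, I would decompose each entry of $A(Z^\varepsilon)\nabla Z^\varepsilon$ as a product of an a.e.\ convergent coefficient (such as $a_i(Z^\varepsilon)\sqrt{c_i^\varepsilon}$ via the identity $a_i\nabla c_i=2a_i\sqrt{c_i}\,\nabla\sqrt{c_i}$, or factors built from $\sqrt{\pi_1(Z^\varepsilon)}\lesssim 1+u^\varepsilon$ via~\ref{it:H6}) with a weakly $L^2$-convergent gradient entering $P^{1/2}(Z^\varepsilon)$; the algebraic decompositions underlying the proof of Lemma~\ref{l:efl} give the required pointwise bounds. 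Thanks to the uniform $L^2$ control of $c_i^\varepsilon$ (case~\HH) or $\sqrt{c_i^\varepsilon}$ (case~\HHp) and of $u^\varepsilon$, the coefficient factors are equi-integrable in $L^2$ and hence converge strongly in $L^2$ by Vitali's theorem. The product of a strongly $L^2$-convergent factor with a weakly $L^2$-convergent factor converges weakly in $L^1$, so $A(Z^\varepsilon)\nabla Z^\varepsilon\rightharpoonup A(Z)\nabla Z$ weakly in $L^1$; since $\nabla\phi\in L^\infty$, the flux term passes to the limit and~\eqref{eq:eps=0} holds for every $\phi\in C^\infty(\overline\Omega)^{I+1}$. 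Extension to $\phi\in W^{1,\infty}(\Omega)^{I+1}$ follows by density together with the $L^1$ integrability of the flux given by Lemma~\ref{l:efl}.

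Finally, the entropy inequality~\eqref{eq:1010} is obtained from the $\varepsilon$-version~\eqref{eq:110b} by semi-continuity. The functional $\mathcal P$ is convex and weakly lower semi-continuous in the gradients of $\sqrt{c_i}$, $c_i$ and $u$ (with pointwise a.e.\ convergent coefficients), so $\mathcal P(Z)\le\liminf_\varepsilon\mathcal P(Z^\varepsilon)$. On the entropy side, the upper bound~\eqref{eq:112ptw} of Lemma~\ref{l:S} combined with the $L\log L$-equi-integrability of $c_i^\varepsilon$ from~\eqref{eq:1005}, the strong $L^1$ convergence of $u^\varepsilon$, and Fatou's lemma applied to $\hat\sigma_{-}(u^\varepsilon)$ yields $\limsup_\varepsilon\mathcal S(Z^\varepsilon)\le \mathcal S(Z)$. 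Passing to the $\liminf$ in~\eqref{eq:110b} then gives~\eqref{eq:1010}. The most delicate step is the decomposition of the cross-diffusion contributions to the flux, where the tailored structure of~\eqref{eq:mobility} and the pointwise bounds of Lemma~\ref{l:efl} are needed to split each entry into a strongly $L^2$-convergent multiplier and a weakly $L^2$-convergent gradient.
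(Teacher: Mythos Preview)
Your approach is essentially the same as the paper's and the overall strategy is correct, but there is one subtle point you gloss over that the paper handles explicitly.

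You write that each flux component factors as an a.e.\ convergent coefficient (strongly $L^2$) times a ``weakly $L^2$-convergent gradient entering $P^{1/2}$''. The difficulty is that the object you call the gradient factor---for instance $\sqrt{\pi_1(Z^\varepsilon)}\gamma(Z^\varepsilon)\nabla u^\varepsilon$, which indeed sits in $P^{1/2}$ and is therefore bounded in $L^2$---is itself a product of a possibly unbounded, merely a.e.\ convergent scalar $\sqrt{\pi_1}\gamma$ with the genuine gradient $\nabla u^\varepsilon$. In case~\HH, the heat-conductivity type coefficients $a=\pi_1\gamma$ (and likewise $\sqrt{\pi_1}\gamma$) may blow up as $u\downarrow 0$, so neither Lemma~\ref{l:weakConv} nor a Vitali argument based on an $L^{2+\epsilon}$ bound for the coefficient applies. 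Boundedness in $L^2$ gives weak subsequential convergence of $\sqrt{\pi_1}\gamma\nabla u^\varepsilon$, but identifying the limit as $\sqrt{\pi_1(Z)}\gamma(Z)\nabla u$ is not automatic. The paper closes this gap via the elementary Lemma~\ref{l:1} (a.e.\ convergent coefficient $a_j$ with $|a_j|\gtrsim 1$, weakly $L^1$-convergent factor $V_j$, product bounded in $L^{1+\epsilon}$), which is tailored precisely to this situation; your argument needs a similar device.

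A second, more minor imprecision: ``uniform $L^2$ control'' of a sequence does not by itself yield $L^2$-equi-integrability, hence not strong $L^2$ convergence via Vitali. What you actually have from Rellich--Kondrachov is strong convergence in $L^{2+\tilde\epsilon}$ of $c_i^\varepsilon$, $u^\varepsilon$ (case~\HH) respectively $\sqrt{c_i^\varepsilon}$, $\sqrt{u^\varepsilon}$ (case~\HHp); this is what provides the needed equi-integrability. With these two fixes your argument coincides with the paper's, which obtains directly $A(Z^\varepsilon)\nabla Z^\varepsilon\rightharpoonup A(Z)\nabla Z$ in $L^{1+\epsilon}$ and then extends~\eqref{eq:eps=0} from $H^m$ to $W^{1,\infty}$ test functions by density.
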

	\begin{proof}
		By~\eqref{eq:1005},~\eqref{eq:120} resp.~\eqref{eq:120.3}, and Sobolev embeddings there exists a (non-negative) vector-valued function $Z=(c,u)$ and a sequence $\varepsilon_j\searrow 0$ such that the associated solutions $W_j=-DS(Z_j)=-DS(c_j,u_j)$ of Lemma~\ref{l:weakNl1} (with $\varepsilon=\varepsilon_j$) satisfy for suitable $\wt\epsilon=\wt\epsilon(d)>0:$
		\begin{enumerate}[label=(\roman*)]
			\item\label{it:c1} $c_{j,i}\rightharpoonup c_i$ in $H^1(\Omega)$, $c_{j,i}\to c_i$ in $L^{2+\wt\epsilon}(\Omega)$ and a.e.\ in $\Omega$ provided $\kappa_{1,i}>0$
			\item\label{it:c1*} $\sqrt{c_{j,i}}\rightharpoonup \sqrt{c_i}$ in $H^1(\Omega)$, $\sqrt{c_{j,i}}\to \sqrt{c_i}$ in $L^{2+\wt\epsilon}(\Omega)$ and a.e.\ in $\Omega$ if $\kappa_{0,i}>0$
			\item\label{it:c2} $\nabla u_j\rightharpoonup \nabla u$ in $L^2(\Omega)$,
			$u_j\to u$ in $L^{2+\wt\epsilon}(\Omega)$ and a.e.\ in $\Omega$ under hp.~\HH\\
			\item\label{it:c2.3}   $\nabla \sqrt{u_j}\rightharpoonup \nabla \sqrt{u}$ in $L^2(\Omega)$,
			$\sqrt{u_j}\to \sqrt{u}$ in $L^{2+\wt\epsilon}(\Omega)$ and a.e.\ in $\Omega$ under hp.~\HHp.
		\end{enumerate}
		Thanks to Lemma~\ref{l:efl}, hp.~\ref{it:4} and the above convergence results (using in case~\HH\ also the elementary Lemma~\ref{l:1}), we further deduce
		\begin{align}
		A_{\varepsilon_j}(Z_j)\nabla Z_j\rightharpoonup A(Z)\nabla Z\text{ in }L^{1+\epsilon}(\Omega)\label{eq:213}
		\end{align}
		for some $\epsilon=\epsilon(d)>0$. Observe that in case~\HH, owing to estimate~\eqref{eq:402}, we here require the hypothesis that~$\kappa_{1,i}>0$ for all $i$ to ensure $L^{2+\wt\epsilon}(\Om)$-integrability of $c_i$ (and not only of $\sqrt{c_i}$). 
		By boundedness and continuity, the passage to the limit in the reaction rates is immediate.
		The fact that $\varepsilon_j\big(\sum_{|\alpha|=m}\|\partial^\alpha W\|_{L^2}+\|W\|_{L^2}\big)\lesssim\varepsilon_j^{\frac{1}{2}}$ and the above convergence results allow us to pass to the limit in eq.~\eqref{eq:regweakNL} to obtain~\eqref{eq:eps=0} for all $\phi\in H^m(\Omega)^{I+1}$. A density argument then allows to extend this identity in particular to all $\phi\in W^{1,\infty}(\Omega)^{I+1}$. 
		
		Inequality~\eqref{eq:110b} and weak lower semi-continuity in $L^2$ yield~\eqref{eq:1010}.
		Here, we also used the fact that $\limsup_{j\to\infty}\mathcal{S}(c_j,u_j)\le\mathcal{S}(c,u)$, which follows from~\ref{it:c1}--\ref{it:c2.3}) and 
		Fatou's lemma applied to  $\hat\sigma_{-}(u_j)$ giving
		$\limsup_{j\to\infty}\int_\Om \hat\sigma_{-}(u_j)\,\dd x\le \int_\Om \hat\sigma_{-}(u)\,\dd x$.
	\end{proof}
	
	Having removed the regularisation in the entropy variables,
	we derive an $L^2$-energy estimate to upgrade the regularity of $u$. 
	
	\begin{lemma}[Energy estimate]\label{l:energy}
		The solution $Z=(c,u)$ obtained in Prop.~\ref{prop:eps=0} satisfies the bound
		\begin{align}\label{eq:121}
		2\tau\int_\Omega a(c,u)|\nabla u|^2\dd x+\int_\Omega u^2\,\dd x \le \int_\Omega (u^{k-1})^2\,\dd x.
		\end{align}
	\end{lemma}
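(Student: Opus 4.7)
The starting point is the structural observation from~\eqref{eq:defb*} that the last row of $A(c,u)$ is purely diagonal, namely $A_{I+1,j}(c,u) = a(c,u)\,\delta_{I+1,j}$. Consequently, inserting into~\eqref{eq:eps=0} a test function of the special form $\phi = (0,\ldots,0,\psi)$ kills the reaction contribution (since $\phi' = 0$) and collapses the diffusion integrand to $A(Z)\nabla Z : \nabla \phi = a(c,u)\,\nabla u \cdot \nabla \psi$. This yields the reduced identity
\[
\tau \int_\Omega a(c,u)\,\nabla u \cdot \nabla \psi \,\dd x + \int_\Omega (u - u^{k-1})\,\psi \,\dd x = 0.
\]
The plan is now to choose $\psi = u$ and invoke the elementary discrete convexity identity $(u - u^{k-1})\,u = \tfrac12 u^2 - \tfrac12 (u^{k-1})^2 + \tfrac12 (u - u^{k-1})^2$, which would give
\[
\tau \int_\Omega a(c,u)\,|\nabla u|^2 \,\dd x + \tfrac12 \int_\Omega u^2\,\dd x + \tfrac12 \int_\Omega (u-u^{k-1})^2\,\dd x = \tfrac12 \int_\Omega (u^{k-1})^2\,\dd x;
\]
dropping the non-negative third term on the left and multiplying by~$2$ then yields~\eqref{eq:121}.

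The only nontrivial point is to justify taking $\psi = u$, since Proposition~\ref{prop:eps=0} supplies~\eqref{eq:eps=0} only for $\psi \in W^{1,\infty}(\Omega)$, whereas we only know $u \in H^1(\Omega)$ in case~\HH\ and merely $\sqrt u \in H^1(\Omega)$ in case~\HHp. I would handle this by approximation: test with $\psi_{M,n} := \rho_n \ast T_M(u) \in W^{1,\infty}(\Omega)$, where $T_M(s) = \min(s,M)$ is the standard Lipschitz truncation and $\rho_n$ is a mollifying kernel. Passing $n \to \infty$ at fixed $M$ is routine and yields the same identity with $|\nabla u|^2$ replaced by $|\nabla u|^2 \mathbf{1}_{u<M}$ and $u$ on the right-hand side replaced by $T_M(u)$. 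One then sends $M \to \infty$: the right-hand side converges by dominated convergence, using $u \in L^2(\Omega)$ (which follows from~\eqref{eq:120} or~\eqref{eq:120.3} together with Sobolev embedding and the $L^1$-bound for $u$), while the left-hand side converges by monotone convergence, simultaneously producing the a posteriori integrability $a(c,u)|\nabla u|^2 \in L^1(\Omega)$.

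The main obstacle is precisely this density/approximation step: because the coefficient $a(c,u) = \pi_1\gamma$ is not uniformly bounded and the $L^2$-type control of $\sqrt{a(c,u)}\,\nabla u$ is itself an \emph{output} of the estimate rather than an a priori input, one cannot simply invoke a linear extension of~\eqref{eq:eps=0} to $H^1$ test functions; the truncation-and-mollification procedure is what allows the missing integrability to emerge in the limit. Once this step is in place, the conclusion reduces to the one-line algebraic manipulation above.
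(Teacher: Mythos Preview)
Your approach is essentially the same as the paper's: both test the $(I{+}1)$-th equation with a truncation $T_M(u)=\min\{u,M\}$ (the paper writes $u_L$) and then send the truncation height to infinity. The paper skips your mollification step and simply notes that the identity extends to $\psi=u_L$ by approximation (referring to the detailed argument for~\eqref{eq:2.wlt}); in the final step the paper uses Cauchy--Schwarz $\int u^{k-1}u\le\tfrac12\|u^{k-1}\|_{L^2}^2+\tfrac12\|u\|_{L^2}^2$ rather than your convexity identity, which is of course equivalent.

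There is one small slip in your justification. You assert that $u\in L^2(\Omega)$ is known \emph{a priori} from~\eqref{eq:120} or~\eqref{eq:120.3} plus Sobolev. This is fine in case~\HH\ (where~\eqref{eq:120} gives $\nabla u\in L^2$), but in case~\HHp\ one only has $\sqrt{u}\in H^1(\Omega)$ from~\eqref{eq:120.3}, and for $d\ge 5$ Sobolev embedding yields merely $u\in L^{d/(d-2)}$, which is strictly weaker than $L^2$. The paper avoids this: at the truncated level $u_L\le u$ gives $\|u_L\|_{L^2}^2\le\int u\,u_L\le\int u^{k-1}u_L\le\|u^{k-1}\|_{L^2}\|u_L\|_{L^2}$, whence $\|u_L\|_{L^2}\le\|u^{k-1}\|_{L^2}$ uniformly in $L$, and then monotone convergence yields $u\in L^2$ \emph{a posteriori}. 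Replacing your dominated-convergence step by this argument closes the gap.
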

	\begin{proof}
		It follows from~\eqref{eq:eps=0} that for all $\psi\in W^{1,\infty}(\Omega)$
		\begin{align}\label{eq:1007}
		\tau\int_\Omega a(c,u)\nabla u\cdot\nabla \psi \,\dd x =-\int_\Omega (u-u^{k-1})\psi\,\dd x.
		\end{align}
		For $L\in \mathbb{N}$ consider $u_L:=\min\{u,L\}$.
		Then, in view of estimate~\eqref{eq:102b},~\eqref{eq:1010} and hp.~\ref{it:4}, we have $\int_\Omega a(c,u)|\nabla u_L|^2\dd x <\infty$, both in case~\HH\ and in case~\HHp.
		Using an approximation argument, one can now show that the above identity also holds for $\psi=u_L$. 
		(See for instance the proof of the $L^2$-energy identity~\eqref{eq:2.wlt} for a detailed argument in a related, but somewhat more complex situation.)
		Thus,
		\begin{align}\label{eq:1008}
		\tau\int_\Omega a(c,u)|\nabla u_L|^2\dd x +\int_\Omega uu_L\,\dd x=\int_\Omega u^{k-1}u_L\,\dd x.
		\end{align}
		Sending $L\to\infty$, we infer 
		\begin{align}\label{eq:1009}
		\tau\big\|a^\frac{1}{2}(c,u)\nabla u\big\|^2_{L^2} +\|u\|^2_{L^2} & \le \int_\Omega u^{k-1}u\,\dd x
		\le \tfrac{1}{2}\|u^{k-1}\|_{L^2}^2+\tfrac{1}{2}\|u\|_{L^2}^2,
		\end{align}
		which yields the asserted bound~\eqref{eq:121}.
	\end{proof}

	\section{Global weak solutions}
	\label{sec:time}
	
	We recall that $\mathcal{S}=\mathcal{S}_\delta$ (see eq.~\eqref{eq:Seps}),
	$\mathcal{P}=\mathcal{P}_\delta$, $\mathbb{M}=\mathbb{M}_\delta$ etc.\ (see the
	second paragraph of the introductory part in Section~\ref{sec:delta}), and
	apply Proposition~\ref{prop:eps=0} with $R=R_\rreg$ defined by
	\begin{align}\label{eq:defRrho}
	R_\rreg(Z)=\frac{ R(Z)}{\rreg|R(Z)|+1}.
	\end{align}
	In order to emphasise the $\delta$-dependence of quantities like
	$\mathcal{S}(Z), \mathcal{P}(Z)$, we will add the subscript $(\cdot)_\delta$
	and write $\mathcal{S}_\delta(Z), \mathcal{P}_\delta(Z)$ etc.
	
	Given a vector $Z^{k-1}=(c^{k-1},u^{k-1})\in (L\log L)^I\times L^2$,
	$k\in\mathbb{N}$, with non-negative components and such that
	$\|\hat\sigma_{0,-}(u^{k-1})\|_{L^1}<\infty$, we let $(c^k,u^k)=Z^k$ denote the
	solution $Z$ of eq.~\eqref{eq:eps=0} (with $S=S_\delta$, $R=R_\rreg$) obtained
	in Prop.~\ref{prop:eps=0}. We also use the notation
	$W^k=(y^k,v^k)=-DS_\delta(c^k,u^k)$.  By construction and Lemma~\ref{l:energy},
	we have $(c^{k},u^{k})\in (L\log L)^I\times L^2$ with non-negative components.
	As a consequence of estimate~\eqref{eq:1010}, we further have
	$\|\hat\sigma_{0,-}(u^k)\|_{L^1}<\infty$.  Hence, given an initial datum
	$Z^0\in (L\log L)^I\times L^2$ with $\|\hat\sigma_{0,-}(u^0)\|_{L^1}<\infty$
	and a time step size $\tau>0$, this allows us to iteratively construct a
	sequence $(Z^k)_{k\in \mathbb{N}}$, where $Z^k_i\ge0$ for all $i$ and
	$Z^k\in (L\log L)^I\times L^2$ with $\|\hat\sigma_{0,-}(u^k)\|_{L^1}<\infty$
	for all $k\in\mathbb{N}$.  Our goal is now to send $\tau,\rreg,\delta\to0$ in
	order to construct a solution to the original time-continuous problem.
	
	\subsection{Uniform estimates}
	Summing estimate~\eqref{eq:1010} over the time steps from $k=1$ to $k=n\in\mathbb{N}$
	yields
	\begin{align}
	\epsilon_*\tau\sum_{k=1}^n\mathcal{P}_\delta(Z^k)\le \mathcal{S}_\delta(Z^n)-\mathcal{S}_\delta(Z^{0}). \label{eq:1011}
	\end{align}
	By estimate~\eqref{eq:121} we further have
	\begin{align}\label{eq:122}\qquad
	2\tau\sum_{k=1}^n\int_\Omega a_\delta(c^k,u^k)|\nabla u^k|^2\dd x+\int_\Omega (u^n)^2\,\dd x \le \int_\Omega (u^0)^2\,\dd x. 
	\end{align}
	Adding the previous two inequalities and recalling estimates~\eqref{eq:112ptw},~\eqref{eq:112ptwii} (with $S=S_0$, treating $\delta\lambda(u^{0})\lesssim 1+(u^0)^2$ separately), we deduce upon absorption 
	\begin{equation}
	\begin{split}
	\sup_{n\in\mathbb{N}}\bigg(\sum_{i=1}^I&\int_\Omega c_i^n\log c_i^n\,\dd x+\|u^n\|_{L^2}^2+\|\hat\sigma_{0,-}(u^n)\|_{L^1}\bigg)
	+
	\epsilon_*\tau\sum_{k=1}^\infty\mathcal{P}_\delta(Z^k)\\&+\tau\sum_{k=1}^\infty\int_\Omega a_\delta(c^k,u^k)|\nabla u^k|^2\dd x
	\le C\bigg(\sum_{i=1}^I\|c^0\|_{L\log L}, \|u^0\|_{L^2},\|\hat\sigma_{0,-}(u^0)\|_{L^1}\bigg). 
	\end{split}
	\label{eq:1012}
	\end{equation}
	To simplify notation, we henceforth abbreviate $$\mathcal{N}(Z^0):=\sum_{i=1}^I\|c^0_i\|_{L\log L}+ \|u^0\|_{L^2}+\|\hat\sigma_{0,-}(u^0)\|_{L^1}.$$
	
	\begin{lemma} \label{l:tmpdq}
		Given $T\in(0,\infty)$ and $N\in \mathbb{N}$ suppose that $\tau =T/N$. Let further  $\delta\in(0,1]$ and $s=\tfrac{2d+2}{2d+1}$. Then there exists a finite constant $C=C(\mathcal{N}(Z^0))$ (depending also on $\epsilon_*$ and $\kappa_{j,i}$) such that
		\begin{align}\label{eq:201}
		\tau\sum_{k=1}^N\|A_\delta(Z^k)\nabla Z^k\|_{L^s(\Omega)}^s & \le C(\mathcal{N}(Z^0))(1+T).
		\end{align}
	\end{lemma}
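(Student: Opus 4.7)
The plan is to exploit the pointwise flux estimate from Lemma~\ref{l:efl} together with the entropy-production bound accumulated in~\eqref{eq:1012}, bringing them together via Hölder in space and in the discrete time index, and to close the argument with a discrete space-time Gagliardo--Nirenberg interpolation.

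First, I would apply Lemma~\ref{l:efl}, combined with hypothesis~\ref{it:H6}, to write $|A_\delta(Z^k)\nabla Z^k|\lesssim X^k\cdot (P_\delta(Z^k))^{1/2}$ pointwise, where under~\HH\ $X^k:=\max_i\bigl(c^k_i+\kappa_{0,i}+\sqrt{\pi_{1,\delta}(Z^k)}\bigr)\lesssim 1+\sum_i c^k_i+u^k$ and under~\HHp\ $X^k\lesssim 1+\sum_i\sqrt{c^k_i}+u^k$, by~\ref{it:H6}. Setting $r:=(2d+2)/d$, the exponent $s=(2d+2)/(2d+1)$ is precisely the conjugate satisfying $s^{-1}=r^{-1}+\tfrac12$, so Hölder's inequality in $x$ yields
\[
\|A_\delta(Z^k)\nabla Z^k\|_{L^s(\Omega)}^s\le \|X^k\|_{L^r(\Omega)}^s\,\bigl(\mathcal P_\delta(Z^k)\bigr)^{s/2}.
\]
Multiplying by $\tau$, summing in $k$, and applying discrete Hölder with exponents $2/s$ and $2/(2-s)$ (note $2s/(2-s)=r$) gives
\[
\tau\sum_{k=1}^N\|A_\delta(Z^k)\nabla Z^k\|_{L^s}^s\le\Bigl(\tau\sum_k\|X^k\|_{L^r(\Omega)}^r\Bigr)^{(2-s)/2}\Bigl(\tau\sum_k\mathcal P_\delta(Z^k)\Bigr)^{s/2}.
\]
The second parenthesis is already bounded by $C(\mathcal{N}(Z^0))$ thanks to~\eqref{eq:1012}.

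It remains to control $\tau\sum_k\|X^k\|_{L^r}^r$ by $C(\mathcal N(Z^0))(1+T)$, and this is where a discrete-in-time Gagliardo--Nirenberg interpolation enters. For each candidate $f^k\in\{c_i^k,\sqrt{c_i^k},u^k,1\}$ I have at my disposal (i)~a uniform-in-$k$ bound for $\|f^k\|_{L^q(\Omega)}$ with $q=1$ (for $c_i^k$, from the $L\log L$ bound in~\eqref{eq:1012}) or $q=2$ (for $\sqrt{c_i^k}$ and $u^k$, from $c_i^k\in L^1$ and $u^k\in L^2$), and (ii)~a summed gradient bound $\tau\sum_k\|\nabla f^k\|_{L^2}^2\le C$, which is available because $\kappa_{1,i}>0$ in case~\HH\ (for $\nabla c_i^k$) or $\kappa_{0,i}>0$ in case~\HHp\ (for $\nabla\sqrt{c_i^k}$), and because $a_\delta\gtrsim 1$ by~\ref{it:H2}/\ref{it:pi.m3} (for $\nabla u^k$, via~\eqref{eq:122}). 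The Gagliardo--Nirenberg inequality
\[
\|f\|_{L^r(\Omega)}^r\lesssim\|\nabla f\|_{L^2}^{r\theta}\|f\|_{L^q}^{r(1-\theta)}+\|f\|_{L^q}^r,
\]
with $\theta$ determined by $r^{-1}=\theta(\tfrac12-\tfrac1d)+(1-\theta)q^{-1}$, produces the critical relation $r\theta=2$ precisely when $r=(2d+2)/d$ (case $q=1$), while for $q=2$ one even has slack since $L^\infty L^2\cap L^2H^1\hookrightarrow L^{(2d+4)/d}$. Summing over $k$ and invoking (i)--(ii) then gives $\tau\sum_k\|f^k\|_{L^r}^r\lesssim C(\mathcal N(Z^0))(1+T)$, and hence the claim.

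The main obstacle, as I see it, is bookkeeping: one has to verify that in both cases~\HH\ and~\HHp\ the entropy-production bound~\eqref{eq:1012} delivers exactly the gradient term needed to feed into Gagliardo--Nirenberg, and that in case~\HH\ it is $c_i^k$ itself (requiring $\kappa_{1,i}>0$) rather than $\sqrt{c_i^k}$ that must be placed in $L^r(\Omega_T)$, since the flux bound~\eqref{eq:402} is linear in $c_i$ and would otherwise only provide $L^1_{t,x}$ integrability. The exponent $s=1+\tfrac{1}{2d+1}$ is then tight for this chain of Hölder/G--N inequalities.
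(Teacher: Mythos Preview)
Your proposal is correct and follows essentially the same route as the paper: both arguments combine the pointwise flux bound from Lemma~\ref{l:efl} (together with~\ref{it:H6}) with a H\"older split at exponent $s^{-1}=r^{-1}+\tfrac12$, $r=\tfrac{2d+2}{d}$, and close via Gagliardo--Nirenberg using the critical relation $r\theta=2$ (for $p=1$) and the uniform bounds in~\eqref{eq:1012}. The only organisational difference is that the paper applies Gagliardo--Nirenberg at each fixed $k$ to obtain directly $\|A_\delta(Z^k)\nabla Z^k\|_{L^s}^s\lesssim \mathcal{P}_\delta(Z^k)+\|\nabla u^k\|_{L^2}^2+1$ and then sums, whereas you interpose a discrete H\"older in $k$ before interpolating; both orderings rely on the same exponent identity $s(1+\theta)/2=1$ and yield the stated bound.
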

	\begin{proof}Below we abbreviate $\tfrac{1}{s'}+\tfrac{1}{s}=1$, $q=\tfrac{2s}{2-s}$,
		$\theta=\tfrac{1-\frac{1}{q}}{\frac{1}{2}+\frac{1}{d}}$.
		By Lemma~\ref{l:efl} (with $A=A_\delta$), hp.~\ref{it:4}, estimate~\eqref{eq:1012} and the Gagliardo--Nirenberg inequality (Lemma~\ref{l:GN} with $q,\theta$ as defined here and $p=1$),
		\begin{align}\label{eq:105*}
		\|A_\delta(Z^k)\nabla Z^k\|_{L^s(\Omega)}\le (\mathcal{P}_\delta)^\frac{1}{2}(c^k,u^k)
		\big((\mathcal{P}_\delta)^\frac{\theta}{2}(c^k,u^k)+ \|\nabla u^k\|_{L^2}^\theta+1\big)C\big(\mathcal{N}(Z^0)\big).
		\end{align}
		When considering \HHp, GNS will be applied to $\sqrt{c_i}$ instead of $c_i$, and thanks to the time-uniform $L^1(\Om)$ control of $c_i$ we could have chosen $p=2$ in Lemma~\ref{l:GN}, thus somewhat improving the last estimate. Since this would only lead to a minor improvement of the integrability of the flux term in case~\HHp, we content ourselves with this somewhat suboptimal bound.
		\\
		Since $\tfrac{s(1+\theta)}{2}=1$, 
		the previous estimate yields
		\begin{align}\label{eq:205}
		\|A_\delta(Z^k)\nabla Z^k\|_{L^s(\Omega)}^s\le C(\mathcal{N}(Z^0))\bigg(\mathcal{P}_\delta(c^k,u^k)
		+\|\nabla u^k\|_{L^2}^2+1\bigg).
		\end{align}
		Taking the discrete time integral up to time $T$ and using once more~\eqref{eq:1012}, we infer~\eqref{eq:201}.
	\end{proof}
	
	\subsection{Limit \texorpdfstring{$(\tau,\rreg,\delta)\to0$}{}}
	\label{su:ProMainThm}
	
	\begin{proof}[Proof of Theorem~\ref{thm:main}]
		To pass to the limit $\tau\to0$ we can follow the approach in~\cite{ChenLiu_2013_globalWeak,Juengel_2015_boundednes-by-entropy,ChenDausJuengel_2018_global,ChenJuengel_2019_renormalised}. 	
		Given the sequence $(Z^k)_{k\in \mathbb{N}}$ constructed above, we define the piecewise constant interpolant function 
		\begin{align*}
		Z^{(\tau)}(t,\cdot) = Z^k\qquad\text{ if }t\in((k-1)\tau,k\tau].
		\end{align*}
		We further let $(c^{(\tau)},u^{(\tau)}):=Z^{(\tau)}$, $W^{(\tau)}=-DS(Z^{(\tau)})$ and define the discrete time derivative 
		\begin{align*}
		\partial^{(\tau)}_tZ^{(\tau)}(t,\cdot)=\tfrac{1}{\tau}(Z^k-Z^{k-1})\qquad\text{ if }t\in((k-1)\tau,k\tau].
		\end{align*}
		
		Let now $T\in (0,\infty)$ be fixed but arbitrary.
		Then, by~\eqref{eq:1012},
		\begin{equation}\label{eq:1014}
		\begin{multlined}
		\|c^{(\tau)}\|_{L^\infty(0,T;L\log L)}+\|u^{(\tau)}\|_{L^\infty(0,T;L^2)}+\|\hat\sigma_{0,-}(u^{(\tau)})\|_{L^\infty(0,T;L^1)}
		\\+\int_{(0,T)}\mathcal{P}_\delta(Z^{(\tau)}(t,\cdot))\,\dd t
		+\|\sqrt{a_\delta(Z^{(\tau)})}\nabla u^{(\tau)}\|^2_{L^2(\Omega_T)}
		\le  C(\mathcal{N}(Z^0)).
		\end{multlined}
		\end{equation}
		Observe that since $a_\delta\gtrsim 1$ (cf.~\ref{it:2} resp.~\ref{it:pi.m3}),
		estimate~\eqref{eq:1014} implies the bound
		\begin{align}\label{eq:210}
		\|\nabla u^{(\tau)}\|^2_{L^2(\Omega_T)}\le  C(\mathcal{N}(Z^0)).
		\end{align}
		
		Next, for $s$ as in Lemma~\ref{l:tmpdq}, we have by~\eqref{eq:201}
		\begin{align}
		\|A_\delta(Z^{(\tau)})\nabla Z^{(\tau)}\|_{L^s(\Omega_T)} & \le C(\mathcal{N}(Z^0),T).\label{eq:1019}
		\end{align}	
		In the remaining reasoning, we need to distinguish the cases~\HH\ and~\HHp. We provide the details only in the case~\HH, and then briefly describe how to modify the arguments to deal with case~\HHp. Thus, let us first suppose~\HH. 
		
		In order to control the reaction term, 
		we need to upgrade the space-time integrability of $(c,u)$. 
		By the Gagliardo--Nirenberg inequality (Lemma~\ref{l:GN} with $p=1$, $q=\ol q_1=2+\tfrac{2}{d}$, $\theta$ as in~\eqref{eq:103} $\leadsto \ol q_1\theta=2$), we have
		for $c=c^{(\tau)}$
		\begin{equation}
		\begin{split}
		\|c\|^{\ol q_1}_{L^{\ol q_1}(\Omega_T)}&\le C_1\int_0^T\|\nabla c\|_{L^2(\Omega)}^{\ol q_1\theta}\|c\|_{L^1(\Omega)}^{\ol q_1(1-\theta)}\,\dd t+C_2T\|c\|_{L^\infty(L^1)}^{\ol q_1}
		\\&\le C(\mathcal{N}(Z^0),T).
		\end{split}
		\label{eq:214}
		\end{equation}
		Similarly, applying Lemma~\ref{l:GN} with $p=2$, $q=\ol q_2=2+\tfrac{4}{d}$, $\theta$ as in~\eqref{eq:103} (so that again $\ol q_2\theta=2$), we find for 
		$u=u^{(\tau)}$
		\begin{equation}
		\begin{split}
		\|u\|^{\ol q_2}_{L^{\ol q_2}(\Omega_T)} &\le C_1\int_0^T\|\nabla u\|_{L^2(\Omega)}^{{\ol q_2}\theta}\|u\|_{L^2(\Omega)}^{{\ol q_2}(1-\theta)}\,\dd t+C_2T\|u\|_{L^\infty(L^1)}^{\ol q_2}
		\\&\le C(\mathcal{N}(Z^0),T).
		\end{split}
		\label{eq:215}
		\end{equation}
		
		We next assert that for $r:=\min\Big\{\tfrac{2+\frac{2}{d}}{q_1},\tfrac{2+\frac{4}{d}}{q_2},s\Big\}=\min\Big\{\tfrac{\ol q_1}{q_1},\tfrac{\ol q_2}{q_2},s\Big\}>1$ and $r'$ given by
		$\tfrac{1}{r'}+\tfrac{1}{r}=1$,
		\begin{align}
		\|\partial^{(\tau)}_tZ^{(\tau)}\|_{L^r(0,T;(W^{1,r'}(\Omega))^*)} & \le  C(\mathcal{N}(Z^0),T). \label{eq:107}
		\end{align}
		
		To show~\eqref{eq:107}, we take $\phi\in W^{1,r'}(\Omega)$ with $\|\phi\|_{W^{1,r'}(\Omega)}\le 1$. The choice of $r$  implies that $r'>d$ and hence $W^{1,r'}(\Omega)\hookrightarrow L^\infty(\Omega)$.
		Thus, for any $k\in\mathbb{N}$, 
		\begin{align}\nonumber
		\big|\int_\Omega\tau^{-1}(Z^k-Z^{k-1})\cdot\phi\,\dd x\big| & \lesssim 	\|A_\delta(Z^k)\nabla Z^k\|_{L^s(\Omega)}+\|R_\rreg(Z^k)\|_{L^1(\Omega)}
		\end{align}
		and hence  $\|\partial^{(\tau)}_tZ^{(\tau)}\|_{(W^{1,r'}(\Omega))^*}\lesssim 	\|A_\delta(Z^{(\tau)})\nabla Z^{(\tau)}\|_{L^s(\Omega)}+\|R_\rreg(Z^{(\tau)})\|_{L^1(\Omega)}$.
		Estimate~\eqref{eq:107} then follows upon taking the $L^r_t$ norm and 
		recalling~\eqref{eq:1019}, \eqref{eq:214} and~\eqref{eq:215}.
		
		We can therefore apply the Aubin--Lions lemma in the version of \cite[Theorem~1]{DreherJuengel_2012_compactFamilies} for any $T<\infty$. Choosing a sequence  $T\to\infty$ and using a diagonal argument, then allows one to infer the existence of a sequence  $(\tau,\rreg,\delta)\to0$ and
		\begin{align}
		& c\in L^\infty(0,\infty;L\log L) \text{ with }\nabla c\in L^2((0,\infty)\times\Omega),
		\\&u\in L^\infty(0,\infty;L^2)\text{ with }\nabla u\in L^2((0,\infty)\times\Omega)\label{eq:211}
		\end{align}
		such that for any $T>0$
		\begin{align}
		& (c^{(\tau)},u^{(\tau)}) \to (c,u) \quad\text{ in }L^{2}(\Omega_T),\label{eq:1013}
		\\&(\nabla c^{(\tau)},\nabla u^{(\tau)})\rightharpoonup(\nabla c,\nabla u)\quad\text{ in }L^2(\Omega_T),\label{eq:1013b}
		\\&(c^{(\tau)},u^{(\tau)})\to (c,u)\quad \text{a.e. in }\Omega_T,\label{eq:1013d}
		\end{align}
		\begin{align}\label{eq:204}\qquad
		\lim_{\tau\to0}\int_0^T\!\!\int_\Omega \partial^{(\tau)}_tZ^{(\tau)}\cdot\phi \,\dd x\,\dd t
		= -\int_0^T\!\!\int_\Omega Z\cdot\partial_t\phi \,\dd x\,\dd t-\int_\Omega Z^0\cdot\phi(0,\cdot)\,\dd x\qquad 
		\end{align}
		for any $\phi\in C^\infty_c([0,T)\times\ol\Omega)$, and 
		\begin{align}\label{eq:203}
		\partial^{(\tau)}_tZ^{(\tau)} \overset{\ast}{\rightharpoonup} \partial_tZ\quad\text{ in }L^r(0,T;(W^{1,r'})^*),
		\end{align}
		where the last two assertions are obtained as in~\cite[p.~2792f.]{ChenLiu_2013_globalWeak}.
		
		Arguing as in the proof of Prop.~\ref{prop:eps=0} (using in addition~\eqref{eq:1019}), one further has
		\begin{align}
		A_\delta(Z^{(\tau)})\nabla Z^{(\tau)}\rightharpoonup A(Z)\nabla Z\quad\text{ in }L^s(\Omega_T).\label{eq:1013f}
		\end{align}
		Also, by~\eqref{eq:1014} and~\eqref{eq:1013},
		$\nabla \sqrt{c_i^{(\tau)}}\rightharpoonup\nabla \sqrt{c_i}\text{ in }L^2(\Omega_T)$
		for all $i\in\{1,\dots,I\}$ with $\kappa_{0,i}>0$,
		and (using also~\eqref{eq:1013f} and Lemma~\ref{l:weakConv})
		$\sqrt{a_\delta(Z^{(\tau)})}\nabla u^{(\tau)}\rightharpoonup \sqrt{a(Z)}\nabla u\quad\text{ in } L^2(\Omega_T).$
		
		Concerning the reaction rates, the pointwise convergence~\eqref{eq:1013d} combined with the continuity of $R$ implies that $R_\varrho(c^{(\tau)},u^{(\tau)})\to R(c,u)$ a.e.\ in $\Om_T$ along the chosen sequence $(\tau,\varrho,\delta)\to0$.
		At the same time, the uniform bounds~\eqref{eq:214}, \eqref{eq:215} and the growth control for $R$ under Hypothesis~\ref{mainhypo1} (in case~\HH{}) guarantee that the family $\{R_\varrho(c^{(\tau)},u^{(\tau)})\}_{(\tau,\varrho,\delta)}\subset L^1(\Om_T)$ is equi-integrable. As a consequence,
		\begin{align}
		R_\varrho(c^{(\tau)},u^{(\tau)})\to R(c,u)\quad \text{ in }L^1(\Om_T).
		\end{align}
		
		By~\eqref{eq:1014}, the above convergence results, and weak(-star) lower semi-continuity,
		we obtain in the limit $(\tau,\rreg,\delta)\to0$
		\begin{multline}
		\|c\|_{L^\infty(0,T;L\log L)}+\|u\|_{L^\infty(0,T;L^2)}+\|\hat\sigma_{0,-}(u)\|_{L^\infty(0,T;L^1)}
		\\+\|a^\frac{1}{2}(c,u)\nabla u\|_{L^2(\Omega_T)}^2+\int_0^T\mathcal{P}(Z)\,\dd t
		\le C(\mathcal{N}(Z^0)).\label{eq:1014*}
		\end{multline}
		Here, the bound for $\|c\|_{L^\infty(0,T;L\log L)}$ and $\|\hat\sigma_{0,-}(u)\|_{L^\infty(0,T;L^1)}$ was obtained using~\eqref{eq:1013d} and Fatou's lemma.
		
		To infer equation~\eqref{eq:00}, we sum eq.~\eqref{eq:eps=0} (with $Z=Z^k, S=S_\delta$ and $R=R_\rreg$) from $k=1$ to $k=N$ (where $\tau N=T$). The resulting equation can be written in the form
		\begin{equation}
		\begin{split}
		\int_0^T\!\!\int_\Omega A_\delta(Z^{(\tau)}) \nabla Z^{(\tau)}:\nabla \phi \,\dd x\dd t =-\int_0^T\!\!\int_\Omega& \partial_t^{(\tau)}Z^{(\tau)}\cdot\phi\,\dd x\dd t\\&+\int_0^T\!\!\int_\Omega R_\rreg(Z^{(\tau)})\cdot\phi'\,\dd x\dd t.
		\end{split}
		\label{eq:111}
		\end{equation}
		Since this equation holds true for any $T'\in(0,T]$, the density of functions in $L^{r'}(0,T;W^{1,r'}(\Omega))$ which are piecewise constant in time (see~\cite[Prop.~1.36]{Roubicek_2013_npde}) allows us to extend eq.~\eqref{eq:111} to time-dependent test functions $\phi=(\phi',\phi_{I+1})\in  L^{r'}(0,T;W^{1,r'}(\Omega))^{I+1}$.
		Sending $(\tau,\rreg,\delta)\to0$, we then obtain in particular for all $\phi\in L^\infty(0,T;W^{1,\infty}(\Omega))^{I+1}$ the equation
		\begin{align}\label{eq:112}
		\int_0^T\langle\partial_tZ,\phi\rangle\,\dd t+\int_0^T\!\!\int_\Omega A(Z) \nabla Z:\nabla \phi \,\dd x\dd t = 
		\int_0^T\!\int_\Omega R(Z)\cdot\phi'\,\dd x\dd t,
		\end{align}  
		where we used the convergence properties established before.
		Since $W^{1,r}(0,T;(W^{1,r'})^*)\hookrightarrow C([0,T];(W^{1,r'})^*)$, we have $Z\in C([0,T];(W^{1,r'})^*)$ and~\eqref{eq:204},~\eqref{eq:203} then imply that $Z(t=0,\cdot)=Z^0$ in $(W^{1,r'}(\Omega))^*$ and thus in particular in the $(W^{1,\infty}(\Omega))^*$-sense.

		It remains to prove the conservation of the internal energy.
		Set $V:=W^{1,r'}(\Omega)$ and let $T<\infty$ be arbitrary. The fact that $\partial_tu\in L^{r}(0,T;V^*)$ (where $\partial_tu$ is understood as the distributional derivative of the Bochner function $u$) implies that for any $\psi\in V$ the function 
		$t\mapsto (u(t,\cdot),\psi)_{L^2(\Omega)}=\langle u(t,\cdot),\psi\rangle_{V^*,V}$ is absolutely continuous and for all $0\le t_1\le t_2\le T$
		\begin{align}
		(u(t_2,\cdot),\psi)_{L^2(\Omega)}-(u(t_1,\cdot),\psi)_{L^2(\Omega)}=\int_{t_1}^{t_2}\langle \partial_tu(t,\cdot),\psi\rangle_{V^*,V}\,\dd t.
		\label{eq:212}
		\end{align}
		On the other hand, choosing $\phi=(0,\dots,0,\phi_{I+1})$ with $\phi_{I+1}(t,\cdot)\equiv\chi_{[t_1,t_2]}(t)$ in equation~\eqref{eq:112} shows that the RHS of~\eqref{eq:212} vanishes for $\psi\equiv1$.  Hence, for any $t_2>0$
		$(u(t_2,\cdot),1)_{L^2(\Omega)} =(u(0,\cdot),1)_{L^2(\Omega)}$, which concludes the proof of Theorem~\ref{thm:main} under hypotheses~\HH.
		\medskip
		
		Let us now sketch the necessary modification under hypotheses~\HHp.
		We first note that, like in the derivation of~\eqref{eq:214},~\eqref{eq:215}, we may appeal to the 
		Gagliardo--Nirenberg inequality (cf.\ Lemma~\ref{l:GN}) to infer that
		\begin{align}
		\|c^{(\tau)}\|_{L^{\tilde q_1}(\Omega_T)} + \|u^{(\tau)}\|_{L^{\tilde q_2}(\Omega_T)}
		\le C(T,\mathcal{N}(Z^0))
		\end{align}
		for the exponents $\tilde q_1=1+\tfrac{2}{d}$ and $\tilde q_2=2+\tfrac{4}{d}$. Here, we have also used the bound~\eqref{eq:1014}, which provides us among others with $(\tau,\varrho,\delta)$-uniform bounds for $\nabla\sqrt{c_i^{(\tau)}}$ and $\nabla u^{(\tau)}$ in $L^2(\Om_T)$.
		Since $\kappa_{1,i}=0$, only $\nabla\sqrt{c_i^{(\tau)}}$ (but not $\nabla c_i^{(\tau)}$) is controlled in $L^2(\Om_T)$.
		In order to obtain compactness, we therefore have to apply a nonlinear version of the Aubin--Lions lemma. Such a result covering our situation has been provided in~\cite{CJL_2014}. 
		Under the growth hypotheses on the reactions for model~\HHp\ (cf.\ Hypothesis~\ref{mainhypo1}) and the flux bound~\eqref{eq:1019}, 
		we easily obtain a $(\tau,\rreg,\delta)$-uniform bound of the form
		\begin{equation*}
		\|\sqrt{c_i^{(\tau)}}\|_{L^2(0,T;H^1(\Om))}+\|\partial_tc_i^{(\tau)}\|_{L^1(0,T;H^n(\Om)^*)} \le C(T,\mathrm{data})
		\end{equation*}
		for sufficiently large $n\in \mathbb{N}$ (using the embedding $W^{1,s'}(\Om)^*\hookrightarrow H^n(\Om)^*$, which holds true e.g.\ for $n>\tfrac{d}{2}+1$). Applying~\cite[Theorem~3]{CJL_2014}, we deduce 
		that, along a subsequence,   $c^{(\tau)}_i\to c_i$ in $L^1(0,T;L^{1+\epsilon}(\Om))$ for all $\epsilon\in[0,\tfrac{1}{(1-\frac{2}{d})_+})$ and a.e.\ in~$\Om_T$.
		The remaining arguments are as before.
	\end{proof}

	\section{Preliminaries for Theorem~\ref{theoremexistence}}
	\label{sec:prelim2}
	
	From now on, we suppose Hypotheses~\ref{mainhypo2} (in place of
	Hypotheses~\ref{mainhypo1}).  We then let $\varepsilon,\rreg>0$ denote small
	positive parameters, consider the Onsager matrix
	\begin{align}\label{eq:mobility2reg"}
	\mathbb M^\varepsilon(c, u) := \diag \big( m_1^\varepsilon,\dots, 
	m_I^\varepsilon,0 \big) + \pi_{1}(Z)\mu\otimes\mu,
	\end{align}
	where $m_i^\ve=c_ia_i^\ve(c,u)$, $a_i^\ve(c,u)\sim \ve c_i+\kappa_{0,i}$,
	satisfy~\ref{it:mi1} with $\kappa_{1,i}=\ve>0$ for all $i\in\{1,\dots, I\}$,
	and choose the regularised reactions $R_\rreg(Z)=\tfrac{1}{1+\rreg |R(Z)|}R(Z)$
	(cf.~\eqref{eq:defRrho}).  Since the choice $\mathbb{M}=\mathbb M^\varepsilon$,
	$R=R_\rreg$ clearly satisfies Hypotheses~\ref{mainhypo1} (with $q_1=q_2=0$),
	Theorem~\ref{thm:main} provides us with a family of functions $Z=(c,u)$
	(depending on $\varepsilon,\rreg>0$) with the regularity
	\begin{itemize}
		\item[] $A^\varepsilon(Z)\nabla Z\in L^s((0,\infty)\times\Om)$ for $s \coleq \tfrac{2d+2}{2d+1}$, \;
		where $A^\varepsilon:=-\mathbb M^\varepsilon D^2S_0$,
		\item[] $\partial_tZ\in L^s_\mathrm{loc}(0,\infty; W^{1,s'}(\Omega)^*)$ for 
		$s' \coleq 2d + 2$
	\end{itemize}
	emanating from the initial datum $Z^0$ (in the $W^{1,\infty}(\Om)^*$ sense) that
	satisfy for all $T>0$ the weak formulation 
	\begin{equation}\label{eq:00.ii}
	\int_0^T\langle\partial_tZ,\phi\rangle\,\dd t
	+\int_0^T\!\!\int_\Omega(A^\varepsilon(Z)\nabla Z):\nabla \phi\,\dd x\dd t=\int_0^T\!\!\int_\Omega R_\rreg(Z)\cdot\phi'\,\dd x\dd t
	\end{equation}
	for any $\phi=(\phi',\phi_{I+1})\in L^\infty(0,T;W^{1,\infty}(\Om))^{I+1}$, enjoy the $\rreg$-independent bound
	\begin{equation}\label{equnifbounds}
	\begin{split}
	\|c\|_{L^\infty(0,\infty;L\log L)}& + \|u\|_{L^\infty(0,\infty;L^2)} \\
	&+\int_0^T\!\!\int_\Om P_\ve(Z)\,\dd x\dd t + \int_0^T\!\!\int_\Om a(Z)|\nabla u|^2\,\dd x\dd t
	\le C(\mathrm{data}),
	\end{split}
	\end{equation}
	where $\mathrm{data} := (\|c^0\|_{L\log L}, \|u^0\|_{L^2}, \|\hat\sigma(u^0)\|_{L^1})$, and where $P_\ve(Z)$ denotes the quantity $P(Z)$ defined in~\eqref{eq:401} with $\kappa_{1,i}=\ve$.
	These solutions conserve the internal energy, i.e.~$\int u(t,x)\,\dd x=\int u^0(x)\,\dd x$ for any $t>0$, and for later usage we further note the $(\ve,\rreg)$-uniform bound
	\begin{equation}\label{eq:ut2}
	\|\partial_tu\|_{L^s(0,T;W^{1,s'}(\Om)^*)}\le C(\mathrm{data}),
	\end{equation}
	which is a consequence of~\eqref{equnifbounds} and the bound~\eqref{eq:102b} on the heat flux.
	The functions $Z=Z^{\ve,\rreg}$ now serve as approximate solutions to the model considered in Theorem~\ref{theoremexistence}.
	
	In this section, we gather some technical tools and derive a renormalised formulation for the weak solutions $Z=(c,u)$. Throughout this section, $\varepsilon,\rreg>0$ will be kept fixed, and the dependence of  $Z$ and of $P(Z):=P_\ve(Z)$ on $\ve$ and $\rreg$ will not be explicitly indicated.

	\subsection{Truncation functions}
	
	As in \cite{ChenJuengel_2019_renormalised} and \cite{Fischer_2015_renormalized}, we employ the family of special cut-off functions
	\begin{equation}
	\label{eqphiie}
	\varphi_i^E(Z) \coleq Z_i \phi \bigg( \frac{\sum_{k=1}^{I+1} Z_k - E}{E} \bigg) + 3E \bigg( 1 - \phi \bigg( \frac{\sum_{k=1}^{I+1} Z_k - E}{E} \bigg) \bigg)
	\end{equation}
	for $i \in \{1, \dotsc, I\}$ and $Z \in \mathbb R^{I+1}$ where $\phi \in C^\infty(\mathbb R, [0,1])$ is a fixed non-increasing function with the property $\phi = 1$ on $(-\infty,0]$ and $\phi = 0$ on $[1,\infty)$.

	The following properties of $\varphi^E_i$ are easily verified.
	\begin{lemma}
		The truncations $\varphi_i^E(Z)$ fulfill the following conditions.
		\begin{enumerate}[label=\normalfont{(C\arabic*)}]
			\item\label{it:C1} $\varphi_i^E \in C^\infty((\mathbb R_0^+)^{I+1})$.
			\item\label{it:C2}
			There exists a constant $K_1 > 0$ such that for all $E>0$ and $Z \in (\mathbb R_0^+)^{I+1}$
			\begin{equation*}
			|Z||D^2\varphi^E(Z)|\le K_1.
			\end{equation*}
			\item\label{it:C3} For all $E>0$, the set $\supp D \varphi_i^E$ is compact.
			More specifically, $(D\varphi_i^E)(Z) = 0$ for all $Z\in \mathbb R^{I+1}$ satisfying $\sum_{j=1}^{I+1} Z_j \geq 2E$.
			\item\label{it:C4} For all $i,j \in \{1,\dotsc,I+1\}$ and $Z \in (\mathbb R_0^+)^{I+1}$, we have $\lim_{E \rightarrow \infty} \partial_j \varphi_i^E(Z) = \delta_{ij}$.
			\item\label{it:C5} There exists a constant $K_2 > 0$ such that $|D\varphi_i^E(Z)| \leq K_2$ holds true for all $E>0$ and $Z \in (\mathbb R_0^+)^{I+1}$.
			\item\label{it:C6} For all $Z \in (\mathbb R_0^+)^{I+1}$ with $\sum_{j=1}^{I+1} Z_j < E$, we have $\varphi_i^E(Z) = Z_i$.
			\item\label{it:C7} For all $j,k \in \{1,\dotsc,I+1\}$ and $K > 0$, we have $$\lim_{E \rightarrow \infty} \sup_{|Z| \leq K} |\partial_j \partial_k \varphi_i^E(Z)| = 0.$$
			\item\label{it:C8} For all $Z \in (\mathbb R_0^+)^{I+1}$, and $E \in \mathbb N$, we have
			\[
			\varphi_i^E(Z) \leq Z_i + 3 \sum_{j=1}^{I+1} Z_j.
			\]
			\item\label{it:C9} For all $E \geq E_0 \geq 0$ and $Z \in (\mathbb R_0^+)^{I+1}$, we have 
			\[
			\sum_{i=1}^I Z_i \geq E_0 \ \Rightarrow \ \sum_{i=1}^I \varphi_i^E(Z) \geq E_0.
			\]
		\end{enumerate}
	\end{lemma}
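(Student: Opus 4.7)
The plan is to work directly from the definition \eqref{eqphiie}. Writing $\sigma(Z) := \sum_{k=1}^{I+1} Z_k$ and $\eta^E(Z) := \phi\bigl(\tfrac{\sigma(Z)-E}{E}\bigr)$, I would first rewrite
\[
\varphi_i^E(Z) = Z_i\,\eta^E(Z) + 3E\bigl(1-\eta^E(Z)\bigr) = 3E + (Z_i-3E)\eta^E(Z),
\]
which makes the computations transparent. By the properties of $\phi$, the scalar function $\eta^E$ takes values in $[0,1]$, equals $1$ where $\sigma\le E$, equals $0$ where $\sigma\ge 2E$, and all of its partial derivatives are supported in the annular set $\{E\le \sigma \le 2E\}$.

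The easy items follow almost directly. Item \ref{it:C1} holds because $\phi\in C^\infty$ and $\sigma$ is a polynomial. Item \ref{it:C6} is immediate: if $\sigma<E$ then $\eta^E=1$ and $\varphi_i^E=Z_i$. Item \ref{it:C3} follows because $D\varphi_i^E = e_i\eta^E + (Z_i-3E)D\eta^E$, and both $\eta^E$ on the vector $e_i$ (beyond it being replaced by $0$ where $\eta^E=0$) and $D\eta^E$ vanish on $\{\sigma\ge 2E\}$; since this region contains a neighborhood of infinity, the support of $D\varphi_i^E$ is contained in the compact set $(\mathbb{R}_0^+)^{I+1}\cap\{\sigma\le 2E\}$. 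Item \ref{it:C4} follows from \ref{it:C6}: for fixed $Z$ and $E>\sigma(Z)$ one has $\varphi_i^E(Z)=Z_i$, so $\partial_j\varphi_i^E(Z)=\delta_{ij}$ eventually. For \ref{it:C8} I would split the analysis into the regions $\sigma\le E$ (where $\varphi_i^E=Z_i$), $E\le\sigma\le 2E$ (where $\varphi_i^E\le Z_i + 3E \le Z_i+3\sigma$), and $\sigma\ge 2E$ (where $\varphi_i^E=3E\le\tfrac{3}{2}\sigma$). Item \ref{it:C9} would be handled by the same three-case split: letting $T:=\sum_{i=1}^I Z_i\ge E_0$, one has $\sum_{i=1}^I \varphi_i^E = T\eta^E + 3IE(1-\eta^E)$, and since both $T\ge E_0$ and $3IE\ge E\ge E_0$, the convex combination is at least $E_0$.

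For the derivative bounds \ref{it:C2}, \ref{it:C5}, and \ref{it:C7}, I would exploit the scaling $\partial_k\eta^E = E^{-1}\phi'(\cdot)$ and $\partial_j\partial_k\eta^E = E^{-2}\phi''(\cdot)$, together with the support restriction $\{E\le\sigma\le 2E\}$ on these derivatives. Direct differentiation gives
\[
\partial_j\varphi_i^E = \delta_{ij}\eta^E + \frac{Z_i-3E}{E}\phi'\Bigl(\tfrac{\sigma-E}{E}\Bigr),\qquad \partial_j\partial_k\varphi_i^E = \frac{\delta_{ij}+\delta_{ik}}{E}\phi' + \frac{Z_i-3E}{E^2}\phi''.
\]
On $\supp\phi'\cup\supp\phi''$ one has $\sigma\in[E,2E]$, so $0\le Z_i\le 2E$ and hence $|Z_i-3E|\le 3E$. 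This gives $|D\varphi_i^E|\le 1+3\|\phi'\|_\infty=: K_2$, proving \ref{it:C5}. Similarly $|D^2\varphi_i^E|\le 2E^{-1}\|\phi'\|_\infty + 3E^{-1}\|\phi''\|_\infty$, and since $|Z|\le\sigma\le 2E$ on the support, one obtains $|Z||D^2\varphi_i^E|\le 4\|\phi'\|_\infty + 6\|\phi''\|_\infty =: K_1$, proving \ref{it:C2}. For \ref{it:C7}, if $|Z|\le K$ and $E> \sqrt{I+1}\,K$, then $\sigma\le\sqrt{I+1}\,|Z|< E$, so $\eta^E\equiv 1$ in a neighbourhood of $Z$, whence $D^2\varphi_i^E(Z)=0$.

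The only subtle point is making sure the Hessian bound in \ref{it:C2} is uniform in $E$; this is exactly where the choice of the constant $3E$ (rather than, say, $E$) in the definition \eqref{eqphiie} is used, so that the bound $|Z_i - 3E|\le 3E$ on the support of $\phi'$ and $\phi''$ absorbs the factor $E^{-2}$ when combined with $|Z|\le 2E$. Every other item is an elementary consequence of the structure of $\eta^E$, so no serious obstacle is anticipated.
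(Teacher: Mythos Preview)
Your proposal is correct and matches the paper's approach: the paper simply states that these properties ``are easily verified'' and gives no proof, so your direct computation from the definition \eqref{eqphiie} is exactly what is intended. One minor remark: your closing comment that the constant $3E$ (rather than $E$) is what makes \ref{it:C2} uniform in $E$ is not quite accurate---on the support one has $|Z_i-cE|\le\max\{c,|2-c|\}E$ for any $c>0$, so the bound would go through equally well with $c=1$; the choice $3E$ is inherited from the references and is not essential for the properties listed here.
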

	
	\subsection{A weak chain rule for truncated solutions}
	
	The remaining part of this section is devoted to deriving an evolution equation for $\varphi^E_i(Z)$, as asserted in the following proposition. 
	\begin{proposition}
		\label{propchainrule}
		For all $T > 0$, $E \in \mathbb N$, $i \in \{1, \dotsc, I\}$, and $\psi \in C^\infty_c([0,T)\times\ol\Omega)$, 
		the weak solution $Z=(c, u)$ of eq.~\eqref{eq:00.ii} satisfies the following equation:
		\begin{align}\label{eqchainrule}   
		\begin{aligned}
		- \int_\Omega \varphi_i^E(Z^0) & \psi(0, \cdot) \, \dd x - \int_0^T\!\!\int_\Omega\!\!\varphi_i^E(Z) \frac{d}{dt} \psi \,\dd x\dd t                      \\
		=                              & - \sum_{j,k,l=1}^{I+1}\int_0^T\!\!\int_\Omega \psi \, \partial_j \partial_k \varphi_i^E (Z)A^\ve_{jl}(Z)\nabla Z_l\cdot\nabla Z_k\,\dd x\dd t
		\\& - \sum_{j=1}^{I+1} \int_0^T\!\!\int_\Omega \partial_j \varphi_i^E (Z) A^\ve_{jl}(Z)\nabla Z_l\cdot\nabla \psi\,\dd x\dd t
		\\& + \sum_{j=1}^I \int_0^T\!\!\int_\Omega \psi \, \partial_j \varphi_i^E (Z)R_{\rreg,j}(Z)\,\dd x\dd t.
		\end{aligned}
		\end{align}
	\end{proposition}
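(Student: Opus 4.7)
The plan is to derive \eqref{eqchainrule} by formally testing the weak formulation \eqref{eq:00.ii} with $\phi = \psi\,D\varphi_i^E(Z)$ and invoking a chain rule for the composition $\varphi_i^E(Z)$. The obstruction is that this candidate test function is not a priori admissible in the class $L^\infty(0,T;W^{1,\infty}(\Omega))$; I will overcome this by a time-regularization argument.

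First I verify that every term in \eqref{eqchainrule} is integrable. By properties \ref{it:C2}--\ref{it:C5} the derivatives $D\varphi_i^E$ and $D^2\varphi_i^E$ are bounded (with constants depending on $E$) and $\supp D\varphi_i^E\subset\{|Z|\le 2E\}$. Because $\kappa_{1,i}=\varepsilon>0$ in the regularised system, the solution from Theorem~\ref{thm:main} has $\nabla Z\in L^2((0,T)\times\Omega)$, so Lemma~\ref{l:efl} yields $\chi_{\{|Z|\le 2E\}}\,A^\varepsilon(Z)\nabla Z\in L^2((0,T)\times\Omega)$. Consequently the linear flux term $\partial_j\varphi_i^E(Z)A^\varepsilon_{jl}(Z)\nabla Z_l\cdot\nabla\psi$ lies in $L^2$, the quadratic flux term $\partial_j\partial_k\varphi_i^E(Z)A^\varepsilon_{jl}(Z)\nabla Z_l\cdot\nabla Z_k$ lies in $L^1$, and the reaction contribution is bounded since $R_\rreg$ is bounded by $1/\rreg$.

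Next I mollify $Z$ in time. Extending $Z$ to negative times by $Z^0$ and convolving with a smooth non-negative kernel $\rho_\eta$ supported in $(-\eta,\eta)$ yields $Z^\eta$ that is $C^\infty$ in $t$ with values in $H^1(\Omega)$ and converges to $Z$ strongly in $L^2_{\mathrm{loc}}([0,T];H^1(\Omega))$ and a.e.\ as $\eta\downarrow 0$, while $\partial_t Z^\eta=(\partial_t Z)*_t\rho_\eta$ in the distributional sense. Smoothness of $Z^\eta$ in $t$ legitimises the pointwise chain rule
\begin{equation*}
\partial_t\varphi_i^E(Z^\eta)=\sum_{j=1}^{I+1}\partial_j\varphi_i^E(Z^\eta)\,\partial_t Z^\eta_j.
\end{equation*}
Multiplying by $\psi$, integrating over $(0,T)\times\Omega$, integrating by parts in $t$ using $\psi(T,\cdot)=0$, and replacing $\partial_t Z^\eta_j$ by the mollification of the divergence-plus-reaction expression from \eqref{eq:00.ii} tested against $\psi\,\partial_j\varphi_i^E(Z^\eta)$ produces an identity that, after expanding $\nabla[\psi\,\partial_j\varphi_i^E(Z^\eta)]$ via the product rule and a second chain rule, has the same structure as \eqref{eqchainrule} but with $Z^\eta$ in place of $Z$ and $\rho_\eta$-convolved fluxes in place of the original ones.

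Finally I pass to the limit $\eta\downarrow 0$. Strong convergence $Z^\eta\to Z$ in $L^2_t H^1_x$ and a.e.\ along a subsequence, combined with continuity of $A^\varepsilon$, $R_\rreg$, and $D^2\varphi_i^E$ allow each term to converge to its counterpart in \eqref{eqchainrule} by dominated convergence, with integrable majorants provided by the first step. The extension $Z(t)=Z^0$ for $t<0$ gives $\varphi_i^E(Z^\eta(0,\cdot))\to\varphi_i^E(Z^0)$ in $L^1(\Omega)$, taking care of the initial term. The main technical hurdle is the quadratic flux term $\partial_j\partial_k\varphi_i^E(Z)A^\varepsilon_{jl}(Z)\nabla Z_l\cdot\nabla Z_k$: its convergence requires strong $L^2$-convergence $\nabla Z^\eta\to\nabla Z$, which is precisely where the self-diffusion $\varepsilon>0$ is essential, since it supplies the $L^2$-bound on the full gradient that standard mollifier properties then turn into strong convergence.
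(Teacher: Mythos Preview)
Your time-mollification approach differs from the paper's spatial mollification, and there is a genuine integrability gap that your sketch does not close.

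The problematic step is the one you flag as the ``main technical hurdle'' but then dismiss too quickly. After time mollification you obtain a term of the form
\[
\int_0^T\!\!\int_\Omega \psi\, \partial_j\partial_k\varphi_i^E(Z^\eta)\,\big(A^\varepsilon(Z)\nabla Z\big)^\eta_j\cdot\nabla Z^\eta_k\,\dd x\dd t,
\]
where $(\cdot)^\eta$ denotes convolution in time. The truncation $D^2\varphi_i^E(Z^\eta)$ is supported on $\{|Z^\eta|\le 2E\}$, but the time-mollified flux $(A^\varepsilon(Z)\nabla Z)^\eta(t,x)=\int\rho_\eta(t{-}\tau)\,A^\varepsilon(Z(\tau,x))\nabla Z(\tau,x)\,\dd\tau$ samples $Z$ at times $\tau\in(t{-}\eta,t{+}\eta)$ where $|Z(\tau,x)|$ is not controlled by $|Z^\eta(t,x)|$. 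Hence on $\{|Z^\eta|\le 2E\}$ you cannot invoke the $L^2$ bound \eqref{eq:105b}; you only have the global flux bound $A^\varepsilon(Z)\nabla Z\in L^s(\Omega_T)$ with $s=\tfrac{2d+2}{2d+1}$, and since $\tfrac{1}{s}+\tfrac{1}{2}>1$ the product with $\nabla Z^\eta\in L^2$ is not in $L^1$. The same obstruction prevents you from justifying the integration by parts needed to ``test with $\psi\,\partial_j\varphi_i^E(Z^\eta)$'': this function lies only in $H^1(\Omega)$, not in $W^{1,s'}(\Omega)$, so the duality pairing with $\partial_t Z^\eta_j\in (W^{1,s'})^*$ is formal. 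Strong convergence $\nabla Z^\eta\to\nabla Z$ in $L^2$ does not help here, because the other factor is only $L^s$.

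The paper faces exactly the same mismatch but with spatial mollification: the truncation $D^2\varphi_i^E(\rho_\eta\!\ast\! Z)$ is supported where the spatial average is bounded, while the mollified flux samples nearby points $y\in B(x,\eta)$. This is resolved in Lemma~\ref{lemmachainrulelimit} by a delicate local argument: one writes $J_\eta\cdot K_\eta$, uses the Poincar\'e-type estimate to control the local $L^2$ norm of $Z$ on $B(x,3\eta)$ by $\eta\|\nabla Z\|_{L^2(B(x,3\eta))}$ plus $\inf_{B(x,\eta)}|\rho_\eta\!\ast\!Z|$, and then exploits the crucial property~\ref{it:C2}, namely $|Z||D^2\varphi_i^E(Z)|\le K_1$, together with~\ref{it:C3} to kill the $\inf$ term whenever it exceeds $2E$. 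This yields the uniform-in-$\eta$ majorant \eqref{eqboundjetaketa}, after which a Vitali-type argument closes the limit. No analogue of this local-in-space mechanism is available for time mollification, because the temporal oscillation of $Z$ on $(t{-}\eta,t{+}\eta)$ is not controlled by any gradient.
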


	 The proof of this proposition is based on the techniques used in the proof of \cite[Lemma~11]{ChenJuengel_2019_renormalised}, which in turn employs the approximate chain rule established in \cite[Lemma~4]{Fischer_2015_renormalized} and stated below in a form adapted to our  notations. While in the present section we only use a special case of this result, the more general version formulated below will be needed in Section~\ref{ssec:proof.renorm} for the proof of Theorem~\ref{theoremexistence}.
	
	\begin{lemma}[{\cite[p.~578, Lemma~4]{Fischer_2015_renormalized}}]
		\label{lemmachainrulefischer}
		Let $\Omega \subset \mathbb R^d$ be a bounded domain with Lipschitz
		boundary. Let $T > 0$, $v \in L^2(0,T; H^1(\Omega)^{I+1})$, and
		$v_0 \in L^1(\Omega)^{I+1}$. Moreover, let
		$\nu_j \in \mathcal M([0,T) \times \ol \Omega)$ be a Radon measure,
		$w_j \in L^1(0,T; L^1(\Omega))$, and $z_j \in L^2(0,T; L^2(\Omega)^d)$ for
		$j \in \{1, \dotsc, I+1\}$.
		
		Suppose that $v$ satisfies for all $\psi \in C^\infty_c([0,T)\times\ol\Omega)$
		and all $j \in \{1, \dotsc, I+1\}$ the identity \vspace*{-1ex}
		\begin{multline}
		\label{eqchainrulefischerassump}
		-\int_0^T \int_\Omega v_j \frac{d}{dt} \psi \, \dd x \dd
		t  - \int_\Omega (v_0)_j \psi(0, \cdot) \, \dd x \\
		= \int_{\ol \Omega \times [0,T)} \psi \, \dd\nu_j + \int_0^T \int_\Omega \psi
		\, w_j \, \dd x \dd t + \int_0^T \int_\Omega z_j \cdot \nabla \psi \, \dd x
		\dd t.
		\end{multline}
		Let $\xi : \mathbb R^{I+1} \rightarrow \mathbb R$ be a smooth function with
		compactly supported first derivatives. Then, there exists a constant
		$C(\Omega) > 0$ such that for all $\psi \in C^\infty_c([0,T)\times\ol\Omega)$,
		\begin{align*}
		\bigg| &-\int_0^T \int_\Omega \xi(v) \frac{d}{dt} \psi \, \dd x \dd t 
		- \int_\Omega \xi(v_0) \psi(0, \cdot) \, \dd x 
		- \sum_{j=1}^{I+1} \sum_{k=1}^{I+1} \int_0^T \int_\Omega \psi \partial_j 
		\partial_k \xi (v) z_j \cdot \nabla v_k \, \dd x \dd t \\
		&- \sum_{j=1}^{I+1} \int_0^T \int_\Omega \partial_j \xi (v) z_j \cdot \nabla
		\psi \, \dd x \dd t - \sum_{j=1}^{I+1} \int_0^T \int_\Omega 
		\psi \partial_j \xi (v) w_j \, \dd x \dd t \bigg| \\
		&\qquad\qquad \leq C(\Omega) \| \psi \|_{L^\infty} \sup_{\tilde v \in \mathbb R^{I+1}} 
		|D\xi(\tilde v)| \, \sum_{j=1}^{I+1} \| \nu_j \|_{\mathcal M([0,T) \times \ol \Omega)}.
		\end{align*}
	\end{lemma}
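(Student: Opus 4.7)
The plan is to invoke Lemma~\ref{lemmachainrulefischer} with $v = Z$, $v_0 = Z^0$, and $\xi = \varphi_i^E$, which is an admissible test function by properties~\ref{it:C1} and~\ref{it:C3}. The regularity $v \in L^2(0,T; H^1(\Omega)^{I+1})$ is immediate from \eqref{equnifbounds}: since $\kappa_{1,i}=\ve>0$, the entropy production bound yields $\nabla c_i \in L^2(\Omega_T)$, and since $a(Z)\gtrsim 1$ under~\ref{it:H2}, the energy estimate gives $\nabla u \in L^2(\Omega_T)$. Testing \eqref{eq:00.ii} with $\phi = \psi e_j$ for $\psi \in C^\infty_c([0,T)\times\overline{\Omega})$ and integrating by parts in time via $\partial_t Z \in L^s(0,T;W^{1,s'*})$ (see~\eqref{eq:ut2}), we recast \eqref{eq:00.ii} in the form \eqref{eqchainrulefischerassump} componentwise with
\[
z_j := -\sum_{l=1}^{I+1} A^\ve_{jl}(Z)\,\nabla Z_l, \qquad w_j := R_{\rreg,j}(Z)\,\chi_{\{j\le I\}}, \qquad \nu_j := 0.
\]
The reaction terms $w_j$ lie in $L^\infty(\Omega_T)\subset L^1$ because $R_\rreg$ is bounded by $1/\rreg$. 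However, by Lemma~\ref{l:efl}, $z_j$ is only controlled in $L^s(\Omega_T)$ with $s=\tfrac{2d+2}{2d+1}<2$, which falls short of the $L^2$ requirement of Lemma~\ref{lemmachainrulefischer}.

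To remedy this, for each $M\ge 2E$ we introduce a smooth cut-off $\chi_M\in C^\infty_c([0,\infty))$ with $\chi_M=1$ on $[0,M]$ and $\chi_M=0$ on $[2M,\infty)$, and set $z_j^M:=\chi_M(|Z|)\,z_j$. Since $A^\ve(Z)$ is continuous and hence bounded on $\{|Z|\le 2M\}$, $z_j^M\in L^2(\Omega_T)$. Replacing $z_j$ by $z_j^M$ in \eqref{eqchainrulefischerassump} introduces a residual functional $\psi\mapsto\int r_j^M\cdot\nabla\psi$, where $r_j^M:=(1-\chi_M(|Z|))\,z_j\in L^1(\Omega_T)$. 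Via a time-mollification argument combined with the weak formulation (used to express $\nabla\cdot r_j^M$ in terms of $\partial_t Z_j - \nabla\cdot z_j^M - w_j$, each of which enjoys appropriate structural bounds), we identify this linear functional with integration against a signed Radon measure $\nu_j^M\in\mathcal{M}([0,T)\times\overline{\Omega})$ whose total variation is controlled by $C\,\|r_j^M\|_{L^1(\Omega_T)}$. Since $A^\ve(Z)\nabla Z\in L^1(\Omega_T)$ by Lemma~\ref{l:efl}, dominated convergence yields $\|\nu_j^M\|_{\mathcal{M}}\to 0$ as $M\to\infty$.

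With the data $(z_j^M, w_j, \nu_j^M)$ in place, Lemma~\ref{lemmachainrulefischer} produces the chain-rule identity for $\xi = \varphi_i^E$ up to an error of size $C(\Omega)\,\|\psi\|_\infty\,\sup|D\varphi_i^E|\,\sum_j\|\nu_j^M\|_{\mathcal{M}}$. Crucially, by~\ref{it:C3} the integrands on the right-hand side of \eqref{eqchainrule} carry the factors $\partial_j\varphi_i^E(Z)$ and $\partial_j\partial_k\varphi_i^E(Z)$, which vanish outside $\{|Z|\le 2E\}$; there $\chi_M(|Z|)=1$ for $M\ge 2E$, so $z_j^M$ coincides with $z_j$ on the support of the relevant integrand. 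Passing $M\to\infty$, the error vanishes and the main terms reproduce the right-hand side of \eqref{eqchainrule}, yielding the identity.

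The main technical obstacle is the second step: the distributional divergence of a generic $L^1$ vector field is only a distribution of order one, so representing $\nabla\cdot r_j^M$ by a Radon measure with total variation bounded by $\|r_j^M\|_{L^1}$ is not automatic. The resolution exploits the PDE structure satisfied by $Z$: once the flux is split into an $L^2$ part and an $L^1$ residual, the residual's divergence is tied by the weak formulation to $\partial_t Z_j$, the divergence of the truncated flux, and the reaction term, yielding the required representation through a time-mollification/duality argument of the type employed in \cite[Lemma~4]{Fischer_2015_renormalized} and \cite[Lemma~11]{ChenJuengel_2019_renormalised}.
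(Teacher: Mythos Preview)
Your proposal addresses Proposition~\ref{propchainrule}, not Lemma~\ref{lemmachainrulefischer}: the latter is quoted from \cite{Fischer_2015_renormalized} without proof, and you are invoking it rather than proving it. I will therefore compare your argument against the paper's proof of Proposition~\ref{propchainrule}.

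Your truncation strategy has a genuine gap at the step where you claim the residual functional $\psi\mapsto\int r_j^M\cdot\nabla\psi$ extends to a Radon measure $\nu_j^M$ with $\|\nu_j^M\|_{\mathcal M}\lesssim\|r_j^M\|_{L^1}$. A functional of this form with $r_j^M\in L^1$ is a first-order distribution, bounded by $\|r_j^M\|_{L^1}\|\nabla\psi\|_{L^\infty}$ but in general not by $C\|\psi\|_{L^\infty}$. Your proposed rescue via the PDE does not help: expressing $\nabla\cdot r_j^M$ through the weak formulation yields the combination $\partial_t Z_j-\nabla\cdot z_j^M-w_j$, in which $\partial_t Z_j\in L^s(0,T;W^{1,s'}(\Omega)^*)$ and $\nabla\cdot z_j^M\in L^2(0,T;H^{-1}(\Omega))$ are genuine negative-order objects, not measures. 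No time-mollification or duality trick converts this sum into a Radon measure while preserving the bound you need; the references you cite (\cite[Lemma~4]{Fischer_2015_renormalized}, \cite[Lemma~11]{ChenJuengel_2019_renormalised}) do not provide such a representation either---they carry out a mollification argument, which is a different mechanism.

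The paper avoids this obstruction by \emph{not} trying to fit the $L^s$ flux into the hypotheses of Lemma~\ref{lemmachainrulefischer}. Instead it repeats the proof of that lemma in adapted form: mollify the test function $\psi\to\rho_\eta\ast\psi$, integrate by parts so that $\divv(\rho_\eta\ast z_j)$ becomes an honest $L^1$ source, apply the elementary chain rule of Lemma~\ref{lemmachainlemmafischer}, and then pass $\eta\to0$. The delicate limit in the quadratic gradient term is handled in Lemma~\ref{lemmachainrulelimit}, which exploits the compact support of $D\varphi_i^E$ together with the scale-invariant bound $|Z|\,|D^2\varphi_i^E(Z)|\le K_1$ from~\ref{it:C2}. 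Your intuition that the support condition~\ref{it:C3} localises the problem to $\{|Z|\lesssim E\}$ is correct and is precisely what makes the paper's limiting argument work, but the route through a measure-valued defect $\nu_j^M$ is not viable.
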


Unfortunately, Lemma~\ref{lemmachainrulefischer} is not directly applicable for our proof of Prop.~\ref{propchainrule}, and we need to repeat the arguments from the proof of Lemma~\ref{lemmachainrulefischer} in~\cite{Fischer_2015_renormalized}, slightly extending them. We only provide the argument away from the boundary, as the argument near the boundary is analogous but technical. In the course of the proof of Prop.~\ref{propchainrule}, we make use of the following standard chain-rule lemma (see e.g.\ \cite[p.~583, Lemma 5]{Fischer_2015_renormalized}). 

	\begin{lemma}[Cf.~{\cite{Fischer_2015_renormalized}}]
	\label{lemmachainlemmafischer}
	Let $\Omega \subset \mathbb R^d$ be a bounded domain. Let $T > 0$, $v \in L^1(0,T; L^1(\Omega)^{I+1})$, and
	$v_0 \in L^1(\Omega)^{I+1}$. Moreover, let
	$w_j \in L^1(0,T; L^1(\Omega))$ for $j \in \{1, \dotsc, I{+}1 \}$.
	
	Suppose that $v$ satisfies for all $\psi \in C^\infty_c([0,T)\times\Omega)$
	and all $j \in \{1, \dotsc, I{+}1 \}$ the identity 
	\[
	-\int_0^T \int_\Omega v_j \frac{d}{dt} \psi \, \dd x \dd
	t  - \int_\Omega (v_0)_j \psi(0, \cdot) \, \dd x \\
	= 	\int_0^T \int_\Omega \psi\, w_j \, \dd x \dd t.
	\]
	Let $\xi : \mathbb R^{I+1} \rightarrow \mathbb R$ be a smooth function with
	compactly supported first derivatives.
	Then, 
	\begin{align*}
		-\int_0^T \int_\Omega \xi(v) \frac{d}{dt} \psi \, \dd x \dd t 
		- \int_\Omega \xi(v_0) \psi(0, \cdot) \, \dd x - \sum_{j=1}^{I+1} \int_0^T \int_\Omega 
		\psi \partial_j \xi (v) w_j \, \dd x \dd t =0
	\end{align*}
 for all $\psi \in C^\infty_c([0,T)\times\Omega)$.
\end{lemma}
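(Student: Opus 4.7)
The plan is to extend the unknowns to negative times so that the hypothesis becomes a plain distributional identity on an open cylinder, regularise in $t$ by one–sided mollification, apply the classical chain rule to the smooth-in-time approximation, and pass to the limit. Define $\tilde v_j := v_j$ on $[0,T)\times\Omega$ and $\tilde v_j := (v_0)_j$ on $(-1,0)\times\Omega$, and analogously $\tilde w_j := w_j$ on $[0,T)\times\Omega$, $\tilde w_j := 0$ on $(-1,0)\times\Omega$. Given $\phi\in C_c^\infty((-1,T)\times\Omega)$, splitting the integral at $t=0$, applying the hypothesis with $\psi := \phi|_{[0,T)}$, and computing $-\int_{-1}^0\!\int_\Omega (v_0)_j\,\partial_t\phi\,\dd x\dd t = -\int_\Omega (v_0)_j\,\phi(0,\cdot)\,\dd x$ by the fundamental theorem of calculus, one finds
\[
-\int_{-1}^T\!\!\int_\Omega \tilde v_j\,\partial_t\phi\,\dd x\dd t \;=\; \int_{-1}^T\!\!\int_\Omega \phi\,\tilde w_j\,\dd x\dd t.
\]

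Next, let $\rho_\eta$ be a nonnegative smooth mollifier with $\operatorname{supp}\rho_\eta\subset[0,\eta]$, and define $v_j^\eta := \rho_\eta\ast_t \tilde v_j$, $w_j^\eta := \rho_\eta\ast_t \tilde w_j$ on $(-1+\eta,T)\times\Omega$. Then $v_j^\eta$ is smooth in $t$ for a.e.\ $x$, satisfies $\partial_t v_j^\eta = w_j^\eta$ a.e.\ (by the previous identity), and — crucially — the asymmetric mollifier yields $v_j^\eta(0,x)=(v_0)_j(x)$ exactly, because $\tilde v_j\equiv(v_0)_j$ on $[-\eta,0]$. Fix $\psi\in C_c^\infty([0,T)\times\Omega)$ with temporal support in $[0,T']$ for some $T'<T$, and take $\eta<T-T'$. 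Applying the classical one-variable chain rule pointwise in $x\in\Omega$ to $t\mapsto\xi(v^\eta(t,x))$ and integrating by parts in time gives
\[
-\int_0^T\!\!\int_\Omega \xi(v^\eta)\,\partial_t\psi\,\dd x\dd t - \int_\Omega \xi(v_0)\,\psi(0,\cdot)\,\dd x = \sum_{j=1}^{I+1}\int_0^T\!\!\int_\Omega \psi\,\partial_j\xi(v^\eta)\,w_j^\eta\,\dd x\dd t,
\]
with no boundary term at $t=T$ since $\psi(T,\cdot)=0$ and none at $t=0$ beyond the $\xi(v_0)$ contribution just obtained.

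It remains to let $\eta\to 0$. Since $D\xi$ has compact support, $\xi$ and $D\xi$ are globally bounded. Standard mollifier properties give $w_j^\eta\to\tilde w_j$ in $L^1_{\mathrm{loc}}((-1,T){\times}\Omega)$ and $v_j^\eta\to\tilde v_j$ in $L^1_{\mathrm{loc}}$, hence a.e.\ along a subsequence; dominated convergence then yields $\xi(v^\eta)\to\xi(\tilde v)$ and $\partial_j\xi(v^\eta)\to\partial_j\xi(\tilde v)$ in $L^p_{\mathrm{loc}}$ for every finite $p$, and combining boundedness and a.e.\ convergence of $\partial_j\xi(v^\eta)$ with $L^1$ convergence of $w_j^\eta$ gives $\partial_j\xi(v^\eta)\,w_j^\eta\to\partial_j\xi(\tilde v)\,\tilde w_j$ in $L^1_{\mathrm{loc}}$. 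Taking the limit in the previous display and using $\tilde v=v$, $\tilde w=w$ on $(0,T)\times\Omega$ produces the claimed identity. No step is truly hard here; the only delicate point is the clean recovery of the initial trace term, which the one-sided choice $\operatorname{supp}\rho_\eta\subset[0,\eta]$ handles by enforcing $v^\eta(0,\cdot)=v_0$ on the nose — a point that is slightly more elaborate in the full Lemma~\ref{lemmachainrulefischer} because of the measure-valued source $\nu_j$, which is absent here.
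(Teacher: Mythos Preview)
Your argument is correct. The paper does not supply its own proof of this lemma; it simply calls it a ``standard chain-rule lemma'' and cites \cite[p.~583, Lemma~5]{Fischer_2015_renormalized}. Your approach---extending $v$ and $w$ by the initial datum to negative times so that $\partial_t\tilde v_j=\tilde w_j$ holds distributionally on an open cylinder, mollifying in time with a one-sided kernel so that $v^\eta(0,\cdot)=v_0$ exactly, applying the classical chain rule for a.e.\ $x$, and passing to the limit by dominated convergence using the boundedness of $\xi$ and $D\xi$---is a clean and standard way to prove such statements, and all steps go through as you describe. The only cosmetic point is that the subsequence extraction for a.e.\ convergence is not strictly needed (convergence in measure of $v^\eta$ plus uniform continuity and boundedness of $\xi,D\xi$ already yield the required $L^1$ convergences along the full family), but this does not affect correctness.
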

	
	Observe that in contrast to Lemma~\ref{lemmachainrulefischer} the test functions $\psi=\psi(t,\cdot)$ in Lemma~\ref{lemmachainlemmafischer} are compactly supported in $\Om$. This will be needed in the mollification-based regularisation procedure employed for proving Proposition~\ref{propchainrule}.
Before turning to the proof of this proposition, let us introduce the specific mollifier $\rho_\eta$ we are going to use.	
	\begin{definition}
		\label{defmollofier}
		Denote by $\wt \rho_\eta \in C_c^\infty(\mathbb R^d)$ the standard mollifier, i.e.
		\begin{align}
		\label{eqdefmollifier}
		\wt \rho_\eta (x) \coleq C \eta^{-d} \exp \bigg( \frac{1}{\eta^{-2} |x|^2 - 1} \bigg) \ \text{ for }|x| < \eta 
		\end{align}
		and $\wt \rho_\eta (x) \coleq 0$ for $|x| \geq \eta$, which satisfies $\int_{\mathbb R^d} \wt \rho_\eta(x) \, \dd x = 1$, and set $\rho_\eta \coleq \wt \rho_\eta \ast \wt \rho_\eta$. Further define 
		\[
		\Omega^\vartheta \coleq \{ x \in \Omega \ | \ \dist(x, \partial \Omega) > \vartheta\} \ \text{ for }\vartheta > 0.
		\]
	\end{definition}
	We will make use of the following basic properties satisfied by $\rho_\eta$ and $\wt \rho_\eta$.
	\begin{lemma}[\cite{ChenJuengel_2019_renormalised}, Lemma 10]
		\label{lemmamollifier}
		Let $\rho_\eta$ for $\eta > 0$ be the special mollifier from Definition \ref{defmollofier}, let $u$, $v$ be locally integrable functions on $\Omega$ where $\supp u \subset \Omega^{4\eta}$, and let $x \in \Omega^{3\eta}$ and $\wt x \in B(x,\eta)$. Then,
		\begin{align}
		& \int_\Omega u \, \rho_\eta \ast v \, \dd x = \int_{\Omega^{3\eta}} \big( \wt \rho_\eta \ast u \big) \big( \wt \rho_\eta \ast v \big) \, \dd x, \label{eqmollifierdistribute} \\
		& \int_{B(x,3\eta)} \rho_\eta(\wt x - y) u(y) \, \dd y = \big( \rho_\eta \ast u \big) (\wt x), \qquad \int_{B(x,3\eta)} \rho_\eta (\wt x - y) \, \dd y = 1.
		\end{align}
	\end{lemma}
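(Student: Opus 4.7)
Both identities reduce to routine applications of Fubini combined with support bookkeeping. The two crucial inputs are the radial symmetry of $\wt\rho_\eta$ (so that $\wt\rho_\eta(x{-}z)=\wt\rho_\eta(z{-}x)$) and the consequent inclusion $\supp\rho_\eta\subset \ol B(0,2\eta)$, which follows from $\rho_\eta=\wt\rho_\eta\ast\wt\rho_\eta$ together with $\supp\wt\rho_\eta\subset\ol B(0,\eta)$.

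To establish the integral identity~\eqref{eqmollifierdistribute}, I would expand
\[
(\rho_\eta\ast v)(x)=\bigl(\wt\rho_\eta\ast(\wt\rho_\eta\ast v)\bigr)(x)=\iint \wt\rho_\eta(x{-}z)\,\wt\rho_\eta(z{-}y)\,v(y)\,\dd y\,\dd z,
\]
multiply by $u(x)$, and integrate over $\Omega$. The hypothesis $\supp u\subset\Omega^{4\eta}$ confines the $x$-integration to a bounded set, and since $\wt\rho_\eta$ is smooth with compact support and $u$, $v$ are locally integrable, Fubini applies. Reordering and invoking the radial symmetry of $\wt\rho_\eta$ brings the resulting triple integral into the form $\int_{\mathbb R^d}(\wt\rho_\eta\ast u)(z)\,(\wt\rho_\eta\ast v)(z)\,\dd z$. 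The final step is to restrict the $z$-domain: from $\supp(\wt\rho_\eta\ast u)\subset\supp u+\ol B(0,\eta)\subset\Omega^{3\eta}$ (triangle inequality, using $4\eta{-}\eta=3\eta$) the integrand vanishes outside $\Omega^{3\eta}$, which yields~\eqref{eqmollifierdistribute}.

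For the two pointwise identities, I would observe that for $\wt x\in B(x,\eta)$ with $x\in\Omega^{3\eta}$, the triangle inequality gives $B(\wt x,2\eta)\subset B(x,3\eta)\subset\Omega$, and the support of $y\mapsto\rho_\eta(\wt x{-}y)$ is contained in $B(\wt x,2\eta)$. Therefore integration over $B(x,3\eta)$ coincides with integration over all of $\mathbb R^d$: the first integral reduces to $(\rho_\eta\ast u)(\wt x)$ by the definition of convolution, and the second evaluates to $\int_{\mathbb R^d}\rho_\eta\,\dd y=1$, the latter following from $\int\wt\rho_\eta=1$ via Fubini applied to the convolution $\rho_\eta=\wt\rho_\eta\ast\wt\rho_\eta$. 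There is no genuine analytic difficulty in the statement; the only point requiring attention is the nested-neighbourhood arithmetic, which also explains the choice of the four separations $\eta,2\eta,3\eta,4\eta$ built into Definition~\ref{defmollofier}.
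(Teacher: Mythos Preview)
Your argument is correct and is exactly the standard proof: Fubini plus the symmetry $\wt\rho_\eta(-\cdot)=\wt\rho_\eta(\cdot)$ for \eqref{eqmollifierdistribute}, and the support inclusion $\supp\rho_\eta\subset\ol B(0,2\eta)$ together with $B(\wt x,2\eta)\subset B(x,3\eta)\subset\Omega$ for the two pointwise identities. The paper does not give its own proof but simply cites \cite[Lemma~10]{ChenJuengel_2019_renormalised}, where the same computation is carried out; your write-up matches that reference.
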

 We are now in a position to prove Proposition~\ref{propchainrule}.
	\begin{proof}[Proof of Proposition~\ref{propchainrule}]
		As in \cite{ChenJuengel_2019_renormalised} we first observe that
		for any $j\in\{1,\dots,I+1\}$ the expression $\int_0^T\langle\partial_tZ_j,\psi\rangle\,\dd t$ in~\eqref{eq:00.ii} (with $\phi_j=\psi$ vanishing at $t=T$) can be rewritten in the distributional form $- \int_0^T \int_\Omega Z_j \frac{\dd}{\dd t}\psi \,\dd x\dd t - \int_\Omega Z_{0,j} \psi(0, \cdot) \, \dd x $.
		Let us further recall the bound~\eqref{equnifbounds}, which implies among others the regularity $u \in L^2(0,T; H^1(\Omega))$ and $c_j \in L^2(0,T; H^1(\Omega))$ for all $j\in\{1,\dots,I\}$.
		We now specify for $j \in \{1,\dotsc,I\}$ in Lemma \ref{lemmachainrulefischer} the variables $v_j \coleq c_j \in L^2(0,T; H^1(\Omega))$,
		$v_{I+1} \coleq u \in L^2(0,T; H^1(\Omega))$, $(v_0)_j \coleq c_{0,j} \in L^1(\Omega)$, $(v_0)_{I+1} \coleq u_{0} \in L^1(\Omega)$, $\xi \coleq \varphi_i^E$, $\nu_j \coleq 0$, $w_j \coleq R_{\rreg,j}(Z) \in L^1(0,T; L^1(\Omega))$, $w_{I+1} \coleq 0$,
		\[
		z_j \coleq -\sum_{l=1}^{I+1}A^\ve_{jl}(Z)\nabla Z_l 
		\in L^s(0,T; L^s(\Omega, \mathbb R^d)),
		\]
		and $z_{I+1} \coleq -A^\ve_{I+1,I+1}(Z) \nabla Z_{I+1} \in L^s(0,T; L^s(\Omega, \mathbb R^d))$. This choice ensures that \eqref{eqchainrulefischerassump} holds true as a consequence of \eqref{eq:00.ii}. In the case of the better regularity $z \in L^2(0,T; L^2(\Omega, \mathbb R^d))^{I+1}$, we directly arrive at \eqref{eqchainrule} by applying Lemma \ref{lemmachainrulefischer}.
		
		In the general case, we proceed as in \cite{ChenJuengel_2019_renormalised} and 
		\cite{Fischer_2015_renormalized}. In our situation, this amounts to choosing a partition of unity on $\ol \Omega$, where we first treat the case of a smooth $\psi$ being compactly supported in $[0,T) \times \Omega^{4\eta}$. We now take $\rho_\eta \ast \psi$ as a test function in \eqref{eqchainrulefischerassump} and integrate by parts the last term on the right-hand side. For $j \in \{1,\dotsc,I\}$, this results in 
		\vspace{-1ex}
		\begin{multline*}
		-\int_0^T \int_\Omega (\rho_\eta \ast c_j) \frac{d}{dt} \psi \, \dd x \dd
		t  - \int_\Omega (\rho_\eta \ast c_{0,j}) \psi(0, \cdot) \, \dd x \\
		= \int_0^T \int_\Omega \psi
		\, (\rho_\eta \ast R_{\rreg, j}(Z)) \, \dd x \dd t + \int_0^T \int_\Omega \psi \, \divv \bigg( \rho_\eta \ast \sum_{l=1}^{I+1} A_{jl}^\varepsilon(Z) \nabla Z_l \bigg) \dd x
		\dd t,
		\end{multline*}
		while for $j = I+1$, we obtain 
		\vspace{-1ex}
		\begin{multline*}
		-\int_0^T \int_\Omega (\rho_\eta \ast u) \frac{d}{dt} \psi \, \dd x \dd
		t  - \int_\Omega (\rho_\eta \ast u_0) \psi(0, \cdot) \, \dd x \\
		= \int_0^T \int_\Omega \psi \, \divv \big(\rho_\eta \ast (A_{I+1,I+1}^\varepsilon(Z) \nabla Z_{I+1}) \big) \, \dd x
		\dd t.
		\end{multline*}
		This enables us to apply Lemma \ref{lemmachainlemmafischer} with the choices $\xi \coleq \varphi_i^E$, 
		\begin{align*}
		v_j     & \coleq \rho_\eta \ast c_j,                                  \qquad                                          
		v_{I+1}  \coleq \rho_\eta \ast u,                                                                           \\
		w_j     & \coleq \rho_\eta \ast R_{\rreg,j}(Z)
		+ \divv \bigg( \rho_\eta \ast \sum_{l=1}^{I+1} A^\ve_{jl}(Z) \nabla Z_l \bigg), \\
		w_{I+1} & \coleq \divv \big( \rho_\eta \ast ( A^\ve_{I+1,I+1}(Z) \nabla Z_{I+1} ) \big).
		\end{align*}
		Using the notation $Z = (c,u)$, integrating by parts, and keeping in mind that $A_{I+1,l} = 0$ for $l < I+1$, we arrive at 
		\begin{align} \label{eqchainrulemollified}
		\begin{aligned}
		-\int_0^T \int_\Omega & \varphi_i^E(\rho_\eta \ast Z) \frac{d}{dt} \psi \, \dd x \dd t 
		- \int_\Omega \varphi_i^E(\rho_\eta \ast Z^0) \psi(0, \cdot) \, \dd x \\
		=\ & - \sum_{j,k,l=1}^{I+1} \int_0^T \int_\Omega \psi \partial_j \partial_k \varphi_i^E (\rho_\eta \ast Z) \rho_\eta \ast (A^\ve_{jl}(Z) \nabla Z_l) \cdot \nabla (\rho_\eta \ast Z_k) \, \dd x \dd t \\ 
		& - \sum_{j,l=1}^{I+1} \int_0^T \int_\Omega \partial_j \varphi_i^E (\rho_\eta \ast Z) \rho_\eta \ast (A^\ve_{jl}(Z) \nabla Z_l) \cdot \nabla \psi \, \dd x \dd t \\ 
		& + \sum_{j=1}^{I} \int_0^T \int_\Omega \psi \partial_j \varphi_i^E (\rho_\eta \ast Z) \rho_\eta \ast R_{\rreg,j}(Z) \, \dd x \dd t.
		\end{aligned}
		\end{align}
		Passing to the limit $\eta \rightarrow 0$ in \eqref{eqchainrulemollified} is elementary for the left-hand side and the last two lines on the right-hand side. The convergence of the first term on the right-hand side is proven in Lemma \ref{lemmachainrulelimit} below, which settles \eqref{eqchainrule} for the case of test functions $\psi$ being compactly supported in $[0,T) \times \Omega$. The corresponding situation of test functions $\psi$ being compactly supported in $[0,T) \times U$ where $U$ is a coordinate patch of $\Omega$ at the boundary $\partial \Omega$ is treated as in \cite[p.~580--583]{Fischer_2015_renormalized} leading to analogous expressions.
	\end{proof}
	
	\begin{lemma}
		\label{lemmachainrulelimit}
		Let $Z=Z^{\ve,\rreg}$ denote the approximate solutions as introduced in the beginning of Section~\ref{sec:prelim2}, which satisfy in particular the weak formulation~\eqref{eq:00.ii}
		and obey the bound~\eqref{equnifbounds}.
		Let further $\psi\in C^\infty_c([0,T) \times \Omega^{4\eta})$,  with $\Omega^{4\eta}$ and $\rho_\eta = \wt \rho_\eta \ast \wt \rho_\eta$ as in Definition~\ref{defmollofier}. Then, we have 
		\begin{multline}
		\lim_{\eta\to0}\sum_{j,k,l=1}^{I+1}\int_0^T\int_\Omega \psi \, \partial_j \partial_k \varphi_i^E(\rho_\eta \ast Z) \, \rho_\eta \ast \big( A^\ve_{jl}(Z) \nabla Z_l \big) \cdot \nabla ( \rho_\eta \ast Z_k ) \, \dd x\dd t
		\\ =\sum_{j,k,l=1}^{I+1}\int_0^T\int_\Omega \psi \, \partial_j \partial_k \varphi_i^E (Z)A^\ve_{jl}(Z)\nabla Z_l\cdot\nabla Z_k\,\dd x\dd t.
		\end{multline}
	\end{lemma}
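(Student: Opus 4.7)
The plan rests on rewriting the triple sum via the chain-rule identity $\sum_k \partial_j\partial_k \varphi_i^E(Z)\,\nabla Z_k = \nabla_x\bigl[\partial_j\varphi_i^E(Z)\bigr]$ (and its counterpart with $Z$ replaced by $\rho_\eta*Z$), which is valid since both $Z$ and $\rho_\eta*Z$ lie in $L^2_t H^1_x$. Setting $F_j := \sum_l A^\ve_{jl}(Z)\nabla Z_l \in L^s(\Om_T)$, with $s = \tfrac{2d+2}{2d+1}$, the claim reduces to
\[
\sum_j \int_0^T\!\!\int_\Om \psi\,(\rho_\eta*F_j)\cdot\nabla\bigl[\partial_j\varphi_i^E(\rho_\eta*Z)\bigr]\,\dd x\dd t \;\longrightarrow\; \sum_j \int_0^T\!\!\int_\Om \psi\, F_j\cdot\nabla\bigl[\partial_j\varphi_i^E(Z)\bigr]\,\dd x\dd t.
\]
The benefit of this reformulation is that $\partial_j\varphi_i^E$ is uniformly bounded with compact support in $\{\sum_m Z_m \leq 2E\}$ by properties \ref{it:C3} and \ref{it:C5}, so $\nabla[\partial_j\varphi_i^E(\rho_\eta*Z)] = D\partial_j\varphi_i^E(\rho_\eta*Z)\cdot \rho_\eta*\nabla Z$ is a product of a uniformly bounded a.e.-convergent sequence and the $L^2$-convergent sequence $\rho_\eta*\nabla Z$; a Vitali-type argument then yields convergence of this product to $\nabla[\partial_j\varphi_i^E(Z)]$ in $L^2(\Om_T)$.

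Next, to couple this $L^2$-convergence with the weaker $L^s$-convergence $\rho_\eta*F_j \to F_j$, I would decompose the flux by a cutoff in $Z$. Fix $R > 2E$ and pick a smooth $\zeta_R \in C^\infty(\mathbb R_{\geq 0};[0,1])$ equal to $1$ on $[0,R]$ and $0$ on $[R{+}1,\infty)$, and write $F_j = F^R_j + N^R_j$ with $F^R_j := \zeta_R(\sum_m Z_m)F_j$. By Lemma~\ref{l:efl}, $F^R_j \in L^2(\Om_T)$ (with norm depending on $R$), while $N^R_j \in L^s(\Om_T)$ is supported on $\{\sum_m Z_m > R\}$, and $\|N^R_j\|_{L^s} \to 0$ as $R \to \infty$ by dominated convergence. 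Since the support of $\partial_j\varphi_i^E(Z)$ lies inside $\{\sum_m Z_m \leq 2E\} \subset \{F^R_j = F_j\}$, the target integral is unchanged upon replacing $F_j$ by $F^R_j$. For the $F^R_j$-contribution, the strong convergence $\rho_\eta*F^R_j \to F^R_j$ in $L^2$, combined with the $L^2$-convergence of $\nabla[\partial_j\varphi_i^E(\rho_\eta*Z)]$ established above and the boundedness of $\psi$, gives convergence to the target via a direct $L^2 \times L^2$ pairing.

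The main obstacle is to show that the remainder $J_\eta^R := \int_0^T\!\!\int_\Om \psi\,(\rho_\eta*N^R_j)\cdot \nabla[\partial_j\varphi_i^E(\rho_\eta*Z)]\,\dd x\dd t$ is uniformly small in $\eta$ as $R\to\infty$. The integrand converges to zero a.e.\ by Lebesgue differentiation, since $\rho_\eta*Z \to Z$ a.e.\ places almost every point of $\supp\partial_j\varphi_i^E(\rho_\eta*Z)$ eventually into $\{\sum_m Z_m \leq 2E\} \subset \{N^R_j = 0\}$; the subtlety is equi-integrability, because the product $L^s\cdot L^2$ does not embed into $L^1$ when $s<2$. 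To handle this, I would split $\Om_T$ into the ``good'' region $\{\sum_m Z_m \leq R{-}1\}$ and its complement: on the former, $\chi\cdot\rho_\eta*N^R_j \to 0$ strongly in $L^s$ as $\eta\to 0$ (since $N^R_j$ vanishes there and $\rho_\eta*N^R_j \to N^R_j$ in $L^s$); on the latter, which has measure $\lesssim R^{-q_1}$ with $q_1 = 2{+}2/d$ by Chebyshev's inequality and the Gagliardo--Nirenberg-type estimate \eqref{eq:214}, one exploits the a.e.\ vanishing of $\partial_j\partial_k\varphi_i^E(\rho_\eta*Z)$ on $\{\sum_m Z_m > 2E\}$ together with a Vitali-based convergence argument, using Lemma~\ref{lemmamollifier} to redistribute one mollifier onto the $L^2$-factor so that the remaining pairing closes. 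Combining these estimates yields $J_\eta^R \to 0$ uniformly in $\eta$ as $R \to \infty$, completing the proof.
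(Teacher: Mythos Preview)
Your reduction via the chain rule to the pairing
\[
\sum_j \int_0^T\!\!\int_\Om \psi\,(\rho_\eta*F_j)\cdot\nabla\bigl[\partial_j\varphi_i^E(\rho_\eta*Z)\bigr]\,\dd x\dd t
\]
is correct, and your treatment of the main part $F_j^R$ via $L^2\times L^2$ pairing is fine. The problem is the remainder $J_\eta^R$: the arguments you sketch do not close.

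On the ``good'' region $\{\sum_m Z_m\le R-1\}$ you only obtain $\chi\cdot\rho_\eta*N_j^R\to 0$ in $L^s$, but the other factor $G_\eta:=\nabla[\partial_j\varphi_i^E(\rho_\eta*Z)]$ is merely bounded in $L^2$, not in $L^{s'}=L^{2d+2}$, so the $L^s$ convergence gives you nothing about the integral. Redistributing one mollifier via Lemma~\ref{lemmamollifier} does not change this: you still end up with $\wt\rho_\eta*N_j^R$ bounded only in $L^s$ against an $L^2$ object. On the ``bad'' region, small measure alone cannot control the integral of a product that is not known to be $\eta$-uniformly equi-integrable; you would need an $\eta$-uniform $L^1$ bound on the integrand, which is precisely what you have not established.

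This is the heart of the lemma, and the paper's proof handles it by a genuinely different mechanism. After redistributing $\rho_\eta=\wt\rho_\eta*\wt\rho_\eta$ to write the integral as $\int J_\eta^j\cdot K_\eta^j$, the paper derives a \emph{pointwise} bound on $K_\eta^j(t,x)$ that contains the term $\inf_{\wt x\in B(x,\eta)}|(\rho_\eta*(1{+}Z))(\wt x)|$. The point is that on the support of $J_\eta^j$ one has $\sum_m(\rho_\eta*Z_m)\le 2E$ somewhere in $B(x,\eta)$, so this infimum is $\lesssim E$. Combined with a matching pointwise bound on $J_\eta^j$, this yields
\[
|J_\eta^j(t,x)\cdot K_\eta^j(t,x)|\lesssim E\,\eta^{-d}\!\int_{B(x,3\eta)}\big(|\nabla u|^2+P(Z)\big)\,\dd y,
\]
whose $x$-integral is $\eta$-uniformly bounded by $CE\int(|\nabla u|^2+P(Z))$. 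This is the $\eta$-uniform $L^1$ domination that makes the Vitali argument work. Your cutoff-in-$Z$ decomposition does not see this cancellation, because the support constraint involves $\rho_\eta*Z$ rather than $Z$, and there is no reason for $\rho_\eta*N_j^R$ to be small where $\rho_\eta*Z$ is small.
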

	\begin{proof}
		We start by rewriting the first line on the right-hand side of \eqref{eqchainrulemollified} using \eqref{eqmollifierdistribute} as
		\begin{align}
		& \sum_{j=1}^{I+1} \int_0^T\!\!\int_\Omega \psi \, \nabla \big( \partial_j \varphi_i^E(\rho_\eta \ast Z) \big) \cdot \Big( \rho_\eta \ast \sum_{l=1}^{I+1} A^\ve_{jl}(Z) \nabla Z_l \Big) \, \dd x\dd t                                                           \\
		& = \sum_{j=1}^{I+1} \int_0^T \int_{\Omega^{3\eta}} J^j_\eta (t,x) \cdot K^j_\eta (t,x) \, \dd x\dd t
		\end{align}
		where
		\begin{align}
		J_\eta^j(t,x)     & \coleq \wt \rho_\eta \ast \big( \psi \, \nabla \big( \partial_j \varphi_i^E(\rho_\eta \ast Z) \big) \big), \qquad
		K_\eta^j(t,x)     \coleq \wt \rho_\eta \ast \sum_{l=1}^{I+1} A^\ve_{jl}(Z) \nabla Z_l.
		\end{align}
		The limit $\eta \rightarrow 0$ is now performed by following the steps in \cite{ChenJuengel_2019_renormalised}.
		\paragraph{Step 1: Bound on $K_\eta^j$} We first calculate for $(t,x) \in [0,T) \times \Omega^{3\eta}$ by using Lemma \ref{l:efl}, the hypothesis \ref{it:H6}, and the Cauchy--Schwarz inequality
		\begin{align}
		|K_\eta^j(t,x)| & = \bigg| \int_{B(x,\eta)} \wt \rho_\eta(x-y) \Big( \sum_{l=1}^{I+1} A^\ve_{jl}(Z) \nabla Z_l \Big)(y) \, \dd y \bigg|                                                                                                                                       \\
		& \mqquad \lesssim \sum_{i=1}^I \int_{B(x,\eta)} \wt \rho_\eta(x-\cdot) \big( 1 + u + c_i \big) P^{\frac12}(Z) \, \dd y \\
		& \mqquad \leq C \eta^{-d} \bigg[ \bigg( \int_{B(x,\eta)} (1+u)^2 \, \dd y \bigg)^\frac12 + \sum_{i=1}^I \bigg( \int_{B(x,\eta)} c_i^2 \, \dd y \bigg)^\frac12 \bigg]                                                                                                                                           \\
		& \mquad\times \bigg( \int_{B(x,\eta)} P(Z) \, \dd y \bigg)^\frac12.
		\end{align}
		By reusing the arguments on page 5921 in \cite{ChenJuengel_2019_renormalised}, we obtain
		\begin{align}
		\eta^{-\frac{d}{2}} \bigg( \int_{B(x,\eta)} (1+u)^2 \, \dd y \bigg)^\frac12 & \leq C \eta^{1-\frac{d}{2}} \bigg( \int_{B(x,3\eta)} |\nabla u|^2 \, \dd y \bigg)^\frac12 \\
		& \quad + C \inf_{\wt x \in B(x,\eta)} |(\rho_\eta \ast (1+u))(\wt x)|,
		\end{align}
		and an analogous relation involving $c_k$. In particular,
		\begin{align}
		|K_\eta^j(t,x)| & \leq C \eta^{1-d} \int_{B(x,3\eta)} \big( |\nabla u|^2 + P(Z) \big) \, \dd y                                                                                            \\
		& \quad + C \eta^{-\frac{d}{2}} \Big( \int_{B(x,3\eta)} \big( |\nabla u|^2 + P(Z) \big) \, \dd y \Big)^\frac12 \inf_{\wt x \in B(x,\eta)} |(\rho_\eta \ast (1+Z))(\wt x)|
		\end{align}
		
		\paragraph{Step 2: Bound on $J_\eta^j$} The estimate on $J_\eta^j$ can be inferred in the same way as in \cite{ChenJuengel_2019_renormalised}; in particular, the bound on page 5923 therein entails:
		\[
		|J_\eta^j(t,x)| \leq C \min \bigg\{ \eta^{-1}, \eta^{-\frac{d}{2}} \Big( \int_{B(x,3\eta)} \Big( |\nabla u|^2 + P(Z) \Big) \, \dd y \Big)^\frac12 \bigg\}.
		\]
		
		\paragraph{Step 3: Bound on $J_\eta^j \cdot K_\eta^j$} For $(t,x) \in [0,T) \times \Omega^{3\eta}$, we first consider the case that $\sum_{k=1}^{I+1} (\rho_\eta \ast Z_k)(\wt x) \geq 2E$ holds true for all $\wt x \in B(x,\eta)$. Then, $\partial_j \varphi_i^E(\rho_\eta \ast Z) = 0$ on $B(x,\eta)$ and, hence, $J_\eta^j(t,x) = 0 = J_\eta^j(t,x)K_\eta^j(t,x)$. In the opposite case, there exists some $x^\ast \in B(x,\eta)$ with $\sum_{k=1}^{I+1} (\rho_\eta \ast Z_k)(x^\ast) < 2E$. As a result, we deduce
		\[
		\inf_{\wt x \in B(x,\eta)} |(\rho_\eta \ast Z)(\wt x)| \leq |(\rho_\eta \ast Z)(x^\ast)|
		< 2E.
		\]
		The estimates on $J_\eta^j(t,x)$ and $K_\eta^j(t,x)$ 
		now lead to
		\begin{align}
		\label{eqboundjetaketa}
		|J_\eta^j(t,x) \cdot K_\eta^j(t,x)| \leq C E \eta^{-d} \int_{B(x,3\eta)} \Big( |\nabla u|^2 + P(Z) \Big) \, \dd y.
		\end{align}
		
		\paragraph{Step 4: Bound on $J_0^j \cdot K_0^j$} The pointwise limits of $J_\eta^j$ and $K_\eta^j$ read as follows:
		\begin{align}
		J_0^j(t,x)     & \coleq \psi \, \nabla \big( \partial_j \varphi_i^E(Z) \big), \\
		K_0^j(t,x)     & \coleq \sum_{l=1}^{I+1} A^\ve_{jl}(Z) \nabla Z_l.
		\end{align}
		We infer from Lemma \ref{l:efl} that for all $(t,x) \in [0,T) \times \Omega^{3\eta}$,
		\begin{align*}
		  |J_0^j(t,x) \cdot K_0^j(t,x)|                                                                                                                            
		&  \leq C\bigg|\psi \sum_{k=1}^{I+1} \partial_j \partial_k \varphi_i^E(Z) \nabla Z_k \bigg| \sum_{i=1}^I (1 + u + c_i) P^\frac12(Z) \\
		 & \leq C(E) \Big( |\nabla u|^2 + P(Z) \Big),
		\end{align*}
		where the $E$-dependence of the constant $C(E)$ arises from estimating $1 + u + c_i \leq C(E)$ (note that $\partial_j \partial_k \varphi_i^E(Z)$ is nonzero only for $|Z| \lesssim E$). 
		
		\paragraph{Step 5: Limit $\eta \rightarrow 0$} We are now able to prove that
		\begin{align}
		\int_0^T \int_{\Omega^{3\eta}} \big( J_\eta^j \cdot K_\eta^j - J_0^j \cdot K_0^j  \big) (t,x) \, \dd x\dd t \rightarrow 0
		\end{align}
		for all $j \in \{1,\dotsc,I+1\}$ as $\eta \rightarrow 0$. Following \cite{ChenJuengel_2019_renormalised}, we decompose the domain of integration into the sets $\{|J_\eta^j \cdot K_\eta^j - J_0^j \cdot K_0^j| > \vartheta\}$ and $\{|J_\eta^j \cdot K_\eta^j - J_0^j \cdot K_0^j| \leq \vartheta\}$ for arbitrary $\vartheta > 0$, 
		and define
		\begin{align}
		g_\eta(t,y) \coleq \eta^{-d} \int_\Omega \chi_{\Omega^{3\eta}}(x) \chi_{B(x,3\eta)}(y) \chi_{\{|J_\eta^j \cdot K_\eta^j - J_0^j \cdot K_0^j| > \vartheta\}} (t,x) \, \dd x.
		\end{align}
		Using the fact that $\chi_{B(x,3\eta)}(y)=0$ whenever $|x-y|\ge3\eta$,
		it is easy to see that $g_\eta$ is $\eta$-uniformly bounded in $L^\infty((0,T)\times\Om)$.	
		We now infer from \eqref{eqboundjetaketa} the bound
		\begin{multline}
		\int_0^T \int_{\Omega^{3\eta}} | J_\eta^j(t,x) \cdot K_\eta^j(t,x) | \chi_{\{|J_\eta^j \cdot K_\eta^j - J_0^j \cdot K_0^j| > \vartheta\}}(t,x) \, \dd x\dd t \\
		\leq C E \int_0^T \int_{\Omega} g_\eta \big( |\nabla u|^2 + P(Z) \big) \, \dd y \dd t.
		\end{multline}
		Next, we calculate
		\[
		\int_0^T \int_\Omega |g_\eta(t,y)| \, \dd y \dd t \leq C \mathcal L^{d+1} \big( \{|J_\eta^j \cdot K_\eta^j - J_0^j \cdot K_0^j| > \vartheta\} \big).
		\]
		As the right-hand side tends to zero for $\eta \rightarrow 0$ (since $J_\eta^j \cdot K_\eta^j \rightarrow J_0^j \cdot K_0^j$ a.e.\ in $(0,T) \times \Omega$), we know that $g_\eta \rightarrow 0$ in $L^1((0,T) \times \Omega)$ and, hence, (up to a subsequence) $g_\eta \rightarrow 0$ a.e.\ in $(0,T) \times \Omega$. 
		Recalling the $\eta$-uniform boundedness of $g_\eta$ as well as the uniform bounds~\eqref{equnifbounds}, we further find
		\[
		g_\eta \big( |\nabla u|^2 + P(Z) \big) \leq C \big( |\nabla u|^2 + P(Z) \big) \in L^1((0,T) \times \Omega).
		\]
		Lebesgue's dominated convergence theorem now guarantees that for $\eta \rightarrow 0$,
		\begin{multline*}
		\int_0^T \int_{\Omega^{3\eta}} | J_\eta^j(t,x) \cdot K_\eta^j(t,x) | \chi_{\{|J_\eta^j \cdot K_\eta^j - J_0^j \cdot K_0^j| > \vartheta\}}(t,x) \, \dd x\dd t \\
		\leq C E \int_0^T \int_{\Omega} g_\eta \big( |\nabla u|^2 + P(Z) \big) \, \dd y \dd t \rightarrow 0.
		\end{multline*}
		
		We now apply Lebesgue's dominated convergence theorem to the parallel situation involving $J_0^j \cdot K_0^j$ for any $j \in \{1, \dotsc, I+1\}$. To this end, we use the convergence $|(J_0^j \cdot K_0^j)| \chi_{\{|J_\eta^j \cdot K_\eta^j - J_0^j \cdot K_0^j| > \vartheta\}} \rightarrow 0$ a.e.\ in $(0,T) \times \Omega$ for $\eta \rightarrow 0$ and the uniform bound $|(J_0^j \cdot K_0^j)| \chi_{\{|J_\eta^j \cdot K_\eta^j - J_0^j \cdot K_0^j| > \vartheta\}} \leq C(E) \big( |\nabla u|^2 + P(Z) \big) \in L^1((0,T) \times \Omega)$. This yields
		\[
		\int_0^T \int_{\Omega^{3\eta}} | J_0^j(t,x) \cdot K_0^j(t,x) | \chi_{\{|J_\eta^j \cdot K_\eta^j - J_0^j \cdot K_0^j| > \vartheta\}}(t,x) \, \dd x\dd t \rightarrow 0
		\]
		for $\eta \rightarrow 0$. Finally, we conclude that
		\begin{align}
		& \bigg| \int_0^T \int_{\Omega^{3\eta}} \big( J_\eta^j \cdot K_\eta^j - J_0^j \cdot K_0^j  \big) (t,x) \, \dd x\dd t \bigg|                                                                      \\
		& \quad \leq \int_0^T \int_{\Omega^{3\eta}} | (J_\eta^j \cdot K_\eta^j - J_0^j \cdot K_0^j)(t,x) | \chi_{\{|J_\eta^j \cdot K_\eta^j - J_0^j \cdot K_0^j| > \vartheta\}}(t,x) \, \dd x\dd t + C \vartheta \\
		& \quad \rightarrow C\vartheta
		\end{align}
		for $\eta \rightarrow 0$. This completes the proof since $\vartheta > 0$ can be chosen arbitrarily small.
	\end{proof}
	
	\section{Construction of renormalised solutions}\label{sec:renormalised}
	To simplify notation, we henceforth let $\rreg=\varepsilon$, where we recall that by $\rreg>0$ we have denoted the regularisation parameter associated with the reaction rates. The global weak solutions of $\dot Z = \divv(A^\ve(Z)\nabla Z)+R_\ve^\circ$, where $R_\ve^\circ:=(R_\ve, 0)^T$, introduced in Section~\ref{sec:prelim2} will henceforth be denoted by $Z^\ve$, and in this section we aim to study the limit $\ve\to0$.
	
	\subsection{Convergence of a subsequence}
	In the following lemma, we establish a fundamental compactness result, allowing to deduce weak and pointwise convergence of $(c^\ve,u^\ve)$ along a subsequence. As in~\cite[Lemma 2]{Fischer_2015_renormalized}, pointwise convergence of $c^\ve$ is obtained by applying a version of the Aubin--Lions lemma to $(\varphi_i^E(Z^\de))_\ve$, $i=1,\dots,I$, for every $E\in \mathbb{N}$.
	In contrast to~\cite{Fischer_2015_renormalized}, 
	this is, however, not sufficient to deduce the renormalised formulation for the limiting candidate obtained upon $\ve\to0$. 
	This issue is due to the strong cross-diffusion effects driven by gradients of the internal energy density, which result in a lack of a priori estimates of the first term on the RHS of eq.~\eqref{eqchainrule} that are uniform in $\varepsilon>0$ and $E\in \mathbb{N}$.
	The key to resolve this problem lies in a stability result,
	namely the strong convergence of $\nabla u^\de$ in $L^2(\Omega_T)$, which will be deduced from $L^2$-energy identities for the internal energy density (cf.~\eqref{eq:2.wt} and~\eqref{eq:2.wlt} below).
	
	\begin{lemma}
		\label{lemmasubsequence}
		Let Hypotheses~\ref{mainhypo2} hold true and denote by $Z^\de=(c^\de, u^\de)$ the global solution of eq.~\eqref{eq:00.ii} introduced above.
		Then, along a sequence $\de \searrow 0$, $Z^\de$ converges a.e.\ on $[0,\infty) \times \Omega$ to some limit $Z=(c,u)$ with $c_i \log c_i  \in L^\infty(0,\infty; L^1(\Omega))$ for all $i\in \{1, \dotsc, I\}$ and $u\in L^\infty(0,\infty; L^2(\Om)).$
		Furthermore, the weak convergence $\sqrt{c_i^\de} \rightharpoonup \sqrt{c_i}$ and the strong convergence $u^\de \rightarrow u$ in $L^2(0,T; H^1(\Omega))$ hold true for all $T>0$.
	\end{lemma}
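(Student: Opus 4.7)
My plan is to combine weak compactness from the $\ve$-uniform bounds~\eqref{equnifbounds}, an Aubin--Lions argument applied to the truncated quantities $\varphi_i^E(Z^\ve)$ to extract pointwise a.e.\ convergence of $Z^\ve$, and finally an $L^2$-energy identity at the level of the internal energy density to upgrade weak convergence of $\nabla u^\ve$ to strong convergence. The hard part lies in this last step: the cross-diffusion contributions arising from $\nabla u^\ve$ in the equation for $c_i$ cannot be passed to the limit with merely weak $H^1$-control of $u^\ve$, so one must extract strong compactness of $\nabla u^\ve$ from the scalar-like structure of the heat equation.

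\emph{Weak limits.} The bound~\eqref{equnifbounds}, combined with the coercivity $a(Z^\ve)\gtrsim 1$ guaranteed by Hypothesis~\ref{it:H2}, yields uniform control of $c_i^\ve$ in $L^\infty(0,\infty;L\log L)$, of $\sqrt{c_i^\ve}$ in $L^2_\loc(\ol J;H^1(\Omega))$ (through the term $\kappa_{0,i}|\nabla \sqrt{c_i}|^2$ in $P_\ve$), and of $u^\ve$ in $L^\infty(0,\infty;L^2(\Omega))\cap L^2_\loc(\ol J;H^1(\Omega))$. Weak(-$*$) compactness provides, along a subsequence, $\sqrt{c_i^\ve}\rightharpoonup \sqrt{c_i}$, $u^\ve\overset{*}{\rightharpoonup}u$, and $\nabla u^\ve\rightharpoonup \nabla u$. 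The asserted $L^\infty L^1$-bound on $c_i\log c_i$ and $L^\infty L^2$-bound on $u$ will follow from Fatou's lemma once pointwise convergence of $Z^\ve$ is established.

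\emph{Pointwise convergence via Aubin--Lions on truncations.} For each $i\in\{1,\dots,I{+}1\}$ and $E\in\mathbb N$, I apply Aubin--Lions to the bounded sequence $\varphi_i^E(Z^\ve)$. Property~\ref{it:C5} together with the localisation $|Z|\lesssim E$ on $\supp D\varphi_i^E$ (property~\ref{it:C3}) and Lemma~\ref{l:efl} yields an $\ve$-uniform $L^2(0,T;H^1(\Omega))$ bound for $\varphi_i^E(Z^\ve)$. The time derivative is controlled in a dual space via the chain-rule identity~\eqref{eqchainrule}: each term on its right-hand side admits an $E$-dependent but $\ve$-independent bound---the quadratic flux terms by $C(E)(|\nabla u^\ve|^2+P_\ve(Z^\ve))$ through~\eqref{eq:402}, and the reaction term by $\sup_{|Z|\le 2E}|R(Z)|$ via the pointwise domination $|R_\rreg|\le|R|$. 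Aubin--Lions then produces, along a subsequence, $\varphi_i^E(Z^\ve)\to \varphi_i^E(Z)$ a.e.\ on $(0,T)\times\Omega$. A diagonal argument in $E\to\infty$ and $T\to\infty$, combined with property~\ref{it:C6} and the Chebyshev-type bound $|\{\sum_k Z^\ve_k\ge E\}|\le \tfrac{1}{E}\|Z^\ve\|_{L^\infty L^1}$, which vanishes uniformly in $\ve$ as $E\to\infty$, promotes this to $Z^\ve\to Z$ a.e.\ on $[0,\infty)\times\Omega$.

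\emph{Strong convergence of $\nabla u^\ve$.} Since the energy component of~\eqref{eq:00.ii} contains no reaction term, I test it with $u^\ve$ itself, following the truncation/mollification scheme of Lemma~\ref{l:energy}, to derive the $L^2$-energy identity
\begin{align*}
\tfrac{1}{2}\|u^\ve(t)\|_{L^2}^2 + \int_0^t\!\!\int_\Omega a(Z^\ve)|\nabla u^\ve|^2\,\dd x\,\dd s \;=\; \tfrac{1}{2}\|u^0\|_{L^2}^2
\end{align*}
for a.e.\ $t>0$. Passing to the limit in the $u$-component of~\eqref{eq:00.ii}---which only requires weak convergence of $\nabla u^\ve$ together with the a.e.\ convergence $a(Z^\ve)\to a(Z)$ supplied by Step~2 and the continuity of $a$---shows that $u$ weakly solves the limit heat equation $\dot u = \divv(a(Z)\nabla u)$, and the same testing procedure yields the analogous energy identity for $u$. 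Since $a(Z^\ve)\gtrsim 1$ the quotient $1/\sqrt{a(Z^\ve)}$ is uniformly bounded; combined with its a.e.\ convergence to $1/\sqrt{a(Z)}$, a weak-times-strong argument identifies the weak $L^2$-limit of $\sqrt{a(Z^\ve)}\nabla u^\ve$ as $\sqrt{a(Z)}\nabla u$, so weak lower semicontinuity delivers $\int a(Z)|\nabla u|^2 \le \liminf \int a(Z^\ve)|\nabla u^\ve|^2$. Subtracting the two energy identities and invoking lower semicontinuity of $v\mapsto\|v(t)\|_{L^2}^2$ supplies the matching reverse inequality, forcing convergence of norms $\|\sqrt{a(Z^\ve)}\nabla u^\ve\|_{L^2}\to\|\sqrt{a(Z)}\nabla u\|_{L^2}$ and hence, together with the weak convergence, strong convergence in $L^2$. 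The coercivity $a\gtrsim 1$ finally upgrades this to $\nabla u^\ve\to\nabla u$ strongly in $L^2((0,T)\times\Omega)$, which concludes the proof.
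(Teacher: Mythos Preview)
Your proposal is correct and follows essentially the same three-step strategy as the paper: weak compactness from~\eqref{equnifbounds}, Aubin--Lions on the truncations $\varphi_i^E(Z^\ve)$ combined with a diagonal argument in $E$ to extract pointwise convergence of $Z^\ve$, and the $L^2$-energy identity for the heat equation to upgrade weak to strong convergence of $\nabla u^\ve$.

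Two minor deviations are worth noting. First, the paper handles $u^\ve$ by applying Aubin--Lions \emph{directly} (via the bound~\eqref{eq:ut2} on $\partial_t u^\ve$), rather than through the truncations $\varphi_{I+1}^E$, which in the paper are only defined for $i\le I$; this immediately yields $u^\ve\to u$ in $L^2(\Omega_T)$ and hence $\|u^\ve(t)\|_{L^2}\to\|u(t)\|_{L^2}$ for a.e.\ $t$, whereas your route relies on Fatou's inequality at fixed $t$---which still suffices, since combined with weak lower semicontinuity of $\int a(Z^\ve)|\nabla u^\ve|^2$ it squeezes the norms. Second, your appeal to ``the truncation/mollification scheme of Lemma~\ref{l:energy}'' for the time-continuous energy identity glosses over genuine technical work: Lemma~\ref{l:energy} treats only the time-discrete case, and in the paper the identities~\eqref{eq:2.wt}--\eqref{eq:2.wlt} are derived via a partition of unity, boundary flattening, spatial mollification, and the cut-offs $\zeta_E$, since neither $u^\ve$ nor $u$ is known to lie in the admissible test space $L^{s'}(0,T;W^{1,s'})$. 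The strategy you sketch is the right one, but the justification is considerably more delicate than Lemma~\ref{l:energy} suggests.
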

	\begin{proof}
		In a first step, we recall that due to \eqref{equnifbounds}--\eqref{eq:ut2}, $u^\de$ is uniformly bounded in $L^2(0, T; H^1(\Omega))$, while $\partial_t u^\de$ is uniformly bounded in $L^s(0, T; W^{1,s'}(\Omega)^\ast)$. Employing the Aubin--Lions Lemma then gives rise to a subsequence $(u^\varepsilon)_\varepsilon$ converging strongly in $L^2(0, T; L^2(\Omega))$ and pointwise a.e.\ in $\Omega_T$ to some $u \in L^2(0, T; H^1(\Omega))$. The additional regularity $u \in L^\infty(0, \infty; L^2(\Omega))$ follows with Fatou's Lemma.
		
		We now keep $E$ fixed and aim to apply the Aubin--Lions Lemma as stated in \cite[Corollary 7.9]{Roubicek_2013_npde} for proving the existence of a subsequence $(\varphi_i^E(Z^\de))_\ve$ converging strongly in $L^2(0,T; L^2(\Omega))$.
		To this end, we will verify that $\varphi_i^E(Z^\de)$ is $\ve$-uniformly bounded in the Bochner space $L^2(0,T; H^1(\Omega))$ and that the distributional time derivative $\tfrac{\dd}{\dd t} \varphi_i^E(Z^\de)$ is $\ve$-uniformly bounded in the space of Radon measures $\mathcal M([0,T], H^p(\Omega)^*)$ if $p > d/2 + 1$.
		
		The uniform boundedness of $\varphi_i^E(Z^\de)$ in $L^2(0,T; H^1(\Omega))$ follows from the boundedness of $\varphi_i^E$ itself, the $\ve$-uniform bound on $\nabla \sqrt{c_i^\de}$ in $L^2(\Om_T)$ and the identity
		\[
		\nabla \big( \varphi_i^E (Z^\de) \big) = \sum_{j=1}^{I+1} \partial_j \varphi_i^E (Z^\de) \nabla Z_j^\de.
		\]
		In fact, keeping in mind that $\supp D \varphi_i^E$ is a compact subset of $\mathbb R^{I+1}$, we see that $\varphi_i^E(Z^\de)$ is uniformly bounded w.r.t.\ $\ve$ in $L^2(0,T; H^1(\Omega))$ for all fixed $T>0$, $E>0$, and $i \in \{1,\dots,I\}$.
		
		We are left to establish an $\ve$-uniform bound on the time derivative of $\varphi_i^E(Z^\de)$ in $\mathcal M([0,T], H^p(\Omega)^*)$, the topological dual space of~$C^0([0,T], H^p(\Omega))$. For this purpose, we first extend \eqref{eqchainrule} in Proposition \ref{propchainrule} to allow for test functions $\psi \in C^\infty([0,T] \times \ol \Omega)$, which do not necessarily vanish at time $t = T$. This is achieved by a standard approximation procedure carried out on pages 5926--5927 in \cite{ChenJuengel_2019_renormalised} and leads to
		\begin{align}
		\int_\Omega \varphi_i^E&(Z^\de(T, \cdot)) \psi(T, \cdot) \, \dd x \label{eqchainrulewithfinalt}
		- \int_\Omega \varphi_i^E(Z^0) \psi(0, \cdot) \, \dd x - \int_0^T\!\!\int_\Omega \varphi_i^E(Z^\de) \tfrac{\dd}{\dd t}\psi \, \dd x\dd t                                      \\
		= & - \sum_{j,k,l=1}^{I+1} \int_0^T\!\!\int_\Omega \psi \, \partial_j \partial_k \varphi_i^E (Z^\de) A^\ve_{jl}(Z^\de)\nabla Z^\de_l\cdot\nabla Z^\de_k\,\dd x\dd t
		\\& - \sum_{j,l=1}^{I+1} \int_0^T\!\!\int_\Omega \partial_j \varphi_i^E (Z^\de)A^\ve_{jl}(Z^\de)\nabla Z^\de_l\cdot\nabla \psi\,\dd x\dd t
		+ \sum_{j=1}^{I} \int_0^T\!\!\int_\Omega \psi \, \partial_j \varphi_i^E (Z^\de)R_{\ve,j}(Z^\ve) \,\dd x\dd t.
		\end{align}
		Observe that the LHS of eq.~\eqref{eqchainrulewithfinalt} is just the action of the distribution $\frac{\dd}{\dd t} \varphi_i^E(Z^\de) $ on $\psi \in C^\infty([0,T] \times \ol \Omega)$.
		On the other hand, thanks to Lemma~\ref{l:efl} (and in particular~\eqref{eq:105b}), the compactness of $\supp D\varphi_i^E$ and the continuous embedding $H^p(\Omega) \hookrightarrow W^{1,\infty}(\Omega)$ for $p>\tfrac{d}{2}+1$, the RHS of~\eqref{eqchainrulewithfinalt} can be bounded above in modulus by 
		\begin{equation*}
		C(E) \bigg( 1 + \int_0^T\!\!\int_\Omega P_\ve(Z^\ve)\,\dd x\dd t \bigg) \|\psi\|_{C([0,T],H^p(\Om))},
		\end{equation*}
		where the first term is controlled by the initial data (cf.~\eqref{equnifbounds}).
		Since $C^\infty([0,T] \times \ol \Omega)$ is dense in $C^0([0,T], H^p(\Omega))$,
		this implies the asserted $\ve$-uniform bound for $\big\| \frac{\dd}{\dd t} \varphi_i^E(Z^\de) \big\|_{\mathcal M([0,T], (H^p)^*)}$.
		
		Having proven the existence of a subsequence $\varphi_i^E(Z^\de)$ converging in $L^2(0,T;$ $L^2(\Omega))$ (for both $E \in \mathbb N$ and $T>0$ fixed), there exists a further subsequence converging pointwise a.e.\ to a measurable function $v_i^E$ for all $i \in \{1,\dotsc,I\}$ and all $E \in \mathbb N$. This can be shown by applying a diagonal sequence argument.
		
		We can now follow the reasoning in~\cite{Fischer_2015_renormalized}. Thanks to the uniform bounds on $\sum_{j=1}^{I} c_j^\de \log c_j^\de$ 
		in $L^\infty(0,\infty; L^1(\Omega))$ as well as~\ref{it:C5} and~\ref{it:C6}, the functions $\varphi_i^E(Z^\de) \log \varphi_i^E (Z^\de)$ 
		are uniformly bounded in $L^\infty(0,\infty; L^1(\Omega))$ w.r.t.\ $\varepsilon > 0$ and $E \in \mathbb N$. The pointwise a.e.\ convergence of $\varphi_i^E(Z^\de)$ to $v_i^E$ and Fatou's Lemma now entail an $E$-uniform bound on $v_i^E \log v_i^E$ 
		in $L^\infty(0,\infty; L^1(\Omega))$.
		We further proceed as in \cite{Fischer_2015_renormalized} to establish the pointwise a.e.\ convergence of $v^E=(v_i^E)_i$ to some measurable $c=(c_i)_i$ for $E \rightarrow \infty$ satisfying $c_i \log c_i\in L^\infty(0,\infty; L^1(\Omega))$. We start with the elementary observation
		\[
		\sum_{j=1}^{I} \varphi_j^E(Z^\de) + u^\varepsilon < E \ \Rightarrow \ \sum_{j=1}^{I} Z_j^\de + u^\varepsilon < E \ \Rightarrow \ \varphi_j^E(Z^\de) = Z_j^\de = \varphi_j^{\wt E}(Z^\de)
		\]
		for all $j \in \{1,\dotsc,I\}$ and $\wt E > E$, which follows from \ref{it:C9} and \ref{it:C6}. Moreover, if $\sum_{j=1}^{I} v_j^E + u = \lim_{\ve \rightarrow 0} \sum_{j=1}^{I} \varphi_j^E(Z^\de) + u^\varepsilon < E$, then $\sum_{j=1}^{I} \varphi_j^E(Z^\de) + u^\varepsilon < E$ for $\ve$ sufficiently small, and hence, $v_j^E = v_j^{\wt E}$ for all $\wt E > E$. By the uniform bound on $\sum_{j=1}^{I} v_j^E + u$ in $L^1([0,T] \times \Omega)$ for arbitrary but fixed $T>0$ we know that $\sum_{j=1}^{I} v_j^E + u \geq E$ can hold true only on a set of points with vanishing measure in the limit $E \rightarrow \infty$. As a consequence, $v^E=(v^E_i)_{i}$ converges a.e.\ in $\Omega_T$ to a measurable function $c = (c_i)_i$ satisfying, thanks to Fatou's lemma, $c_i \log c_i\in L^\infty(0,\infty; L^1(\Omega))$.
		
		Next, we prove that a subsequence $c_i^\de$ pointwise a.e.\ converges to $c_i$ for $\ve \rightarrow 0$. The uniform bound on $\sum_{j=1}^{I+1} Z_j^\de$ in $L^1([0,T] \times \Omega)$ guarantees that the measure of the subset of $\Omega_T$ where $\sum_{j=1}^{I+1} Z_j^\de \geq E$ holds true tends to zero for $E \rightarrow \infty$, uniformly in $\varepsilon$. Therefore, $\varphi_i^E(Z^\de) \neq c_i^\de$ can be true only on a set of points with vanishing measure in the limit $E \rightarrow \infty$, uniformly in $\varepsilon$. For any $\vartheta > 0$ we now find
		\begin{align*}
		& \mathcal L^{d+1} \bigg( \bigg\{ (t,x) \in \Om_T \ : \ |c_i^\de(t,x) - c_i(t,x)| > \vartheta \bigg\} \bigg)                                 \\
		& \quad \leq \mathcal L^{d+1} \bigg( \bigg\{ (t,x) \in \Om_T \ : \ c_i^\de(t,x) \neq \varphi_i^E(Z^\de)(t,x) \bigg\} \bigg)                  \\
		& \qquad + \mathcal L^{d+1} \bigg( \bigg\{ (t,x) \in \Om_T \ : \ |\varphi_i^E(Z^\de)(t,x) - v_i^E(t,x)| > \frac{\vartheta}{2} \bigg\} \bigg) \\
		& \qquad + \mathcal L^{d+1} \bigg( \bigg\{ (t,x) \in \Om_T \ : \ |v_i^E(t,x) - c_i(t,x)| > \frac{\vartheta}{2} \bigg\} \bigg).
		\end{align*}
		The first term on the right-hand side tends to zero for $E \rightarrow \infty$ as discussed just before. The second term tends to zero for fixed $E$ and $\ve \rightarrow 0$ due to the definition of $v_i^E$, whereas the last term converges to zero for $E \rightarrow \infty$ by the definition of $c_i$. This ensures the convergence of $c^\ve$ to $c$ in measure and, hence, convergence a.e.\ for another subsequence.
		Combined with the uniform boundedness of $c_i^\de \log c_i^\de$ in $L^\infty(0, T; L^1(\Omega))$ for every $T>0$, this implies that $c_i^\de$ converges strongly to $c_i$ in $L^p(0,T; L^1(\Omega))$ for any $p\in[1,\infty)$ and $i\in\{1,\dots,I\}$.
		As a result of the strong convergence of $c_i^\de$ to $c_i$ in $L^1(0,T; L^1(\Omega))$, we also obtain the distributional convergence of $\sqrt{c_i^\de}$ to $\sqrt{c_i}$. In combination with the uniform $L^2(0,T; H^1(\Omega))$ bound on $\sqrt{c_i^\de}$, we deduce that a subsequence $\sqrt{c_i^\de}$ weakly converges to $\sqrt{c_i}$ in $L^2(0,T; H^1(\Omega))$.
		Employing the convergence $u^\de \rightarrow u$ in the distributional sense and the uniform bound~\eqref{eq:ut2} on $\partial_t u^\de$ in $L^s(0,T; W^{1, s'}(\Omega)^\ast)$, where $s=\tfrac{2d+2}{2d+1}$,
		we infer that $\partial_t u^\de \stackrel{\ast}{\rightharpoonup} \partial_t u \ \ \mbox{in}\ L^s(0,T; W^{1, s'}(\Omega)^\ast)$ with the limit $\partial_t u$ again satisfying~\eqref{eq:ut2} by weak-$\ast$ lower semi-continuity.
		The convergence results derived above are sufficient to pass to the limit $\ve\rightarrow 0$ in the equation for $u^\de$ to infer
		\begin{equation}
		\label{equweaklimit}
		\int_0^T \int_{\Omega} a(c, u) \nabla u \cdot \nabla \phi \,\dd x\dd t+ \int_0^T \big\langle \partial_t u, \phi \big\rangle \, \dd t = 0,
		\end{equation}
		where the distributional convergence of $a(c^\ve, u^\ve) \nabla u^\ve$ to $a(c, u) \nabla u$ can be obtained as before using Lemma~\ref{l:1}. Using lower semi-continuity arguments and the above convergence results, we infer from~\eqref{equnifbounds} in particular the bounds
		\begin{equation}
		\|\nabla \sqrt{c}\|_{L^2(\Omega_T)} + \Big\| \sqrt{a} \nabla u \Big\|_{L^2(\Omega_T)}
		+ \Big\|\pi_1^\frac12 (\hat \sigma''+\sum_ic_i\tfrac{w_i''}{w_i})\nabla u\Big \|_{L^2(\Omega_T)}
		\le C(\mathrm{data}).\label{eqfinalbounds}
		\end{equation}
		
		It remains to prove the strong convergence
		\begin{equation}\label{eq:2.1}
		\nabla u^\de \rightarrow \nabla u \quad\text{ in }L^2(\Omega_T).
		\end{equation}
		The key ingredient are the following $L^2$-energy identities, valid for a.e.~$T>0$,
		\begin{align}
		& \label{eq:2.wt}
		\frac12 \int_\Omega u^\de(T, \cdot)^2 \, \dd x\dd t- \frac12 \int_\Omega u_0^2 \, \dd x + \int_0^T \int_\Omega a(c^\de, u^\de) |\nabla u^\de|^2 \, \dd x\dd t= 0,
		\\&\label{eq:2.wlt}
		\frac12 \int_\Omega u(T, \cdot)^2 \, \dd x - \frac12 \int_\Omega u_0^2 \, \dd x + \int_0^T \int_\Omega a(c, u) |\nabla u|^2 \, \dd x\dd t = 0.
		\end{align}
		Let us first assume the validity of these equations and show how they entail~\eqref{eq:2.1}.
		
		Since for a.e.~$T>0$ the first term on the LHS of~\eqref{eq:2.wt} converges to the corresponding term in~\eqref{eq:2.wlt}, the above identities imply that
		\begin{equation*}
		\int_0^T \int_\Omega a(c^\de, u^\de) |\nabla u^\de|^2 \, \dd x\dd t \to  \int_0^T \int_\Omega a(c, u) |\nabla u|^2 \, \dd x\dd t.
		\end{equation*}
		Thanks to the weak convergence $\sqrt{a(c^\de, u^\de)} \nabla u^\de \rightharpoonup \sqrt{a(c, u)} \nabla u$ in $L^2(\Omega_T)$, we infer that $\sqrt{a(c^\de, u^\de)} \nabla u^\de $ converges strongly to $\sqrt{a(c, u)} \nabla u$ in $L^2(\Omega_T)$.
		Since $\sqrt{a}\gtrsim 1$ and $(c^\de, u^\de)\to (c,u)$ a.e., this easily implies~\eqref{eq:2.1}.
		
		\medskip
		
		\underline{Proof of identities~\eqref{eq:2.wt} and~\eqref{eq:2.wlt}:}
		we confine ourselves to the  proof of~\eqref{eq:2.wlt}, since~\eqref{eq:2.wt} can be shown along the same lines.
		Recall that $s = \tfrac{2d+2}{2d+1}$ along with the conjugate exponent $s' = 2d+2$. The function $u$ is then known to satisfy
		\begin{align}\label{eq:500.0}
		\int_0^T \langle \tfrac{\dd u}{\dd t}, \phi \rangle_{X^* \times X} \, \dd t + \int_0^T \int_\Omega a(c, u) \nabla u \cdot \nabla \phi \,\dd x\dd t  = 0
		\end{align}
		for all $\phi\in L^{s'}(0,T;X)$ where $X \coleq W^{1, s'}(\Omega)$.
		Loosely speaking, in order to prove identity~\eqref{eq:2.wlt}, we would like to choose $\phi=u$ as a test function in~\eqref{eq:500.0}. However, the function $u$ may not be sufficiently regular for this choice to be admissible, requiring a regularisation and approximation argument.
		For this purpose, we use again the radially symmetric spatial  mollifier $\wt \rho_\eta$  from \eqref{eqdefmollifier} and let $\rho_\eta=\wt\rho_\eta\ast \wt\rho_\eta$. We further take $\zeta\in C^\infty_c([0,\infty))$ such that $\zeta(z)=\tfrac{1}{2}z^2$ if $z\in[0,1]$ and $\zeta(z)=0$ if $z \ge 2$, and then define $\zeta_E(z)=E^2\zeta(\tfrac{z}{E})$ for $E\ge 1$. Observe that $\zeta_E''(z)=\zeta''(\tfrac{z}{E})$ and, hence, $\sup_{E,z}|\zeta_E''(z)|= \|\zeta''\|_{C([0,\infty))}<\infty$.
		As in the proof of~\cite[Lemma~4]{Fischer_2015_renormalized}, we make use of a partition of unity $\chi_k$ on $\ol \Omega$, i.e.\ we consider functions $\chi_k \in C^\infty(\ol \Omega)$ satisfying $\sum_{k=1}^K \chi_k = 1$ on $\ol \Omega$ such that each $\chi=\chi_k$ is either compactly supported in $\Om$, or supported in some coordinate chart $U\subset\ol\Om$ that is relatively open in $\ol\Om$ and has the property that $U\cap \partial\Om$ can be written as the graph of a suitable Lipschitz function~$\tau$.
		
		In the first situation, we have $\supp \chi \subset \Omega^{4\ol\eta}$ for $\ol\eta > 0$ sufficiently small. We then choose the test function
		\begin{align}\label{eq:500}
		\phi_\eta^E \coleq \rho_\eta \ast \big( \zeta_E'(\rho_\eta \ast u) \chi \big) \in L^\infty(0,T; W^{1,\infty}(\Omega)) \subset L^{s'}(0,T;X)
		\end{align}
		in equation~\eqref{eq:500.0} with $\eta \in (0, \ol\eta)$.
		Since $\chi$ and  $\phi_\eta^E$ are compactly supported in $\Om$,  we have
		\begin{equation}\label{eq:parts.en}
		\begin{split}
		\int_0^T \Big\langle \tfrac{\dd}{\dd t} u, \phi_\eta^E \Big \rangle_{X^\ast \times X} \dd t
		&=\int_0^T \int_\Om\tfrac{\dd}{\dd t} (\rho_\eta\ast u)\zeta_E'(\rho_\eta \ast u) \chi  \,\dd x\dd t
		\\&=\int_\Omega \zeta_E(\rho_\eta\ast u(T,\cdot)) \chi \, \dd x - \int_\Omega \zeta_E(\rho_\eta\ast u_0) \chi \, \dd x.
		\end{split}
		\end{equation}
		Notice that the equation for $u$ (i.e.~\eqref{eq:500.0}) ensures sufficient (time) regularity of $\rho_\eta\ast u$ to justify the above lines.
		Hence, by dominated convergence,
		\begin{align}
		\label{eqtimetermbulkconv}
		\int_0^T \Big\langle \tfrac{\dd}{\dd t} u, \phi_\eta^E \Big \rangle_{X^\ast \times X} \dd t \rightarrow \int_\Omega \zeta_E(u(T, \cdot)) \chi \, \dd x - \int_\Omega \zeta_E(u_0) \chi \, \dd x
		\end{align}
		as $\eta \rightarrow 0$.
		
		On the other hand, we assert that
		\begin{align}
		\label{eqgradtermbulkconv}
		\int_0^T \int_\Omega a(c, u) \nabla u \cdot \nabla \phi^E_\eta \, \dd x\dd t \rightarrow \int_0^T \int_\Omega a(c, u) \nabla u \cdot \nabla \big( \zeta_E'(u) \chi \big) \, \dd x\dd t
		\end{align}
		as $\eta \rightarrow 0$. Reformulating the LHS as
		\begin{align*}
		\int_0^T \int_\Omega a(c, u) \nabla u \cdot \nabla \phi^E_\eta \, \dd x\dd t = \int_0^T \int_{\Omega^{3\eta}} J_\eta(t, x) \cdot K_\eta(t, x) \, \dd x\dd t
		\end{align*}
		with $J_\eta(t, x) \coleq \wt \rho_\eta \ast \nabla \big( \zeta_E'(\rho_\eta \ast u) \chi \big)$ and $K_\eta(t, x) \coleq \wt \rho_\eta \ast (a(c, u) \nabla u)$ for all $(t, x) \in (0, T) \times \Omega^{3\eta}$, we can verify \eqref{eqgradtermbulkconv} by using the same arguments as in the proof of Lemma \ref{lemmachainrulelimit}: in the current situation, Step~1 reads
		\begin{align}
		|K_\eta(t,x)| & \leq C \eta^{1-d} \int_{B(x,3\eta)} \Big( |\nabla u|^2 + P(Z) \Big) \, \dd y                                                                                            \\
		& \quad + C \eta^{-\frac{d}{2}} \Big( \int_{B(x,3\eta)} \Big( |\nabla u|^2 + P(Z) \Big) \, \dd y \Big)^\frac12 \inf_{\wt x \in B(x,\eta)} |(\rho_\eta \ast (1+u))(\wt x)|.
		\end{align}
		Step 2 is slightly more complex. Here, we estimate
		\begin{align}
		|J_\eta (t, x)| & \leq \big|\wt \rho_\eta \ast \big( \chi \nabla \zeta_E'(\rho_\eta \ast u) \big) \big| + \big|\wt \rho_\eta \ast \big( \zeta_E'(\rho_\eta \ast u) \nabla \chi \big) \big| \\
		& \leq C
		\min \Big\{ \eta^{-1}, 1 + \eta^{-\frac{d}{2}} \Big( \int_{B(x,3\eta)} |\nabla u|^2 \, \dd y \Big)^\frac12 \Big\},
		\end{align}
		where the first part of the estimate follows as in \cite{ChenJuengel_2019_renormalised}, while the second part is trivially bounded in terms of a constant that can be controlled by the last line. The upper bound from Step 3 is then given by
		\[
		|J_\eta(t,x) \cdot K_\eta(t,x)| \leq C E \eta^{-d} \int_{B(x,3\eta)}\big( 1 + |\nabla u|^2 + P(Z) \big) \, \dd y
		\]
		for all $(t, x) \in (0, T) \times \Omega^{3\eta}$. Defining $J_0(t, x) \coleq \nabla \big( \zeta_E'(u) \chi \big)$ and $K_0(t, x) \coleq a(c, u) \nabla u$, Step 4 reduces to $|J_0(t, x) K_0(t, x)| \leq C (1 + |\nabla u|) a(c, u) |\nabla u| \in L^1((0, T) \times \Omega)$ for all $(t, x) \in (0, T) \times \Omega^{3\eta}$ and Step 5 can be reused in a one-to-one fashion to establish \eqref{eqgradtermbulkconv}.
		
		We are left to deal with the case that $\supp \chi \subset U$ for some coordinate chart $U \subset \ol\Omega$ as above that satisfies $U \cap \partial \Omega \neq \emptyset$. Employing the notation of~\cite[Proof of Lemma~4]{Fischer_2015_renormalized} and using the fact that $\Omega$ is a Lipschitz domain, there exist a set $V \subset \mathbb R_0^+ \times \mathbb R^{d-1}$ (relatively open in $\mathbb R_0^+ \times \mathbb R^{d-1}$)
		and a bi-Lipschitz homeomorphism $\theta: V \rightarrow U$ with the property that after appropriate rotations and translations of $\Omega$ one has
		\begin{align}
		\label{eqdeftheta}
		\theta(y) = (y_1 + \tau(y_2, \dotsc, y_d), y_2, \dots, y_d)
		\end{align}
		for some suitable Lipschitz function $\tau : \mathbb R^{d-1} \rightarrow \mathbb R$ and
		all $y \in V$.
		Note that $\det D\tau = 1$ a.e.\ in $\Omega$. We then let $\Theta: [0, T] \times V \rightarrow [0, T] \times U$, $\Theta(t, y) \coleq (t, \theta(y))$, and introduce the transformed variables
		\begin{align}
		\hat c \coleq c \circ \Theta, \quad \hat u \coleq u \circ \Theta, \quad \hat \chi \coleq \chi \circ \theta.
		\end{align}
		We now extend these transformed variables by mirroring to $[0, T] \times V_a$ and $V_a$, respectively, where $V_a \coleq V \cup V_m$ and $V_m \coleq \{ y \in \mathbb R^d \ | \ (-y_1, y') \in V\}$ using the short hand notation $y' \coleq (y_2, \dotsc, y_d)$. More precisely,
		\begin{align}
		\hat c(t, y_1, y') \coleq \hat c(t, -y_1, y'), \quad \hat u(t, y_1, y') \coleq \hat u(t, -y_1, y'), \quad \hat \chi(y_1, y') \coleq \hat \chi(-y_1, y')
		\end{align}
		for all $(y_1, y') \in V_m$.
		Furthermore, we extend $\theta$ to $V_a$ by imposing the same formula as in \eqref{eqdeftheta} but now valid for all $(y_1, y') \in V_a$.
		Observe that for $\hat\phi:=\phi\circ\Theta$
		\begin{equation*}
		\int_0^T \int_U a(c, u) \nabla u \cdot \nabla \phi \, \dd x\dd t
		= \int_0^T \int_V a(\hat c, \hat u) \big( (D\theta)^{-T} \nabla \hat u \big)\cdot  \big( (D\theta)^{-T} \nabla \hat\phi\big) \,\dd y\dd t,
		\end{equation*}
		and a similar expression holds for the integral involving the time derivative. We will now show that an identity analogous to~\eqref{eq:500.0} holds true for the extended quantities on $V_a$ in order to essentially reduce the problem to the first situation where $\supp\chi\subset\subset\Om$.
		While on the set $V$ one has
		\begin{align}
		D\theta =
		\begin{pmatrix}
		1 & \partial_2 \tau & \dots  & \partial_d \tau \\
		& 1               &        &                 \\
		&                 & \ddots &                 \\
		&                 &        & 1
		\end{pmatrix},
		\qquad
		(D\theta)^{-T} =
		\begin{pmatrix}
		1                &   &        &   \\
		-\partial_2 \tau & 1 &        &   \\
		\vdots           &   & \ddots &   \\
		-\partial_d \tau &   &        & 1
		\end{pmatrix},
		\end{align}
		the corresponding matrices on $V_m$ read
		\begin{align}
		D\theta =
		\begin{pmatrix}
		-1 & \partial_2 \tau & \dots  & \partial_d \tau \\
		& 1               &        &                 \\
		&                 & \ddots &                 \\
		&                 &        & 1
		\end{pmatrix},
		\qquad
		(D\theta)^{-T} =
		\begin{pmatrix}
		-1              &   &        &   \\
		\partial_2 \tau & 1 &        &   \\
		\vdots          &   & \ddots &   \\
		\partial_d \tau &   &        & 1
		\end{pmatrix}.
		\end{align}
		As a consequence, we find
		\begin{equation*}
		\int_0^T \int_U a(c, u) \nabla u \cdot \nabla \phi \, \dd x\dd t
		= \frac{1}{2}\int_0^T \int_{V_a} a(\hat c, \hat u) \big( (D\theta)^{-T} \nabla \hat u \big)\cdot  \big( (D\theta)^{-T} \nabla \hat\phi\big) \,\dd y\dd t,
		\end{equation*}
		by using the symmetry w.r.t.\ $y_1$ of the terms inside the gradients and the structure of the first column of $(D\theta)^{-T}$ on $V$ and $V_m$, respectively.
		Using similar (but simpler) symmetry properties for the integral involving the time derivative, this allows us to infer
		\begin{equation*}
		\int_0^T \langle\tfrac{\dd }{\dd t}\hat u,\hat\phi\rangle \,\dd t
		= \int_0^T \int_{V_a} a(\hat c, \hat u) \big( (D\theta)^{-T} \nabla \hat u \big)\cdot  \big( (D\theta)^{-T} \nabla \hat\phi\big)  \,\dd y\dd t,
		\end{equation*}
		where $\langle\tfrac{\dd }{\dd t}\hat u,\hat\phi\rangle:=\langle\tfrac{\dd }{\dd t}u,\hat\phi(t,(\theta_{|V})^{-1})\rangle+\langle\tfrac{\dd }{\dd t}u,\hat\phi(t,O\circ(\theta_{|V})^{-1})\rangle$, where $O(y)=(-y_1,y').$
		We now choose the (symmetric) test function
		\begin{align}
		\hat\phi :=\hat\phi^E_\eta:= \rho_\eta \ast ( \zeta_E' (\rho_\eta \ast \hat u) \hat \chi ),
		\end{align}
		where $\rho_\eta$, $0<\eta\ll 1$, and $\zeta^E$, $E\gg 1$, are as before.
		The symmetry of $\hat\phi^E_\eta$ follows from the radial symmetry of $\wt\rho_\eta$. Moreover, we have $\supp\hat\chi\subset\subset V_a.$
		We can therefore argue essentially as in the first situation (derivation of~\eqref{eqtimetermbulkconv} and~\eqref{eqgradtermbulkconv}), the main difference being the appearance of the pull-backs of the gradients in the integral on the RHS, which are, however, harmless thanks to the boundedness of $(D\theta)^{-T}$.
		Concerning the term involving $\tfrac{\dd }{\dd t}\hat u$, we omit here a standard approximation procedure to justify the chain rule argument for the time derivative analogous to~\eqref{eq:parts.en} (one can use e.g.\;the methods in~\cite[Chapter~7]{Roubicek_2013_npde}).
		In the limit $\eta\to0$, we then obtain
		\begin{multline*}
		\int_{V_a} \zeta_E(\hat u(T, \cdot)) \hat\chi \, \dd y-\int_{V_a}\zeta_E(\hat u_0) \hat\chi \, \dd y
		\\=  \int_0^T \int_{V_a} a(\hat c, \hat u)\big((D\theta)^{-T} \nabla \hat u\big)\cdot  (D\theta)^{-T}\nabla \big( \zeta_E'(\hat u) \hat \chi \big) \,\dd y\dd t
		\end{multline*}
		or equivalently
		\begin{equation*}
		\int_\Omega \zeta_E(u(T, \cdot)) \chi \, \dd x-\int_\Omega \zeta_E(u_0) \chi \, \dd x
		=  	\int_0^T \int_U a(c, u) \nabla u \cdot \nabla \big( \zeta_E'( u) \chi \big)   \, \dd x\dd t.
		\end{equation*}
		In combination, we infer that
		\begin{align}
		\int_\Omega \zeta_E(u(T, \cdot)) \chi_k \, \dd x - \int_\Omega \zeta_E(u_0) \chi_k \, \dd x + \int_0^T \int_\Omega a(c, u) \nabla u \cdot \nabla \big( \zeta_E'(u) \chi_k \big) \, \dd x\dd t = 0
		\end{align}
		for a.e.\ $T > 0$ and all $k=1,\dots, K$, and thus, upon summation over $k$,
		\begin{align}
		\int_\Omega \zeta_E(u(T, \cdot)) \, \dd x - \int_\Omega \zeta_E(u_0) \, \dd x + \int_0^T \int_\Omega a(c, u) |\nabla u|^2 \zeta_E''(u) \, \dd x\dd t = 0.
		\end{align}
		Lebesgue's dominated convergence theorem finally allows us to pass to the limit $E \rightarrow \infty$ for a.e.\ $T > 0$ to establish \eqref{eq:2.wlt}.
	\end{proof}

	\subsection{Preliminary PDE for $\varphi_i^E(Z)$}
	As it is generally not possible to directly obtain the desired equation for $\xi(Z)$ by passing to the limit $\varepsilon \rightarrow 0$ in the equation for $\xi(Z^\varepsilon)$, where $\xi \in C^\infty([0,\infty)^{I+1})$ has compactly supported derivatives, we first pass to the limit $\varepsilon \rightarrow 0$ in \eqref{eqchainrulewithfinalt}. Thanks in particular to the strong convergence of $\nabla u^\ve$, this leads to an equation for the truncations $\varphi_i^E(Z)$ involving signed Radon measures $\mu_i^E$  that vanish in the limit $E \rightarrow \infty$. 
	The Radon measures $\mu_i^E$ arise due to the terms in eq.~\eqref{eqchainrulewithfinalt} involving squared gradients of the form $\nabla c_l\cdot\nabla c_k$ and the lack of strong convergence in $L^2$ of $\nabla\sqrt{c_i}$ for all $i$.
	
	\begin{lemma}
		\label{lemmalimitepsilon}
		Let $Z$ be the limiting function obtained in Lemma \ref{lemmasubsequence}. Then, for all $E, T>0$ and $\psi \in C^\infty_c([0,T)\times\ol\Omega)$ 
		the function $\varphi_i^E(Z)$ satisfies the identity
		\begin{align}
		- \int_\Omega \varphi_i^E(Z^0) & \psi(0, \cdot) \, \dd x - \int_0^T\!\!\int_\Omega \varphi_i^E(Z) \tfrac{\dd}{\dd t} \psi \, \dd x\dd t  \label{eqchainrulelimit}               \\
		=                              & - \int_0^T\!\!\int_\Omega \psi \, d\mu_i^E(t, x)                                                                                               \\
		& - \sum_{j,k=1}^{I+1} \int_0^T\!\!\int_\Omega \psi \, \partial_j \partial_k \varphi_i^E (Z) A_{j,I+1}(Z)\nabla u \cdot \nabla Z_k \, \dd x\dd t
		\\& - \sum_{j,l=1}^{I+1}\int_0^T\!\!\int_\Omega \partial_j \varphi_i^E (Z)A_{jl}(Z)\nabla Z_l \cdot\nabla\psi \, \dd x\dd t                     \\
		& + \sum_{j=1}^I \int_0^T\!\!\int_\Omega \psi \, \partial_j \varphi_i^E (Z) R_j(Z)\, \dd x\dd t,
		\end{align}
		where $\mu_i^E$ is a sequence of signed Radon measures with the property that
		\begin{equation}
		\label{eqlimitmuie}
		\lim_{E \rightarrow \infty} |\mu_i^E|([0,T) \times \ol \Omega) = 0
		\end{equation}
		for all $T > 0$ and every $i \in \{1, \dotsc, I\}$. 
		Moreover, under the same hypotheses we have
		\begin{multline}
		\label{equweakequation}
		- \int_\Omega Z^0_{I+1} \psi(0, \cdot) \, \dd x - \int_0^T\!\!\int_\Omega Z_{I+1} \tfrac{\dd}{\dd t} \psi \, \dd x \dd t \\
		= -\int_0^T\!\!\int_{\Omega} A_{I+1,I+1}(Z) \nabla Z_{I+1} \cdot \nabla \psi \, \dd x\dd t.
		\end{multline}
	\end{lemma}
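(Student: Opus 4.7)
The identity \eqref{equweakequation} for $Z_{I+1}=u$ reduces immediately to \eqref{equweaklimit} from Lemma~\ref{lemmasubsequence}, so I would concentrate on \eqref{eqchainrulelimit}. My plan is to set $\rreg=\varepsilon$ and pass to the limit $\varepsilon\to0$ term-by-term in \eqref{eqchainrulewithfinalt} applied to $Z^\varepsilon$ and a test function $\psi\in C^\infty_c([0,T)\times\ol\Omega)$ (for which the $t=T$ boundary term vanishes), relying on the three ingredients supplied by Lemma~\ref{lemmasubsequence}: the pointwise a.e.\ convergence $Z^\varepsilon\to Z$, the weak $L^2(0,T;H^1(\Omega))$-convergence of $\sqrt{c^\varepsilon_i}$, and -- crucially -- the \emph{strong} $L^2(0,T;H^1(\Omega))$-convergence of $u^\varepsilon$. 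Several terms are routine: the LHS bulk term passes by dominated convergence via \ref{it:C8} and \eqref{equnifbounds}; the reaction term, whose integrand is supported in $\{|Z^\varepsilon|\le 2E\}$ by \ref{it:C3}, is bounded there (continuity of $R$ and uniform convergence $R_\varepsilon\to R$ on compacts) and a.e.\ convergent; and the first-order flux term is $L^2(\Omega_T)$-bounded of order $E\,\|P_\varepsilon^{1/2}\|_{L^2}$ via \eqref{eq:105b} of Lemma~\ref{l:efl}, with its limit identified by writing $\nabla c^\varepsilon_l=2\sqrt{c^\varepsilon_l}\nabla\sqrt{c^\varepsilon_l}$ for $l\le I$ and combining bounded-a.e.-convergent coefficients with weak/strong $L^2$-convergence of the gradient factors.

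The real work lies in the second-order term $\mathcal T_2(Z^\varepsilon)=\sum_{j,k,l}\int\psi\,\partial_j\partial_k\varphi^E_i(Z^\varepsilon)\,A^\varepsilon_{jl}(Z^\varepsilon)\,\nabla Z^\varepsilon_l\cdot\nabla Z^\varepsilon_k$. Exploiting the block structure \eqref{eq:defb*} -- namely $A^\varepsilon_{jl}=a^\varepsilon_j\delta_{jl}$ for $j,l\in\{1,\dots,I\}$ and $A^\varepsilon_{I+1,l}=0$ for $l\le I$ -- I would split $\mathcal T_2=\mathcal T_2^{\mathrm{heat}}+\mathcal T_2^{\mathrm{conc}}$ depending on whether $l=I+1$ or $l\le I$. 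The heat part $\mathcal T_2^{\mathrm{heat}}=\sum_{j,k}\int\psi\,\partial_j\partial_k\varphi^E_i(Z^\varepsilon)\,A^\varepsilon_{j,I+1}(Z^\varepsilon)\,\nabla u^\varepsilon\cdot\nabla Z^\varepsilon_k$ passes to the limit cleanly: the strong $L^2$-convergence of $\nabla u^\varepsilon$, the boundedness and a.e.\ convergence of the coefficient $\psi\,\partial_j\partial_k\varphi^E_i(Z^\varepsilon)\,A^\varepsilon_{j,I+1}(Z^\varepsilon)$ (compactly supported in $\{|Z^\varepsilon|\le 2E\}$ via \ref{it:C3}), and the weak $L^2$-convergence of $\nabla Z^\varepsilon_k$ (after $\sqrt{c}$-factorisation when $k\le I$) together yield exactly the second term on the RHS of \eqref{eqchainrulelimit}.

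The concentration part $\mathcal T_2^{\mathrm{conc}}=\sum_{j=1}^I\sum_{k=1}^{I+1}\int\psi\,\partial_j\partial_k\varphi^E_i(Z^\varepsilon)\,a^\varepsilon_j(Z^\varepsilon)\,\nabla c^\varepsilon_j\cdot\nabla Z^\varepsilon_k$ lacks strong compactness, so I would instead treat it as a signed measure on $[0,T)\times\ol\Omega$. Using \ref{it:C6} to confine $\supp D^2\varphi^E\subset\{E\le|Z^\varepsilon|\le 2E\}$, \ref{it:C2} to bound $|D^2\varphi^E(Z^\varepsilon)|\le K_1/E$ there, and combining $|\nabla c^\varepsilon_l|^2=4c^\varepsilon_l|\nabla\sqrt{c^\varepsilon_l}|^2$ with the on-support estimate $a^\varepsilon_j\sqrt{c^\varepsilon_j c^\varepsilon_k}\lesssim E$ (taking $\varepsilon\to 0$ first so that $\varepsilon c^\varepsilon_j$ contributions are absorbed), a Cauchy--Schwarz argument would yield a total-variation bound schematically of the form
\begin{equation*}
\|\mathcal T_2^{\mathrm{conc}}(Z^\varepsilon)\|_{\mathcal M([0,T)\times\ol\Omega)}\lesssim \|\psi\|_\infty\sum_{j,k\le I}\Bigl(\int\!\!\int_{\{|Z^\varepsilon|\ge E\}}\!|\nabla\sqrt{c^\varepsilon_j}|^2\,\dd x\,\dd t\Bigr)^{1/2}\!\|\nabla\sqrt{c^\varepsilon_k}\|_{L^2(\Omega_T)} + \frac{C}{\sqrt E},
\end{equation*}
where the $C/\sqrt E$ term absorbs the mixed $k=I+1$ contribution via the uniform $L^2$-bound on $\nabla u^\varepsilon$. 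Weak-$*$ compactness in $\mathcal M([0,T)\times\ol\Omega)$ then extracts a subsequential weak-$*$ limit $\mu_i^E$, and \eqref{eqchainrulelimit} follows by combining all the above passages to the limit.

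The hard part will be establishing $|\mu_i^E|([0,T)\times\ol\Omega)\to 0$ as $E\to\infty$. Weak-$*$ lower-semicontinuity reduces this to showing the above upper bound vanishes. The $C/\sqrt E$ term is immediate. For the pure-concentration contribution, the difficulty is that $\int_{\{|Z^\varepsilon|\ge E\}}|\nabla\sqrt{c^\varepsilon_j}|^2$ need not vanish from the mere weak $L^2$-convergence of $\nabla\sqrt{c^\varepsilon_j}$, since weak $L^2$-convergence does not imply $L^1$-equi-integrability of the squared gradients. My plan to close this gap is to exploit the higher integrability of $c^\varepsilon_j$ coming from Gagliardo--Nirenberg interpolation between the $L^\infty(L\log L)$-bound in \eqref{equnifbounds} and the weak $L^2(H^1)$-bound on $\sqrt{c^\varepsilon_j}$, combined with the Chebyshev-type control $|\{|Z^\varepsilon|\ge E\}|\lesssim 1/E$, to upgrade the decay on the small-measure set via H\"older and the $L^1$-bound on $|\nabla\sqrt{c^\varepsilon_j}|^2$ into a uniform (in $\varepsilon$) vanishing as $E\to\infty$. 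This is the key technical step of the proof.
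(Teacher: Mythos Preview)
Your overall strategy---passing to the limit $\varepsilon\to0$ in \eqref{eqchainrulewithfinalt}, treating the heat-driven second-order terms via the strong $L^2$-convergence of $\nabla u^\varepsilon$, and packaging the concentration-driven second-order terms into a Radon measure---matches the paper's proof. The routine passages (LHS, reaction term, first-order flux, and $\mathcal T_2^{\mathrm{heat}}$) are handled exactly as in the paper, and your extraction of the weak-$*$ limit $\mu_i^E$ from the uniformly bounded $\mathcal T_2^{\mathrm{conc}}$ is also correct.

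The gap is in your plan for \eqref{eqlimitmuie}. Your bound reduces $|\mu_i^E|$ to controlling $\int\!\!\int_{\{|Z^\varepsilon|\ge E\}}|\nabla\sqrt{c_j^\varepsilon}|^2$ uniformly in $\varepsilon$, and you propose to kill this via Gagliardo--Nirenberg for $c_j^\varepsilon$, Chebyshev on $|\{|Z^\varepsilon|\ge E\}|$, and H\"older. This cannot work: H\"older on a small set requires $|\nabla\sqrt{c_j^\varepsilon}|^2\in L^p$ for some $p>1$ uniformly in $\varepsilon$, and you only have the $L^1$-bound from \eqref{equnifbounds}. Higher integrability of $c_j^\varepsilon$ (which GN does give) is irrelevant---the integrand is the squared gradient, not the concentration. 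You are implicitly asking for equi-integrability of $\{|\nabla\sqrt{c_j^\varepsilon}|^2\}_\varepsilon$, which weak $L^2$-convergence of $\nabla\sqrt{c_j^\varepsilon}$ does not provide.

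The paper circumvents this by a layered argument: set $\nu^{\varepsilon,L}:=\int\!\!\int\chi_{\{|Z^\varepsilon|\in[L-1,L)\}}(P_\varepsilon(Z^\varepsilon)+|\nabla u^\varepsilon|^2)$, so that $\sum_L\nu^{\varepsilon,L}\le C(\mathrm{data})$ uniformly. Using \ref{it:C2} one gets $|\mu_i^{\varepsilon,E}|\lesssim\sum_L\nu^{\varepsilon,L}\cdot\sup_{|\tilde Z|\in[L-1,L)}|\tilde Z|\,|D^2\varphi_i^E(\tilde Z)|$, a finite sum for each fixed $E$. After $\varepsilon\to0$ (weak-$*$ lsc plus Fatou for the counting measure, giving $\sum_L\liminf_\varepsilon\nu^{\varepsilon,L}\le C$), the weight $\sup_{|\tilde Z|\in[L-1,L)}|\tilde Z|\,|D^2\varphi_i^E(\tilde Z)|$ is bounded by $K_1$ for all $E$ via \ref{it:C2} and tends to $0$ as $E\to\infty$ for each fixed $L$ via \ref{it:C7}. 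Dominated convergence for the sum over $L$ then yields $|\mu_i^E|\to0$. The point is that this argument never requires uniform smallness of the dissipation on $\{|Z^\varepsilon|\ge E\}$; it only needs summability of the layered dissipation and the pointwise (in $L$) vanishing of the truncation weights.
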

	\begin{proof}
		We stress that \eqref{equweakequation} is a reformulation of \eqref{equweaklimit}, which serves as the analogue of \eqref{eqchainrulelimit} for the internal energy but without the need for the functions $\varphi_i^E$.
		
		For proving \eqref{eqchainrulelimit}, we aim to pass to the limit $\ve \rightarrow 0$ in~\eqref{eqchainrulewithfinalt} (or equivalently in~\eqref{eqchainrule} with $\rreg=\ve$) using the convergence properties of $Z^\ve$ to $Z$ established in Lemma~\ref{lemmasubsequence}. The terms on the LHS converge to the LHS of~\eqref{eqchainrulelimit} thanks to dominated convergence. This argument also applies to the last term on the RHS involving the reactions. Next, we make the crucial observation that, thanks to the strong convergence of $\nabla u^\ve\to\nabla u$ in $L^2(\Om_T)$, the first terms on the RHS of~\eqref{eqchainrulewithfinalt} with $l=I+1$ (involving the thermodiffusive cross terms) converge to the second term on the RHS of~\eqref{eqchainrulelimit}.
		Here, one also uses the weak convergence $\nabla \sqrt{c_j^\de} \rightharpoonup \nabla \sqrt{c_j}$ in $L^2(0,T; L^2(\Omega))$, Lemma~\ref{l:efl}, the uniform bounds~\eqref{equnifbounds},
		Lemma \ref{l:weakConv} and the fact that $D \varphi_i^E$ is compactly supported. A similar (but simpler) reasoning allows to pass to the limit in the penultimate line. Regarding the remaining terms in the first line of the RHS of~\eqref{eqchainrulewithfinalt}, we need to take more care since they
		involve products $\nabla c^\de_j \cdot \nabla c^\de_k$ of merely weakly converging functions.
		To treat these terms we define the signed Radon measures
		\begin{align*}
		\mu_{i}^{\dee} & \coleq \sum_{k=1}^{I+1}\sum_{j=1}^I \partial_j \partial_k \varphi_i^E(Z^\de)A_{jj}^\ve(Z^\ve)\nabla c_j^\ve\cdot\nabla Z_k\, \dd x\dd t,
		\end{align*}
		which are easily seen to be uniformly bounded in $\ve\in(0,1]$ as long as $E>0$ is kept fixed.
		A key property is the fact that
		the measures $\mu_{i}^{\dee}$ can also be controlled uniformly in $E$, which will allow us to infer~\eqref{eqlimitmuie}.
		To see this, we note that thanks to the properties \ref{it:C2} and \ref{it:C3} of $\varphi_i^E$,  the pointwise estimate~\eqref{eq:101c} of $A_{jj}^\ve(Z^\ve)\nabla c_j^\ve$, the fact that $P_\ve^\frac{1}{2}(Z^\ve)$ controls $\ve^\frac{1}{2}|\nabla c_k|+|\nabla\sqrt{c_k}|$ and the $\ve$-uniform control of $\nabla u^\ve$ in $L^2(\Om_T)$ (due to~\eqref{equnifbounds} and~\ref{it:H2}),  we have
		\[
		|\mu_{i}^\dee|([0,T) \times \ol \Omega) \lesssim \int_0^T\!\!\int_\Om \big(P_\ve(Z^\ve)+|\nabla u^\ve|^2\big)\,\dd x\dd t\le C(\mathrm{data}).
		\]
		By the weak-$\ast$ compactness of Radon measures, there exists a subsequence $\mu_{i}^\dee$ that converges weak-$\ast$ in measure to some Radon measure $\mu_i^E$ as $\de \rightarrow 0$.
		
		For proving that $\mu_i^E$ vanishes in the limit $E \rightarrow \infty$, we introduce the non-negative measures
		\[
		\nu^{\de,L} \coleq \chi_{\{ |(c^\de, u^\de)| \in [L-1,L) \}} \big(P_\ve(Z^\ve)+|\nabla u^\ve|^2\big) \dd x\dd t,\quad L\in\mathbb{N},
		\]
		which satisfy the bound $\sum_{L=1}^\infty\nu^{\ve,L}([0,T) \times \ol \Omega)\lesssim C(\mathrm{data})$.
		
		As before, we employ \ref{it:C2} to estimate
		\begin{align*}
		|\mu_{i}^\dee|([0,T) \times \ol \Omega)                                                                                                                                       
		& \ \lesssim\sum_{L=1}^\infty \int_0^T\!\!\int_\Om\chi_{\{ |(c^\de, u^\de)| \in [L-1,L) \}} |Z^\ve||D^2\varphi_i^E(Z^\de)| \big(P_\ve(Z^\ve)+|\nabla u^\ve|^2\big)\, \dd x\dd t \\
		& \ \lesssim \sum_{L=1}^\infty \nu^{\de,L}([0,T) \times \ol \Omega) \sup_{|\tilde Z| \in [L-1,L)}|\tilde Z||D^2\varphi_i^E(\tilde Z)|.
		\end{align*}
		Observe that since $D^2\varphi^E_i$ is compactly supported, for fixed $E \in \mathbb N$ only finitely many terms on the RHS are non-zero. This allows us to estimate
		\begin{align*}
		|\mu_i^E|([0,T) \times \ol \Omega) & \leq \liminf_{\de \rightarrow 0} |\mu_{i}^\dee|([0,T) \times \ol \Omega)                                                                                         \\
		& \lesssim \sum_{L=1}^\infty \liminf_{\de \rightarrow 0} \nu^{\de,L}([0,T) \times \ol \Omega) \sup_{{|\tilde Z| \in [L-1,L)}} |\tilde Z| |D^2\varphi_i^E(\tilde Z)|,
		\end{align*}
		where the first inequality is due to the weak-$\ast$ lower semi-continuity of the total variation of Radon measures on an open set.
		
		Next, applying Fatou's Lemma (for the counting measure on $\mathbb N$), we find
		\[
		\sum_{L=1}^\infty \liminf_{\de \rightarrow 0} \nu^{\de,L}([0,T) \times \ol \Omega) \leq \liminf_{\de \rightarrow 0} \sum_{L=1}^\infty \nu^{\de,L}([0,T) \times \ol \Omega) \le C(\mathrm{data}).
		\]
		By using \ref{it:C2}, \ref{it:C7}, the previous estimate, and the dominated convergence theorem (for the counting measure on $\mathbb N$), we are led to
		\begin{align*}
		\limsup_{E \rightarrow \infty} |\mu_i^E|([0,T) \times \ol \Omega)                                                                                                                                                              & \ \lesssim  \sum_{L=1}^\infty \liminf_{\de \rightarrow 0} \nu^{\de,L}([0,T) \times \ol \Omega) \lim_{E \rightarrow \infty} \sup_{{|\tilde Z| \in [L-1,L)}}|\tilde Z| |D^2\varphi_i^E(\tilde Z)| \\
		& \  \lesssim \sum_{L=1}^\infty \liminf_{\de \rightarrow 0} \nu^{\de,L}([0,T) \times \ol \Omega) \cdot 0 \, = \, 0.
		\end{align*}
		This concludes the proof.
	\end{proof}

	\subsection{Proof of the existence of global renormalised solutions}\label{ssec:proof.renorm}
	We are now in a position to deduce the equation for $\xi(Z)$ as stated in Definition \ref{def:RenormSol}.
	In fact, we derive an approximate expression for the weak time derivative of $\xi(\varphi^E(Z), Z_{I+1})$ from which the equation for $\xi(Z)$ then emerges when passing to the limit $E \rightarrow \infty$.
	
	\begin{proof}[Proof of Theorem \ref{theoremexistence}]
		In order to prove that $Z$ is a global renormalised solution of~\eqref{eq:system}, let $T\in(0,\infty)$ be arbitrary. Keeping in mind that $Z$ satisfies \eqref{eqchainrulelimit} and
		\eqref{equweakequation}, the hypotheses of Lemma \ref{lemmachainrulefischer}
		are fulfilled if we define
		\begin{align*}
		v_i \coleq     & \ \varphi_i^E(Z), \quad (v_0)_i \coleq \varphi_i^E(Z^0),     \quad                                                                                             
		v_{I+1} \coleq  \ Z_{I+1}, \quad (v_0)_{I+1} \coleq Z^0_{I+1}, \\
		\nu_i \coleq & -\mu_i^E, \quad \nu_{I+1} \coleq 0, \\
		w_i \coleq   & - \sum_{j,k=1}^{I+1} \partial_j \partial_k \varphi_i^E (Z)A_{j,I+1}(Z)\nabla u \cdot \nabla Z_k + \sum_{j=1}^I \partial_j \varphi_i^E (Z) R_j(Z), \\
		w_{I+1} \coleq & \ 0, \\
		z_i \coleq   & - \sum_{j,l=1}^{I+1} \partial_j \varphi_i^E (Z)A_{jl}(Z)\nabla Z_l, \quad
		z_{I+1} \coleq  \ -A_{I+1,I+1}(Z) \nabla Z_{I+1}.
		\end{align*}
		As a consequence, the function $\xi(\varphi^E(Z), Z_{I+1})$ satisfies for all $\psi \in C^\infty_c([0,T)\times\ol\Omega)$ 
		the estimate
		\begin{align*}
		\bigg| -\int_0^T & \!\!\int_\Omega \xi(\varphi^E(Z), Z_{I+1}) \tfrac{\dd}{\dd t} \psi \,  \dd x\dd t  - \int_\Omega \xi(\varphi^E(Z^0), Z_{I+1}^0) \psi(0, \cdot) \, \dd x                                                           \\
		& \quad + \sum_{i=1}^I\sum_{k,j,l=1}^{I+1} \int_0^T\!\!\int_\Omega \psi \partial_i \partial_k \xi (\varphi^E(Z), Z_{I+1}) \partial_j \varphi_i^E (Z) A_{jl}(Z)\nabla Z_l\cdot \nabla \varphi_k^E(Z) \,  \dd x\dd t \\
		& \quad + \sum_{k=1}^{I+1} \int_0^T\!\!\int_\Omega \psi \partial_{I+1} \partial_k \xi (\varphi^E(Z), Z_{I+1}) A_{I+1,I+1}(Z)\nabla Z_{I+1} \cdot \nabla \varphi_k^E(Z) \,  \dd x\dd t \\
		& \quad + \sum_{i=1}^I\sum_{j,l=1}^{I+1} \int_0^T\!\!\int_\Omega \partial_i \xi (\varphi^E(Z), Z_{I+1}) \partial_j \varphi_i^E (Z)A_{jl}(Z)\nabla Z_l\cdot \nabla \psi \, \dd x\dd t                               \\
		& \quad + \int_0^T\!\!\int_\Omega \partial_{I+1} \xi (\varphi^E(Z), Z_{I+1}) A_{I+1,I+1}(Z)\nabla Z_{I+1} \cdot \nabla \psi \, \dd x\dd t                               \\
		& \quad + \sum_{i=1}^{I}\sum_{j,k=1}^{I+1} \int_0^T\!\!\int_\Omega \psi \partial_i \xi(\varphi^E(Z), Z_{I+1}) \partial_j \partial_k \varphi_i^E (Z)A_{j,I+1}(Z)\nabla u \cdot \nabla Z_k\,  \dd x\dd t               \\
		& \quad - \sum_{i=1}^{I} \sum_{j=1}^I \int_0^T\!\!\int_\Omega \psi \partial_i \xi(\varphi^E(Z), Z_{I+1}) \partial_j \varphi_i^E (Z) R_j(Z) \,\dd x\dd t  \bigg|                                        \\
		& \hspace{4cm} \le C \| \psi \|_{L^\infty} \sup_{\tilde v \in \mathbb R^I} |D\xi(\tilde v)| \, \sum_{i=1}^{I} \| \mu_i^E \|_{\mathcal M([0,T) \times \ol \Omega)}.
		\end{align*}
		In view of~\eqref{eqlimitmuie}, the RHS converges to zero as
		$E \rightarrow \infty$. To establish the convergence of the LHS, we employ the
		pointwise a.e.\ convergence of $\varphi^E_i(Z)$ to $Z_i$, the boundedness of
		$D\varphi_i^E$ (cf. \ref{it:C5}), the compact support of
		$D\xi$, 
		the regularity $\nabla \sqrt{c_j}, \nabla u \in L^2((0,T) \times \Omega)$,
		Lemma~\ref{l:efl}
		and the uniform bounds \eqref{equnifbounds}. As in \cite{Fischer_2015_renormalized} and \cite{ChenJuengel_2019_renormalised}, we utilize the following auxiliary result: there exists a constant $E_0>0$ such that for all $E > E_0$ the relation $\sum_{i=1}^{I+1} Z_i \geq E_0$ implies $D \xi(\varphi^E(Z), Z_{I+1}) = D\xi(Z) = 0$ and $D^2\xi(\varphi^E(Z), Z_{I+1}) = D^2\xi(Z) = 0$. This result ensures that all terms involving derivatives of $\xi$ are zero if $\max_i Z_i$ is larger than $E_0$.
		
		This auxiliary result is easily proven by choosing $E_0>0$ such that $\supp D\xi \subset B_{E_0/\sqrt{I+1}}(0)$. For $E > E_0$ and $\sum_{i=1}^{I+1} Z_i  \geq E_0$, we then get $\sum_{i=1}^{I} \varphi_i^E(Z) + Z_{I+1} \geq E_0$ from \ref{it:C9}. 
		As a consequence, $Z, (\varphi^E(Z), Z_{I+1}) \notin B_{E_0/\sqrt{I+1}}(0)$ and the claim follows.
		
		As $T > 0$ was chosen arbitrarily, we have thus shown that $Z=(c,u)$ is a global renormalised solution. The weak formulation of the equation for $u$ already appeared in \eqref{equweaklimit}, while the conservation of the energy results from the energy conservation of $u^\varepsilon$ and the convergence $u^\varepsilon \rightarrow u$ in $L^2([0,T] \times \Omega)$. Finally, the bounds on the solution follow from Lemma \ref{lemmasubsequence} and \eqref{eqfinalbounds}.
	\end{proof}

	\appendix
	
	\section{Some auxiliary results}\label{sec:app}

	For the reader's convenience, we recall some classical results
	frequently used in this paper.
	
	\begin{lemma}[Gagliardo--Nirenberg inequality]
		\label{l:GN} 
		Let $1\le p<q$, define
		\begin{align}\label{eq:103}
		\theta=\frac{\frac{1}{p}-\frac{1}{q}}{\frac{1}{p}+\frac{1}{d}-\frac{1}{2}},
		\end{align}
		and suppose that $\theta\in(0,1)$.  There exists $C_1\in(0,\infty)$ such that
		for all $z\in H^1(\Omega)$
		\begin{align}\label{eq:106}
		\|z\|_{L^q(\Omega)}\le C_1\|\nabla z\|_{L^2(\Omega)}^\theta\|z\|_{L^p(\Omega)}^{1-\theta}+C_2\|z\|_{L^1(\Omega)}.
		\end{align}
	\end{lemma}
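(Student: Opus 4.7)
This is a classical statement, and the plan is to reduce it to the scale-invariant Gagliardo--Nirenberg inequality on $\mathbb{R}^d$, namely
\begin{equation*}
\|u\|_{L^q(\mathbb{R}^d)} \le C\|\nabla u\|_{L^2(\mathbb{R}^d)}^\theta \|u\|_{L^p(\mathbb{R}^d)}^{1-\theta}\quad\text{for all } u\in C^\infty_c(\mathbb{R}^d),
\end{equation*}
with exactly the exponent $\theta$ appearing in \eqref{eq:103}. This exponent is uniquely determined by requiring scale invariance under $u(x)\mapsto u(\lambda x)$, and the inequality itself follows from the Sobolev embedding $\dot H^1(\mathbb{R}^d)\hookrightarrow L^{2^*}(\mathbb{R}^d)$ (for $d\ge 3$), combined with H\"older interpolation of $L^q$ between $L^p$ and the critical exponent $L^{2^*}$; minor modifications using the one-dimensional fundamental theorem of calculus or the embedding $H^1\hookrightarrow L^r$ for every finite $r$ cover the cases $d=1,2$.

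To transfer the inequality from $\mathbb{R}^d$ to the bounded Lipschitz domain $\Omega$, I would split $z = (z-\bar z) + \bar z$, where $\bar z := |\Omega|^{-1}\int_\Omega z$ is the mean value, and handle the two pieces separately. A Stein-type extension operator (composed with a smooth compactly supported cutoff) produces $\tilde v = E(z-\bar z)\in H^1(\mathbb{R}^d)\cap L^p(\mathbb{R}^d)$ satisfying
\begin{equation*}
\|\tilde v\|_{L^p(\mathbb{R}^d)}\lesssim \|z\|_{L^p(\Omega)},\quad \|\nabla\tilde v\|_{L^2(\mathbb{R}^d)}\lesssim \|\nabla z\|_{L^2(\Omega)}+\|z-\bar z\|_{L^2(\Omega)},
\end{equation*}
and the Poincar\'e--Wirtinger inequality $\|z-\bar z\|_{L^2(\Omega)}\lesssim \|\nabla z\|_{L^2(\Omega)}$ collapses the second bound to $\|\nabla \tilde v\|_{L^2(\mathbb{R}^d)}\lesssim \|\nabla z\|_{L^2(\Omega)}$. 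Applying the $\mathbb{R}^d$-version of GN to $\tilde v$ and restricting back to $\Omega$ then yields $\|z-\bar z\|_{L^q(\Omega)}\le C_1\|\nabla z\|_{L^2(\Omega)}^\theta\|z\|_{L^p(\Omega)}^{1-\theta}$. The constant piece is handled trivially via $\|\bar z\|_{L^q(\Omega)}=|\bar z||\Omega|^{1/q}\le |\Omega|^{1/q-1}\|z\|_{L^1(\Omega)}$, and summing the two pieces via the triangle inequality gives \eqref{eq:106}.

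The only subtle point --- and what I would call the main obstacle --- is preventing a stray $\|z\|_{L^2(\Omega)}$ factor from appearing on the right-hand side: this is precisely what the Poincar\'e--Wirtinger step achieves, and it is crucial that the mean has been subtracted before the extension is applied. The same decomposition also explains the appearance of the additive $\|z\|_{L^1}$ term on a bounded domain, which is absent from the homogeneous $\mathbb{R}^d$ inequality. For a complete rigorous treatment on Lipschitz domains one may invoke Nirenberg's original paper or standard Sobolev-space monographs; the proof sketched above is in essence the classical argument.
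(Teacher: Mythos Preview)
The paper does not give a proof of this lemma at all: it is listed in the appendix among ``classical results frequently used in this paper'' and is simply recalled for the reader's convenience. Your sketch is a correct and standard derivation of the bounded-domain version from the homogeneous $\mathbb{R}^d$ inequality via Stein extension and Poincar\'e--Wirtinger, so there is nothing to compare and no gap to flag.
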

	
	The following lemma is a corollary of the Dunford--Pettis theorem if $p =
	1$. In fact, we only need the simpler version for $p\in(1,\infty)$, which
	follows from standard results on weak convergence.
	
	\begin{lemma}\label{l:weakConv}
		Let $p \in [1, \infty)$ and suppose that $f_j,f:\Omega\to\mathbb{R}$ are
		measurable functions and $g_j,g\in L^p(\Omega)$.  If $f_j\to f$ a.e.\ in
		$\Omega$, $\sup_j\|f_j\|_{L^\infty(\Omega)}<\infty$ and
		$g_j\rightharpoonup g$ in $L^p(\Omega)$, then
		\begin{align*}
		f_j g_j\rightharpoonup fg \quad\text{in } L^p(\Omega).
		\end{align*}
	\end{lemma}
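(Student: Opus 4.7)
The plan is to fix an arbitrary test function $\phi \in L^{p'}(\Omega)$ (or $\phi \in L^\infty(\Omega)$ in the case $p=1$) and to verify weak convergence by showing that $\int_\Omega f_j g_j \phi \,\dd x \to \int_\Omega fg\,\phi \,\dd x$. The standard device is to split
\begin{equation*}
	f_j g_j - fg \;=\; f(g_j - g) \;+\; (f_j - f)\,g_j
\end{equation*}
and to deal with the two summands separately. Note that $f \in L^\infty(\Omega)$ because it is the a.e.\ limit of a sequence uniformly bounded in $L^\infty(\Omega)$, so products of $f$ with $L^{p'}$-functions stay in $L^{p'}$.

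For the first summand, $f\phi \in L^{p'}(\Omega)$, so the assumed weak convergence $g_j \rightharpoonup g$ in $L^p(\Omega)$ yields $\int_\Omega f\,\phi\,(g_j - g)\,\dd x \to 0$ directly from the definition of weak convergence.

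For the second summand in the main case $p \in (1,\infty)$, the Banach--Steinhaus theorem applied to the weakly convergent sequence $(g_j)$ supplies a uniform bound $\sup_j \|g_j\|_{L^p} < \infty$. Combined with H\"older's inequality, this reduces the task to proving $(f_j - f)\phi \to 0$ in $L^{p'}(\Omega)$. The latter follows from Lebesgue's dominated convergence theorem, using the pointwise a.e.\ convergence $f_j \to f$ and the $L^{p'}$-integrable majorant $2\big(\sup_k \|f_k\|_{L^\infty}\big)|\phi|$.

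The main subtlety, consistent with the authors' remark that only the case $p \in (1,\infty)$ is needed in the paper, is the borderline case $p = 1$: here $L^{p'} = L^\infty$, and a dominating majorant in the dual space is not of much use. I would instead invoke the Dunford--Pettis theorem, which guarantees that a weakly convergent sequence in $L^1(\Omega)$ is equi-integrable. Because $\phi$ and $(f_j - f)$ are uniformly bounded, the family $\big((f_j - f)\,g_j\,\phi\big)_j$ inherits equi-integrability from $(g_j)$, and it converges to zero pointwise a.e.\ by the assumed a.e.\ convergence of $f_j$. Vitali's convergence theorem then gives $(f_j - f)\,g_j\,\phi \to 0$ in $L^1(\Omega)$, which completes the argument.
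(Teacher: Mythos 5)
The paper does not actually prove this lemma: it only remarks that the case $p=1$ is a corollary of the Dunford--Pettis theorem and that the case $p\in(1,\infty)$ (the only one it uses) follows from standard facts on weak convergence. Your write-up fills this in, and for $p\in(1,\infty)$ it is complete and correct: the decomposition $f_jg_j-fg=f(g_j-g)+(f_j-f)g_j$, Banach--Steinhaus for $\sup_j\|g_j\|_{L^p}<\infty$, and dominated convergence for $(f_j-f)\phi\to0$ in $L^{p'}(\Omega)$ are exactly the standard route, and the observation that $f\in L^\infty(\Omega)$ makes the first term unproblematic.

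For $p=1$ your overall strategy (Dunford--Pettis plus Vitali) is the one the paper points to, but one intermediate claim is false as stated: $(f_j-f)g_j\phi$ need \emph{not} converge to zero pointwise a.e., because the hypotheses give no pointwise control whatsoever on $g_j$. For instance, take $\Omega=(0,1)$, $\phi\equiv1$, $f\equiv0$, $f_j\equiv\varepsilon_j\downarrow0$, and $g_j=\varepsilon_j^{-1}\chi_{I_j}$ with $(I_j)_j$ the usual ``typewriter'' intervals and $\varepsilon_j$ chosen so that $|I_j|/\varepsilon_j\to0$; then $g_j\to0$ strongly in $L^1(\Omega)$ (so certainly weakly), yet $(f_j-f)g_j=\chi_{I_j}$ satisfies $\limsup_j\chi_{I_j}(x)=1$ at every point. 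The repair is minor: the product does converge to zero \emph{in measure}, since for any $M>0$ one has $\{|(f_j-f)g_j|>\lambda\}\subseteq\{|g_j|>M\}\cup\{|f_j-f|>\lambda/M\}$, where the first set has measure at most $\sup_k\|g_k\|_{L^1}/M$ (Chebyshev plus boundedness of the weakly convergent sequence) and the second tends to zero because $f_j\to f$ in measure on the finite measure space $\Omega$. Vitali's theorem in its convergence-in-measure form, together with the equi-integrability you correctly extracted from Dunford--Pettis, then closes the argument. Alternatively, you can avoid Vitali altogether by combining Egorov's theorem with equi-integrability: off a set of small measure $f_j\to f$ uniformly and $\sup_j\|g_j\|_{L^1}$ controls the integral, while on that small set equi-integrability of $(g_j)$ makes $\int|g_j|$ uniformly small.
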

	
	The next observation allows us to deal with coefficients in our system that 
	are singular near $u=0$, which may arise in the case of model~\HH.
	
	\begin{lemma}\label{l:1}
		Suppose that $a_j,V_j, j\in \mathbb{N},$ are measurable functions on a
		bounded domain $\Omega$ satisfying
		\begin{itemize}
			\item $V_j\rightharpoonup V$ in $L^1(\Omega)$
			\item $a_j\to a$ a.e.~in $\Omega$ (and $|a_j|\gtrsim 1$ for all $j$)
			\item $\sup_j\|a_jV_j\|_{L^{1+\epsilon}}<\infty$ for some $\epsilon>0$.
		\end{itemize}
		Then
		$a_jV_j\rightharpoonup aV\text{ in }L^{1+\epsilon}(\Omega).$
	\end{lemma}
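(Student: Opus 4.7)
The plan is to use reflexivity of $L^{1+\epsilon}(\Omega)$ together with the preceding Lemma~\ref{l:weakConv} to identify the weak limit of any convergent subsequence of $(a_jV_j)$, and then invoke the subsequence principle to conclude.

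First, I would use the uniform bound $\sup_j\|a_jV_j\|_{L^{1+\epsilon}}<\infty$: since $L^{1+\epsilon}(\Omega)$ is reflexive, there exist $w\in L^{1+\epsilon}(\Omega)$ and a subsequence (not relabelled) such that
\[
a_jV_j\rightharpoonup w \quad \text{in } L^{1+\epsilon}(\Omega).
\]
The goal is to show $w=aV$ almost everywhere; uniqueness of the weak $L^{1+\epsilon}$-limit along arbitrary subsequences then upgrades this to weak convergence of the full sequence.

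The key step is to identify $w$ by dividing out $a_j$. Because $|a_j|\gtrsim 1$ uniformly and $a_j\to a$ a.e.\ in $\Omega$, the reciprocals satisfy $\tfrac{1}{a_j}\to\tfrac{1}{a}$ a.e.\ in $\Omega$ with $\sup_j\|\tfrac{1}{a_j}\|_{L^\infty}<\infty$. Applying Lemma~\ref{l:weakConv} with $f_j=1/a_j$, $g_j=a_jV_j$, $p=1+\epsilon$, I obtain
\[
V_j = \tfrac{1}{a_j}(a_jV_j) \,\rightharpoonup\, \tfrac{w}{a} \quad\text{in } L^{1+\epsilon}(\Omega).
\]
Since $\Omega$ is bounded, $L^{1+\epsilon}\hookrightarrow L^1$ continuously and the embedding's adjoint $L^\infty\hookrightarrow L^{(1+\epsilon)/\epsilon}$ shows that weak $L^{1+\epsilon}$-convergence implies weak $L^1$-convergence. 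Combined with the hypothesis $V_j\rightharpoonup V$ in $L^1(\Omega)$ and uniqueness of weak $L^1$-limits, this yields $V=\tfrac{w}{a}$ a.e., i.e.\ $w=aV$ a.e.\ in $\Omega$.

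Finally, the same argument applied to an arbitrary subsequence of $(a_jV_j)$ shows that every such subsequence has a further subsequence weakly converging in $L^{1+\epsilon}(\Omega)$ to the same limit $aV$; a standard subsequence argument then gives $a_jV_j\rightharpoonup aV$ in $L^{1+\epsilon}(\Omega)$ along the whole sequence. There is no real obstacle in the argument beyond the bookkeeping above; the substantive point is simply that the uniform lower bound on $|a_j|$ makes $1/a_j$ an admissible $L^\infty$ multiplier to which Lemma~\ref{l:weakConv} applies, thereby enabling the identification of the weak limit.
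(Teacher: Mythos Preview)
Your proof is correct and follows essentially the same approach as the paper: extract a weakly convergent subsequence in $L^{1+\epsilon}$ by reflexivity, use the uniform lower bound $|a_j|\gtrsim 1$ to apply Lemma~\ref{l:weakConv} with $f_j=1/a_j$ and $g_j=a_jV_j$, identify the limit via uniqueness of weak $L^1$-limits, and conclude by the subsequence principle. You have merely made explicit the step (weak $L^{1+\epsilon}$-convergence implies weak $L^1$-convergence on a bounded domain) that the paper leaves implicit.
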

	We remark that the hypothesis $|a_j|\gtrsim 1$ can be removed by writing
	$a_jV_j=a_j(\chi_{\tilde\Omega_j}V_j)+(1-\chi_{\tilde\Omega_j})a_jV_j$, where
	$\tilde\Omega_j=\{a_j\ge1\}$ a.e.
	\begin{proof}
		By weak compactness, there exists $X\in L^{1+\epsilon}(\Omega)$ such that,
		along a subsequence, $a_jV_j\rightharpoonup X$ in $L^{1+\epsilon}(\Omega)$.
		Assuming $|a_j|\gtrsim 1$ a.e., we can invoke Lemma~\ref{l:weakConv} to
		deduce that $V_j\rightharpoonup\tfrac{1}{a}X$ in $L^{1+\epsilon}(\Omega)$ and
		hence $X=aV$. The conclusion now follows from the observation that this
		argument applies to any subsequence.
	\end{proof}
	
	\paragraph*{\bf Acknowledgements} 
	M.K. gratefully acknowledges the hospitality of WIAS Berlin, where a major part of the project was carried out. The research stay of M.K. at WIAS Berlin was funded by the Austrian Federal Ministry of Education, Science and Research through a research
	fellowship for graduates of a promotio sub auspiciis.
	The research of A.M. has been partially supported
	by Deutsche Forschungsgemeinschaft (DFG) through the Collaborative Research
	Center SFB 1114 ``\emph{Scaling Cascades in Complex Systems}'' (Project no.\
	235221301),  Subproject C05 ``Effective models for materials and
	interfaces with multiple scales''. J.F. and A.M. are grateful for the hospitality of the Erwin Schr\"odinger Institute in Vienna, where some ideas for this work have been developed. The authors are grateful to two anonymous referees for several helpful comments, in particular for the short proof of estimate \eqref{eq:ERE.Bz}.


\begin{thebibliography}{10}
		
		\bibitem{agh_2002_thermodynamic}
		G.~Albinus, H.~Gajewski, and R.~H\"{u}nlich.
		\newblock Thermodynamic design of energy models of semiconductor devices.
		\newblock {\em Nonlinearity}, 15(2):367--383, 2002.
		
		\bibitem{amann_1992}
		H.~Amann.
		\newblock Nonhomogeneous linear and quasilinear elliptic and parabolic boundary
		value problems.
		\newblock In {\em Function spaces, differential operators and nonlinear
			analysis ({F}riedrichroda, 1992)}, volume 133 of {\em Teubner-Texte Math.},
		pages 9--126. Teubner, Stuttgart, 1993.
		
		\bibitem{ArnoldMarkowichToscani_2000_LargeTimeDriftDiffusionPoisson}
		A.~Arnold, P.~Markowich, and G.~Toscani.
		\newblock On large time asymptotics for drift-diffusion-{P}oisson systems.
		\newblock {\em Transport Theory Statist. Phys.}, 29(3-5):571--581, 2000.
		\newblock Proc. of the {F}ifth {I}nternational {W}orkshop on {M}athematical
		{A}spects of {F}luid and {P}lasma {D}ynamics ({M}aui, {HI}, 1998).
		
		\bibitem{BBGGPV_1995}
		P.~B\'{e}nilan, L.~Boccardo, T.~Gallou\"{e}t, R.~Gariepy, M.~Pierre, and J.~L.
		V\'{a}zquez.
		\newblock An {$L^1$}-theory of existence and uniqueness of solutions of
		nonlinear elliptic equations.
		\newblock {\em Ann. Scuola Norm. Sup. Pisa Cl. Sci. (4)}, 22(2):241--273, 1995.
		
		\bibitem{BG_1992}
		L.~Boccardo and T.~Gallou\"{e}t.
		\newblock Nonlinear elliptic equations with right-hand side measures.
		\newblock {\em Comm. Partial Differential Equations}, 17(3-4):641--655, 1992.
		
		\bibitem{BM_1992}
		L.~Boccardo and F.~Murat.
		\newblock Almost everywhere convergence of the gradients of solutions to
		elliptic and parabolic equations.
		\newblock {\em Nonlinear Anal.}, 19(6):581--597, 1992.
		
		\bibitem{BDPS_2010}
		M.~Burger, M.~Di~Francesco, J.-F. Pietschmann, and B.~Schlake.
		\newblock Nonlinear cross-diffusion with size exclusion.
		\newblock {\em SIAM J. Math. Anal.}, 42(6):2842--2871, 2010.
		
		\bibitem{CanizoDesvillettesFellner_2014_ImprovedDualityEstimates}
		J.~A. Ca\~{n}izo, L.~Desvillettes, and K.~Fellner.
		\newblock Improved duality estimates and applications to reaction-diffusion
		equations.
		\newblock {\em Comm. Partial Differential Equations}, 39(6):1185--1204, 2014.
		
		\bibitem{CGV_2019}
		M.~C. Caputo, T.~Goudon, and A.~F. Vasseur.
		\newblock Solutions of the 4-species quadratic reaction-diffusion system are
		bounded and {$C^\infty$}-smooth, in any space dimension.
		\newblock {\em Anal. PDE}, 12(7):1773--1804, 2019.
		
		\bibitem{CJ_2004_strongcross}
		L.~Chen and A.~J\"{u}ngel.
		\newblock Analysis of a multidimensional parabolic population model with strong
		cross-diffusion.
		\newblock {\em SIAM J. Math. Anal.}, 36(1):301--322, 2004.
		
		\bibitem{CJ_2006_noselfdiff}
		L.~Chen and A.~J\"{u}ngel.
		\newblock Analysis of a parabolic cross-diffusion population model without
		self-diffusion.
		\newblock {\em J. Differential Equations}, 224(1):39--59, 2006.
		
		\bibitem{ChenDausJuengel_2018_global}
		X.~Chen, E.~S. Daus, and A.~J\"{u}ngel.
		\newblock Global existence analysis of cross-diffusion population systems for
		multiple species.
		\newblock {\em Arch. Ration. Mech. Anal.}, 227(2):715--747, 2018.
		
		\bibitem{ChenJuengel_2019_renormalised}
		X.~Chen and A.~J\"{u}ngel.
		\newblock Global renormalized solutions to reaction-cross-diffusion systems
		with self-diffusion.
		\newblock {\em J. Differential Equations}, 267(10):5901--5937, 2019.
		
		\bibitem{CJ_2019_wkstruni}
		X.~Chen and A.~J\"{u}ngel.
		\newblock Weak-strong uniqueness of renormalized solutions to
		reaction-cross-diffusion systems.
		\newblock {\em Math. Models Methods Appl. Sci.}, 29(2):237--270, 2019.
		
		\bibitem{CJL_2014}
		X.~Chen, A.~J\"{u}ngel, and J.-G. Liu.
		\newblock A note on {A}ubin-{L}ions-{D}ubinski\u{\i} lemmas.
		\newblock {\em Acta Appl. Math.}, 133:33--43, 2014.
		
		\bibitem{ChenLiu_2013_globalWeak}
		X.~Chen and J.-G. Liu.
		\newblock Global weak entropy solution to {D}oi-{S}aintillan-{S}helley model
		for active and passive rod-like and ellipsoidal particle suspensions.
		\newblock {\em J. Differential Equations}, 254(7):2764--2802, 2013.
		
		\bibitem{DalMaso_etal_1999_RenormalizedSolutionsMeasureData}
		G.~Dal~Maso, F.~Murat, L.~Orsina, and A.~Prignet.
		\newblock Renormalized solutions of elliptic equations with general measure
		data.
		\newblock {\em Ann. Scuola Norm. Sup. Pisa Cl. Sci. (4)}, 28(4):741--808, 1999.
		
		\bibitem{DGJ_1997}
		P.~Degond, S.~G\'{e}nieys, and A.~J\"{u}ngel.
		\newblock Symmetrization and entropy inequality for general diffusion
		equations.
		\newblock {\em C. R. Acad. Sci. Paris S\'{e}r. I Math.}, 325(9):963--968, 1997.
		
		\bibitem{DesvillettesFellner_2006_ExponentialDecayViaEntropy}
		L.~Desvillettes and K.~Fellner.
		\newblock Exponential decay toward equilibrium via entropy methods for
		reaction-diffusion equations.
		\newblock {\em J. Math. Anal. Appl.}, 319(1):157--176, 2006.
		
		\bibitem{DesvillettesFellner_2007_EntropyMethods}
		L.~Desvillettes and K.~Fellner.
		\newblock Entropy methods for reaction-diffusion systems.
		\newblock In {\em Discrete Contin. Dyn. Syst. (suppl). Dynamical Systems and
			Differential Equations. Proceedings of the 6th AIMS International
			Conference}, pages 304--312, 2007.
		
		\bibitem{DesvillettesFellner_2008_EntropyMethodsAPrioriBounds}
		L.~Desvillettes and K.~Fellner.
		\newblock Entropy methods for reaction-diffusion equations: slowly growing
		a-priori bounds.
		\newblock {\em Rev. Mat. Iberoam.}, 24(2):407--431, 2008.
		
		\bibitem{DesvillettesFellner_2015_DualityMethodsDegenerateDiffusion}
		L.~Desvillettes and K.~Fellner.
		\newblock Duality and entropy methods for reversible reaction-diffusion
		equations with degenerate diffusion.
		\newblock {\em Math. Methods Appl. Sci.}, 38(16):3432--3443, 2015.
		
		\bibitem{Desvillettes_etal_2007_GlobalExistQuadrSystems}
		L.~Desvillettes, K.~Fellner, M.~Pierre, and J.~Vovelle.
		\newblock Global existence for quadratic systems of reaction-diffusion.
		\newblock {\em Adv. Nonlinear Stud.}, 7(3):491--511, 2007.
		
		\bibitem{DesvillettesFellnerTang_2017_ComplexBalance}
		L.~Desvillettes, K.~Fellner, and B.~Q. Tang.
		\newblock Trend to equilibrium for reaction-diffusion systems arising from
		complex balanced chemical reaction networks.
		\newblock {\em SIAM J. Math. Anal.}, 49(4):2666--2709, 2017.
		
		\bibitem{DLM_2014_entropy}
		L.~Desvillettes, T.~Lepoutre, and A.~Moussa.
		\newblock Entropy, duality, and cross diffusion.
		\newblock {\em SIAM J. Math. Anal.}, 46(1):820--853, 2014.
		
		\bibitem{DiFrancescoFellnerMarkowich_2008_EntropyDissipationInhomogeneous}
		M.~Di~Francesco, K.~Fellner, and P.~A. Markowich.
		\newblock The entropy dissipation method for inhomogeneous reaction--diffusion
		systems.
		\newblock {\em Proc. Royal Soc. A}, 464:3272--3300, 2008.
		
		\bibitem{DiPernaLions_1988_FokkerPlanckBoltzmann}
		R.~J. DiPerna and P.-L. Lions.
		\newblock On the {F}okker-{P}lanck-{B}oltzmann equation.
		\newblock {\em Comm. Math. Phys.}, 120(1):1--23, 1988.
		
		\bibitem{DiPernaLions_1989_CauchyProblemBoltzmann}
		R.~J. DiPerna and P.-L. Lions.
		\newblock On the {C}auchy problem for {B}oltzmann equations: global existence
		and weak stability.
		\newblock {\em Ann. of Math. (2)}, 130(2):321--366, 1989.
		
		\bibitem{DiPernaLions_1989_ODETransportSobolevSpaces}
		R.~J. DiPerna and P.-L. Lions.
		\newblock Ordinary differential equations, transport theory and {S}obolev
		spaces.
		\newblock {\em Invent. Math.}, 98(3):511--547, 1989.
		
		\bibitem{Dreher_2008_population}
		M.~Dreher.
		\newblock Analysis of a population model with strong cross-diffusion in
		unbounded domains.
		\newblock {\em Proc. Roy. Soc. Edinburgh Sect. A}, 138(4):769--786, 2008.
		
		\bibitem{DreherJuengel_2012_compactFamilies}
		M.~Dreher and A.~J\"{u}ngel.
		\newblock Compact families of piecewise constant functions in {$L^p(0,T;B)$}.
		\newblock {\em Nonlinear Anal.}, 75(6):3072--3077, 2012.
		
		\bibitem{FellnerTang_2017_DetailBalance}
		K.~Fellner and B.~Q. Tang.
		\newblock Explicit exponential convergence to equilibrium for nonlinear
		reaction-diffusion systems with detailed balance condition.
		\newblock {\em Nonlinear Anal.}, 159:145--180, 2017.
		
		\bibitem{FellnerTang_2018_ConvergenceOfRenormalizedSolutions}
		K.~Fellner and B.~Q. Tang.
		\newblock Convergence to equilibrium of renormalised solutions to nonlinear
		chemical reaction-diffusion systems.
		\newblock {\em Z. Angew. Math. Phys.}, 69(3):Paper No. 54, 30, 2018.
		
		\bibitem{Fischer_2015_renormalized}
		J.~Fischer.
		\newblock Global existence of renormalized solutions to entropy-dissipating
		reaction-diffusion systems.
		\newblock {\em Arch. Ration. Mech. Anal.}, 218(1):553--587, 2015.
		
		\bibitem{Fischer_2017}
		J.~Fischer.
		\newblock Weak-strong uniqueness of solutions to entropy-dissipating
		reaction-diffusion equations.
		\newblock {\em Nonlinear Anal.}, 159:181--207, 2017.
		
		\bibitem{Gajewski_Groeger_1996_ReactionDiffusionElectrically}
		H.~Gajewski and K.~Gr\"{o}ger.
		\newblock Reaction-diffusion processes of electrically charged species.
		\newblock {\em Math. Nachr.}, 177:109--130, 1996.
		
		\bibitem{GGJ_2003}
		G.~Galiano, M.~L. Garz\'{o}n, and A.~J\"{u}ngel.
		\newblock Semi-discretization in time and numerical convergence of solutions of
		a nonlinear cross-diffusion population model.
		\newblock {\em Numer. Math.}, 93(4):655--673, 2003.
		
		\bibitem{GilbargTrudinger_book}
		D.~Gilbarg and N.~S. Trudinger.
		\newblock {\em Elliptic partial differential equations of second order}.
		\newblock Classics in Mathematics. Springer-Verlag, Berlin, 2001.
		\newblock Reprint of the 1998 edition.
		
		\bibitem{GlitzkyGroegerHuenlich_1996_FreeEnergyDissipationRate}
		A.~Glitzky, K.~Gr\"{o}ger, and R.~H\"{u}nlich.
		\newblock Free energy and dissipation rate for reaction diffusion processes of
		electrically charged species.
		\newblock {\em Appl. Anal.}, 60:201--217, 1996.
		
		\bibitem{GlitzkyHuenlich_1997_ElectroReactionDiffusion}
		A.~Glitzky and R.~H\"{u}nlich.
		\newblock Global estimates and asymptotics for electro-reaction-diffusion
		systems in heterostructures.
		\newblock {\em Appl. Anal.}, 66(3-4):205--226, 1997.
		
		\bibitem{hhmm_2018}
		J.~Haskovec, S.~Hittmeir, P.~Markowich, and A.~Mielke.
		\newblock Decay to equilibrium for energy-reaction-diffusion systems.
		\newblock {\em SIAM J. Math. Anal.}, 50(1):1037--1075, 2018.
		
		\bibitem{J_2000_thermo}
		A.~J\"{u}ngel.
		\newblock Regularity and uniqueness of solutions to a parabolic system in
		nonequilibrium thermodynamics.
		\newblock {\em Nonlinear Anal.}, 41(5-6, Ser. A: Theory Methods):669--688,
		2000.
		
		\bibitem{Juengel_2015_boundednes-by-entropy}
		A.~J\"{u}ngel.
		\newblock The boundedness-by-entropy method for cross-diffusion systems.
		\newblock {\em Nonlinearity}, 28(6):1963--2001, 2015.
		
		\bibitem{LM_2017_entropic}
		T.~Lepoutre and A.~Moussa.
		\newblock Entropic structure and duality for multiple species cross-diffusion
		systems.
		\newblock {\em Nonlinear Anal.}, 159:298--315, 2017.
		
		\bibitem{LY_1999_secondLaw}
		E.~H. Lieb and J.~Yngvason.
		\newblock The physics and mathematics of the second law of thermodynamics.
		\newblock {\em Phys. Rep.}, 310(1):96, 1999.
		
		\bibitem{Mielke_2011_generic}
		A.~Mielke.
		\newblock Formulation of thermoelastic dissipative material behavior using
		{GENERIC}.
		\newblock {\em Contin. Mech. Thermodyn.}, 23(3):233--256, 2011.
		
		\bibitem{Mielke_2011_gradientStructure}
		A.~Mielke.
		\newblock A gradient structure for reaction-diffusion systems and for
		energy-drift-diffusion systems.
		\newblock {\em Nonlinearity}, 24(4):1329--1346, 2011.
		
		\bibitem{Mielke_2013_thermomechanical}
		A.~Mielke.
		\newblock Thermomechanical modeling of energy-reaction-diffusion systems,
		including bulk-interface interactions.
		\newblock {\em Discrete Contin. Dyn. Syst. Ser. S}, 6(2):479--499, 2013.
		
		\bibitem{MielkeMittnenzweig_2018_convergence}
		A.~Mielke and M.~Mittnenzweig.
		\newblock Convergence to equilibrium in energy-reaction-diffusion systems using
		vector-valued functional inequalities.
		\newblock {\em J. Nonlinear Sci.}, 28(2):765--806, 2018.
		
		\bibitem{Pierre_2010_GlobalExistenceSurvey}
		M.~Pierre.
		\newblock Global existence in reaction-diffusion systems with control of mass:
		a survey.
		\newblock {\em Milan J. Math.}, 78(2):417--455, 2010.
		
		\bibitem{Roubicek_2013_npde}
		T.~Roub\'{\i}\v{c}ek.
		\newblock {\em Nonlinear partial differential equations with applications},
		volume 153 of {\em International Series of Numerical Mathematics}.
		\newblock Birkh\"{a}user/Springer Basel AG, Basel, second edition, 2013.
		
		\bibitem{Villani_1996_CauchyProblemLandauEquation}
		C.~Villani.
		\newblock On the {C}auchy problem for {L}andau equation: sequential stability,
		global existence.
		\newblock {\em Adv. Differential Equations}, 1(5):793--816, 1996.
		
	\end{thebibliography}

\end{document}